\theoremstyle{plain}
\newtheorem{theorem}{Theorem}
\newtheorem{lemma}[theorem]{Lemma}
\newtheorem{proposition}[theorem]{Proposition}
\newtheorem{corollary}[theorem]{Corollary}
\theoremstyle{definition}
\newtheorem{definition}[theorem]{Definition}
\newtheorem{question}{Question}
\newcommand{\cf}{\mathrm{cf}}
\newcommand{\cl}{\mathrm{cl}}
\newcommand{\ran}{\mathrm{ran}}
\newcommand{\dom}{\mathrm{dom}}
\newcommand{\ct}{{}^{<\omega}2}
\newcommand{\FR}{\mathrm{Cof}}
\newcommand{\FIN}{\mathrm{Fin}}
\newcommand{\Aa}{\mathcal{A}}
\newcommand{\BB}{\mathcal{B}}
\newcommand{\CC}{\mathcal{C}}
\newcommand{\DD}{\mathcal{D}}
\newcommand{\TT}{\mathcal{T}}
\newcommand{\FF}{\mathcal{F}}
\newcommand{\GG}{\mathcal{G}}
\newcommand{\XX}{\mathcal{X}}
\newcommand{\II}{\mathcal{I}}
\newcommand{\JJ}{\mathcal{J}}
\newcommand{\MM}{\mathcal{M}}
\newcommand{\UU}{\mathcal{U}}
\newcommand{\VV}{\mathcal{V}}
\newcommand{\PP}{\mathcal{P}}
\newcommand{\PPP}{\mathbb{P}}
\newcommand{\RRR}{\mathbb{R}}
\newcommand{\QQQ}{\mathbb{Q}}
\newcommand{\cccc}{\mathfrak{c}}
\newcommand{\pppp}{\mathfrak{p}}
\newcommand{\dddd}{\mathfrak{d}}
\begin{document}

\title{The topology of ultrafilters as subspaces of $2^\omega$}

\author{Andrea Medini}

\author{David Milovich}

\address{Math. Dept.\\
University of Wisconsin\\
480 Lincoln Dr.\\
Madison, WI 53706, USA}
\email{medini@math.wisc.edu}

\address{Texas A\&M International University\\ 
5201 University Blvd.\\ 
Laredo, TX 78041, USA}
\email{david.milovich@tamiu.edu}


\date{November 27, 2011}

\begin{abstract}
Using the property of being completely Baire, countable dense homogeneity and the perfect set property we will be able, under Martin's Axiom for countable posets, to distinguish non-principal ultrafilters on $\omega$ up to homeomorphism. Here, we identify ultrafilters with subpaces of $2^\omega$ in the obvious way. Using the same methods, still under Martin's Axiom for countable posets, we will construct a non-principal ultrafilter $\UU\subseteq 2^\omega$ such that $\UU^\omega$ is countable dense homogeneous. This consistently answers a question of Hru\v{s}\'ak and Zamora Avil\'es. Finally, we will give some partial results about the relation of such topological properties with the combinatorial property of being a $\mathrm{P}$-point.
\end{abstract}

\maketitle

By identifying a subset of $\omega$ with an element of the Cantor set $2^\omega$ in the obvious way (which we will freely do throughout the paper), it is possible to study the topological properties of any $\XX\subseteq\PP(\omega)$. We will focus on the case $\XX=\UU$, where $\UU$ is an ultrafilter on $\omega$. The case $\XX=\FF$, where $\FF$ is simply a filter on $\omega$, has been studied extensively (see Chapter 4 in \cite{bart}). From now on, all filters and ideals are implicitly assumed to be on $\omega$.

First, we will observe that there are many (actually, as many as possible) non-homeomorphic ultrafilters. However, the proof is based on a cardinality argument, hence it is not `honest' in the sense of Van Douwen: it would be desirable to find `quotable' topological properties that distinguish ultrafilters up to homeomorphism. This is consistently achieved in Section 3 using the property of being completely Baire (see Corollary $\ref{ncb}$ and Theorem $\ref{cb}$), in Section 4 using countable dense homogeneity (see Theorem $\ref{ncdh}$ and Theorem $\ref{cdh}$) and in Section 6 using the perfect set property (see Theorem $\ref{noclosedpsp}$ and Corollary $\ref{closedpsp}$).

In Section 5, we will adapt the proof of Theorem $\ref{cdh}$ to obtain the countable dense homogeneity of the $\omega$-power, consistently answering a question of Hru\v{s}\'ak and Zamora Avil\'es from \cite{hrusak} (see Corollary $\ref{consanswer}$).

In Section 7, using a modest large cardinal assumption, we will obtain a strong generalization of the main result of Section 6 (see Theorem $\ref{lrpspu}$).

Finally, in Section 8, we will investigate the relationship between the property of being a $\mathrm{P}$-point and the above topological properties; many questions on this front remain open.

\begin{proposition}\label{many} Let $\UU,\VV\subseteq 2^\omega$ be non-principal ultrafilters. Define $\UU\cong\VV$ if the topological spaces $\UU$ and $\VV$ are homeomorphic. Then the equivalence classes of $\cong$ have size $\cccc$. 
\end{proposition}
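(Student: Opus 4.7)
The plan is to prove both bounds separately: each $\cong$-equivalence class has size at most $\cccc$ and at least $\cccc$. For the upper bound, I would observe that any ultrafilter $\UU \subseteq 2^\omega$, being a subspace of the separable metric space $2^\omega$, is itself separable, so it admits a countable dense subset $D$. Any homeomorphism $h \colon \UU \to \VV \subseteq 2^\omega$ is a continuous map into a Hausdorff space and hence is determined by its restriction to $D$. Since there are at most $|2^\omega|^{|D|} = \cccc^{\aleph_0} = \cccc$ functions $D \to 2^\omega$, there are at most $\cccc$ continuous maps $\UU \to 2^\omega$, and in particular at most $\cccc$ possible images $\VV$.

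For the lower bound, I would exploit an almost disjoint family to produce $\cccc$ pairwise distinct ultrafilters homeomorphic to $\UU$. First pick any $B \in \UU$ with coinfinite complement, which exists because a non-principal ultrafilter properly extends the Fr\'echet filter. Next fix an almost disjoint family $\{A_\alpha : \alpha < \cccc\}$ of infinite subsets of $\omega$, each of which is automatically coinfinite. For each $\alpha$, choose a bijection $\pi_\alpha \colon \omega \to \omega$ with $\pi_\alpha[B] = A_\alpha$. The induced map $A \mapsto \pi_\alpha[A]$ on $2^\omega$ is a self-homeomorphism and a Boolean algebra isomorphism on $\PP(\omega)$, so it carries $\UU$ homeomorphically onto a non-principal ultrafilter $\VV_\alpha$ containing $A_\alpha$. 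The $\VV_\alpha$ are pairwise distinct: if $\VV_\alpha = \VV_\beta$ for some $\alpha \neq \beta$, then the finite set $A_\alpha \cap A_\beta$ would belong to the non-principal ultrafilter $\VV_\alpha$, which is absurd.

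The main idea is the almost disjoint family trick in the lower bound: it converts the abstract action of permutations of $\omega$ on the space of ultrafilters into a concrete source of distinctness via the pairwise-finite-intersection condition. Neither argument is particularly delicate, and neither invokes any of the finer topological machinery (completely Baire, countable dense homogeneity, perfect set property) developed later in the paper.
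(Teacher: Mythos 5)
Your proof is correct. The lower bound is exactly the paper's argument (the paper only sketches it: almost disjoint family plus permutation-induced homeomorphisms of $2^\omega$; your fleshed-out version, including the observation that $A_\alpha\cap A_\beta$ finite contradicts non-principality, is what is intended). For the upper bound you take a genuinely different and more elementary route: the paper invokes Lavrentiev's lemma to extend any homeomorphism $g:\UU\longrightarrow\VV$ to a homeomorphism between $G_\delta$ subsets of $2^\omega$ and then counts those extensions, whereas you simply note that a continuous map from a separable space into a Hausdorff space is determined by its restriction to a countable dense set, so there are at most $\cccc^{\aleph_0}=\cccc$ continuous maps $\UU\longrightarrow 2^\omega$ and hence at most $\cccc$ possible images. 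Your argument is cleaner for this particular counting purpose and proves the more general fact that any separable metrizable space has at most $\cccc$ homeomorphic (indeed, continuous) copies inside $2^\omega$; the paper's choice of Lavrentiev is presumably motivated by the fact that the same extension lemma is genuinely needed later (e.g., in Theorem \ref{ncdh}), where one must actually work with the extended homeomorphism rather than merely count. Both arguments are complete and correct.
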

\begin{proof}
To show that each equivalence class has size at least $\cccc$, simply use homeomorphisms of $2^\omega$ induced by permutations of $\omega$ and an almost disjoint family of subsets of $\omega$ of size $\cccc$ (see, for example, Lemma 9.21 in \cite{jech}).

By Lavrentiev's lemma (see Theorem 3.9 in \cite{kechris}), if $g:\UU\longrightarrow\VV$ is a homeomorphism, then there exists a homeomorphism $f:G\longrightarrow H$ that extends $g$, where $G$ and $H$ are $G_\delta$ subsets of $2^\omega$. Since there are only $\cccc$ such homeomorphisms, it follows that an equivalence class of $\cong$ has size at most $\cccc$.
\end{proof}

\begin{corollary}
There are $2^\cccc$ pairwise non-homeomorphic non-principal ultrafilters.
\end{corollary}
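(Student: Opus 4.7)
The plan is to combine Proposition \ref{many} with the classical Pospíšil–Hausdorff theorem that there are exactly $2^{\cccc}$ non-principal ultrafilters on $\omega$ (equivalently, $|\beta\omega\setminus\omega|=2^{\cccc}$; see, e.g., Theorem 3.5 in Chapter I of Comfort–Negrepontis or the standard construction using independent families of size $\cccc$).

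Given this, the corollary is a one-line cardinal arithmetic calculation. Let $\UU\subseteq 2^\omega$ range over non-principal ultrafilters, and partition this collection into the equivalence classes of the homeomorphism relation $\cong$ from Proposition \ref{many}. By Proposition \ref{many}, each class has cardinality $\cccc$. Hence if $\kappa$ denotes the number of equivalence classes, then the total number of non-principal ultrafilters equals $\kappa\cdot\cccc$. Since this total is $2^{\cccc}$, and since $\cccc<2^{\cccc}$, we must have $\kappa=2^{\cccc}$; a representative from each class then gives $2^{\cccc}$ pairwise non-homeomorphic non-principal ultrafilters.

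There is no real obstacle here. The only input beyond Proposition \ref{many} is the Pospíšil–Hausdorff cardinality, which is standard and quotable, so the proof reduces entirely to the cardinal identity $2^{\cccc}=\kappa\cdot\cccc$ forcing $\kappa=2^{\cccc}$.
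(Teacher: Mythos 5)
Your proof is correct and is exactly the argument the paper intends (the corollary is stated without proof, as an immediate consequence of Proposition \ref{many}): since each $\cong$-class has size $\cccc$ and there are $2^{\cccc}$ non-principal ultrafilters in total, the number of classes must be $2^{\cccc}$. Note that the essential input from Proposition \ref{many} is the \emph{upper} bound of $\cccc$ per class (via Lavrentiev's lemma), which your cardinal arithmetic uses correctly.
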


\section{Notation and Terminology}
Our main reference for descriptive set theory is \cite{kechris}. For other set-theoretic notions, see \cite{bart} or \cite{jech}. For notions that are related to large cardinals, see \cite{kanamori}. For all undefined topological notions, see \cite{engelking}.

By \emph{space} we mean separable metrizable topological space, with a unique exception in Section 6. For every $s\in\ct$, we will denote by $[s]$ the basic clopen set $\{x\in 2^\omega:s\subseteq x\}$.
Given a tree $T\subseteq\ct$, we will denote by $[T]$ the set of branches of $T$, that is $[T]=\{x\in 2^\omega: x\upharpoonright n\in T \textrm{ for all }n\in\omega\}$.

Given a function $f$ and $A\subseteq\dom(f)$, we will denote by $f[A]$ the image of $A$ under $f$, that is $f[A]=\{f(x):x\in A\}$.

A space $X$ is \emph{homogeneous} if whenever $x,y\in X$ there exists a homeomorphism $f:X\longrightarrow X$ such that $f(x)=y$.

Define the homeomorphism $c:2^\omega\longrightarrow 2^\omega$ by setting $c(x)(n)=1-x(n)$ for every $x\in 2^\omega$ and $n\in\omega$. Using $c$, one sees that every ultrafilter $\UU\subseteq 2^\omega$ is homeomorphic to its dual maximal ideal $\JJ=2^\omega\setminus\UU=c[\UU]$.

A \emph{perfect set} in a space $X$ is a non-empty closed subset $P$ of $X$ with no isolated points. Recall that $P$ is a perfect set in $2^\omega$ if and only if it is homeomorphic to $2^\omega$. A \emph{Bernstein set} is a subset $B$ of $X=2^\omega$ such that $B$ and $X\setminus B$ both intersect every perfect set in $X$. Given such a set $B$, since $2^\omega$ is homeomorphic to $2^\omega\times 2^\omega$, one actually has $|P\cap B|=\cccc$ and $|P\cap (X\setminus B)|=\cccc$ for every perfect set $P$ in $X$.

For every $x\subseteq\omega$, define $x^0=\omega\setminus x$ and $x^1=x$. Given a family $\Aa\subseteq\PP(\omega)$, a \emph{word} in $\Aa$ is an intersection of the form
$$
\bigcap_{x\in\tau}x^{w(x)}
$$
for some $\tau\in[\Aa]^{<\omega}$ and $w:\tau\longrightarrow 2$. Recall that $\Aa$ is an \emph{independent family} if every word in $\Aa$ is infinite.

A family $\FF\subseteq\PP(\omega)$ has the \emph{finite intersection property} if $\bigcap\sigma$ is infinite for all $\sigma\in [\FF]^{<\omega}$. Given such a family, we will denote by $\langle\FF\rangle$ the filter generated by $\FF$. Let $\FR$ be the collection of all cofinite subsets of $\omega$. Recall that an ultrafilter $\UU$ is non-principal if and only if $\FR\subseteq\UU$. In particular, every non-principal ultrafilter is dense in $2^\omega$. For any fixed $x\in 2^\omega$, define $x\!\uparrow=\{y\in 2^\omega: x\subseteq y\}$.

Whenever $x,y\in\PP(\omega)$, define $x\subseteq^\ast y$ if $x\setminus y$ is finite. Given $\CC\subseteq\PP(\omega)$, a \emph{pseudointersection} of $\CC$ is a subset $x$ of $\omega$ such that $x\subseteq^\ast y$ for all $y\in\CC$. Given a cardinal $\kappa$, a non-principal ultrafilter $\UU$ is a \emph{$\mathrm{P}_\kappa$-point} if every $\CC\in[\UU]^{<\kappa}$ has a pseudointersection in $\UU$. A \emph{$\mathrm{P}$-point} is simply a $\mathrm{P}_{\omega_1}$-point.

A family $\II\subseteq\PP(\omega)$ has the \emph{finite union property} if $\bigcup\sigma$ is coinfinite for all $\sigma\in [\II]^{<\omega}$. Given such a family, we will denote by $\langle\II\rangle$ the ideal generated by $\II$. Let $\FIN$ be the collection of all finite subsets of $\omega$. For any fixed $x\in 2^\omega$, define $x\!\downarrow=\{y\in 2^\omega: y\subseteq x\}$.

Given $\CC\subseteq\PP(\omega)$, a \emph{pseudounion} of $\CC$ is a subset $x$ of $\omega$ such that $y\subseteq^\ast x$ for all $y\in\CC$. A maximal ideal $\JJ$ is a \emph{$\mathrm{P}$-ideal} if $c[\JJ]$ is a $\mathrm{P}$-point.

\section{Basic properties}

In this section, we will notice that some topological properties are shared by all non-principal ultrafilters. It is easy to realize that every principal ultrafilter $\UU\subseteq 2^\omega$ is homeomorphic to $2^\omega$.

Since any maximal ideal $\JJ$ (actually, any ideal) is a topological subgroup of $2^\omega$ under the operation of symmetric difference (or equivalently, sum modulo $2$), every ultrafilter $\UU=c[\JJ]$ is also a topological group. In particular, every ultrafilter $\UU$ is a homogeneous topological space.

The following proposition is Lemma 3.1 in \cite{fitzpatrick1}.
\begin{proposition}[Fitzpatrick, Zhou]
Let $X$ be a homogeneous topological space. Then $X$ is a Baire space if and only if $X$ is not meager in itself.
\end{proposition}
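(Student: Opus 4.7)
The forward direction is immediate from the definition: if $X$ is a nonempty Baire space, then by the Baire category theorem $X$ itself cannot be written as a countable union of nowhere dense subsets, so $X$ is not meager in itself. (The empty case is degenerate and can be disposed of separately or folded into a nonemptiness convention on Baire spaces.)

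For the nontrivial converse, I plan to argue by contraposition: assuming $X$ is not Baire, I will cover $X$ by countably many meager open sets and conclude that $X$ is meager in itself. Since $X$ fails to be Baire, there is a nonempty open set $U\subseteq X$ that is meager in $X$, so $U=\bigcup_{n\in\omega} F_n$ with each $F_n$ nowhere dense. Fix any $u_0\in U$. Homogeneity gives, for every $x\in X$, a self-homeomorphism $h_x\colon X\to X$ with $h_x(u_0)=x$. Then $h_x[U]$ is an open neighborhood of $x$, and because homeomorphisms preserve nowhere denseness, $h_x[U]=\bigcup_n h_x[F_n]$ is meager in $X$.

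Thus $\{h_x[U]:x\in X\}$ is an open cover of $X$ by meager sets. Since every space in the paper is separable metrizable, $X$ is Lindel\"of, so a countable subcollection $\{h_{x_k}[U]:k\in\omega\}$ already covers $X$. A countable union of meager sets is meager, so $X$ is meager in itself, contradicting the hypothesis.

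The only place where anything beyond basic definitions is needed is the passage from an arbitrary open cover to a countable subcover, and this is handled at once by the paper's standing convention that spaces are separable and metrizable (hence second countable, hence Lindel\"of). I do not expect any real obstacle; the argument is essentially the standard observation that in a homogeneous space local and global Baire-category behavior coincide.
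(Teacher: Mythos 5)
Your proof is correct, but it takes a genuinely different route at the one non-trivial step. Both you and the paper start the same way: failure of the Baire property yields a non-empty meager open set $U$, and homogeneity spreads $U$ around to cover $X$ by meager open sets (the paper in fact notes that the non-empty meager open sets form a base). The divergence is in how one passes from ``$X$ is a union of meager open sets'' to ``$X$ is meager.'' You invoke the Lindel\"of property of separable metrizable spaces to extract a countable subcover and then use that a countable union of meager sets is meager; this is perfectly valid under the paper's standing convention that all spaces are separable metrizable, and it is arguably the most elementary argument available in that setting. The paper instead appeals to Banach's category theorem (the union of an \emph{arbitrary} family of meager open sets is meager), sketching its proof via a maximal pairwise disjoint subfamily $\CC$ of the meager open sets: the complement of $\bigcup\CC$ is closed nowhere dense, and the $n$-th nowhere dense pieces of the members of $\CC$ unite to a single nowhere dense set because the members of $\CC$ are disjoint open sets. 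What the paper's approach buys is full generality -- the proposition as stated concerns an arbitrary homogeneous topological space, with no countability hypothesis -- whereas your argument silently uses Lindel\"ofness and so would not cover, say, a non-Lindel\"of homogeneous space. For the application to ultrafilters in $2^\omega$ this makes no difference, but you should be aware that your proof establishes a formally weaker statement than the one quoted.
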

\begin{proof}
One implication is trivial. Now assume that $X$ is not a Baire space.
Since $X$ is homogeneous, it follows easily that
$$
\BB=\{U:U\textrm{ is a non-empty meager open set in }X\}
$$
is a base for $X$. So $X=\bigcup\BB$ is the union of a collection of meager open sets. Hence $X$ is meager by Banach's category theorem (see Theorem 16.1 in \cite{oxtoby}).

For the convenience of the reader, we sketch the proof in our particular case. Fix a maximal $\CC\subseteq\BB$ consisting of pairwise disjoint sets. Observe that $X\setminus \bigcup\CC$ is closed nowhere dense. For every $U\in\CC$, fix nowhere dense sets $N_n(U)$ such that $U=\bigcup_{n\in\omega}N_n(U)$. It is easy to check that $\bigcup_{U\in\CC}N_n(U)$ is nowhere dense in $X$ for every $n\in\omega$.
\end{proof}

Given any ultrafilter $\UU\subseteq 2^\omega$, notice that $c$ is a homeomorphism of $2^\omega$ such that $2^\omega$ is the disjoint union of $\UU$ and $c[\UU]$. In particular, $\UU$ must be non-meager and non-comeager in $2^\omega$ by Baire's category theorem. Actually, it follows easily from the 0-1 Law that no non-principal ultrafilter $\UU$ can have the property of Baire (see Theorem 8.47 in \cite{kechris}). In particular, no non-principal ultrafilter $\UU$ can be analytic (see Theorem 21.6 in \cite{kechris}) or co-analytic.

\begin{corollary}
Let $\UU\subseteq 2^\omega$ be an ultrafilter. Then $\UU$ is a Baire space.
\end{corollary}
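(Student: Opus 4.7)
The plan is to combine the Fitzpatrick--Zhou proposition with the remarks in the paragraph immediately preceding the corollary. Every ultrafilter $\UU$ is homogeneous (it is a coset of the topological group $\JJ = c[\UU]$, via the self-homeomorphism $c$), so Fitzpatrick--Zhou reduces the Baire property of $\UU$ to showing that $\UU$ is not meager in itself.

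The principal case is trivial: if $\UU$ is principal, generated by $\{n\}$, then $\UU = [\langle n \mapsto 1\rangle]$ is clopen in $2^\omega$, hence homeomorphic to $2^\omega$, which is Baire. So assume $\UU$ is non-principal. By the preceding paragraph, the 0-1 law implies that $\UU$ is not meager in $2^\omega$. I would finish by transferring this from $2^\omega$ down to $\UU$: if $\UU$ were meager in itself, say $\UU = \bigcup_{n \in \omega} N_n$ with each $N_n$ nowhere dense in $\UU$, then each closure $\overline{N_n}^{2^\omega}$ would be nowhere dense in $2^\omega$, which would make $\UU$ meager in $2^\omega$, a contradiction.

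The key step that needs a brief verification is precisely this transfer, which relies on $\UU$ being \emph{dense} in $2^\omega$ (noted in the Notation section because $\FR \subseteq \UU$). Indeed, suppose a basic clopen set $[s]$ is contained in $\overline{N_n}^{2^\omega}$ for some $s \in \ct$. Then $[s] \cap \UU$ is a nonempty (by density of $\UU$) open subset of $\UU$ contained in $\overline{N_n}^{2^\omega} \cap \UU = \overline{N_n}^{\UU}$, contradicting that $N_n$ is nowhere dense in $\UU$. So $\overline{N_n}^{2^\omega}$ has empty interior in $2^\omega$, completing the argument.

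I do not expect a real obstacle here, since Fitzpatrick--Zhou has already done the heavy lifting; the only mild subtlety is noticing that one needs density of $\UU$ in $2^\omega$ to pass nowhere-dense-in-$\UU$ sets up to nowhere-dense-in-$2^\omega$ sets, and this is exactly the content of the non-principal hypothesis.
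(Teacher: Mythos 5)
Your argument is correct and takes essentially the same route as the paper: homogeneity plus the Fitzpatrick--Zhou proposition reduce the claim to $\UU$ being non-meager in itself, which you transfer from non-meagerness of $\UU$ in $2^\omega$. Two small remarks: the paper obtains non-meagerness in $2^\omega$ uniformly for all ultrafilters from the decomposition $2^\omega=\UU\cup c[\UU]$ and Baire's category theorem (so neither your case split nor the appeal to the 0-1 law is needed), and the density of $\UU$ is not actually required in your transfer step, since a nonempty open $U\subseteq\cl(N_n)$ automatically meets $N_n\subseteq\UU$, so a set nowhere dense in a subspace is nowhere dense in the ambient space with no extra hypothesis.
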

\begin{proof}
If $\UU$ were meager in itself, then it would be meager in $2^\omega$, which is a contradiction.
\end{proof}

On the other hand, by Theorem 8.17 in \cite{kechris}, no non-principal ultrafilter can be a Choquet space (see Section 8.C in \cite{kechris}).

\section{Completely Baire ultrafilters}

\begin{definition}
A space $X$ is \emph{completely Baire} if every closed subspace of $X$ is a Baire space.
\end{definition}

For example, every Polish space is completely Baire. For co-analytic spaces, the converse is also true (see Corollary 21.21 in \cite{kechris}).

In the proof of Theorem $\ref{cb}$, we will need the following characterization (see Corollary 1.9.13 in \cite{vanmill1}). Observe that one implication is trivial.

\begin{lemma}[Hurewicz]\label{hurewicz} A space is completely Baire if and only if it does not contain any closed homeomorphic copy of $\QQQ$. 
\end{lemma}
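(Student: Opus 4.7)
The easy direction is immediate: $\QQQ$ is a countable union of nowhere-dense singletons and hence not Baire, so it cannot sit as a closed subspace of a completely Baire space.

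For the nontrivial direction, suppose $X$ is not completely Baire and fix a closed $F \subseteq X$ that fails to be Baire. Then $F$ contains a non-empty open subset $W$ that is meager in $F$. Let $Y$ be the closure of $W$ in $F$; one readily exhibits an increasing sequence $(D_n)_{n \in \omega}$ of closed nowhere-dense subsets of $Y$ with $Y = \bigcup_n D_n$ (the frontier $\overline{W} \setminus W$ is closed and nowhere dense in $Y$, and the rest comes from a witness to the meagerness of $W$). Thus $Y$ is closed in $X$ and meager in itself.

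By Sierpi\'nski's classical characterization of $\QQQ$ as the unique non-empty countable metrizable space without isolated points, it suffices to find a countable $Q \subseteq Y$ that is closed in $Y$ and has no isolated points. Fix a compatible metric $d$ on $Y$ and build recursively, for each $s \in \omega^{<\omega}$, a point $x_s \in Y$ and an open neighborhood $U_s$ of $x_s$ in $Y$ so that the closures $\overline{U_{s^\frown k}}$ for $k \in \omega$ are pairwise disjoint subsets of $U_s \setminus D_{|s|}$ with $d$-diameters tending to $0$, and so that $x_{s^\frown k} \to x_s$ as $k \to \infty$. Each stage is feasible because $D_{|s|}$ is nowhere dense in $Y$, so $U_s \setminus D_{|s|}$ is a dense open subset of $U_s$ in which the required points and shrinking pairwise disjoint neighborhoods can be selected.

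Set $Q = \{x_s : s \in \omega^{<\omega}\}$. Each $x_s$ is non-isolated in $Q$ by the convergence requirement, and $Q$ is countable. To see $Q$ is closed in $Y$, suppose $y \in Y$ is a limit of pairwise distinct $x_{s_n}$. A diagonal extraction, exploiting that the neighborhoods $U_{t^\frown k}$ cluster at $x_t$ for every $t$, yields one of two cases: either the $s_n$ eventually extend some common $t$ with the very next coordinate tending to infinity, forcing $y = x_t \in Q$; or there exists $\alpha \in \omega^\omega$ and a sub-subsequence along which $s_n$ extends $\alpha \upharpoonright (n+1)$, so that $y \in \bigcap_n \overline{U_{\alpha \upharpoonright (n+1)}}$. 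In the second case the construction makes this intersection disjoint from every $D_k$, hence from $Y = \bigcup_k D_k$, contradicting $y \in Y$. Thus $y \in Q$, and $Q$ is a closed copy of $\QQQ$ in $X$. The main obstacle is the last requirement: arranging, in a single coherent recursion, that every infinite branch of the tree escapes $Y$ in the completion of $d$. The meagerness of $Y$ in itself is precisely what enables this, via the inclusion $\overline{U_{s^\frown k}} \subseteq U_s \setminus D_{|s|}$ used at each stage.
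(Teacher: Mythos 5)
The paper does not actually prove this lemma: it cites Corollary 1.9.13 of van Mill's book and only remarks that one implication is trivial, so there is no in-paper argument to compare yours against. Your blind proof is correct and is essentially the standard argument for the nontrivial direction: pass from a non-Baire closed set $F$ to the closure $Y$ of a meager open piece, so that $Y$ is closed in $X$, non-empty, and an increasing union of closed nowhere dense sets $D_n$; then grow a tree of points and shrinking neighbourhoods avoiding the $D_n$'s level by level, and invoke Sierpi\'nski's characterization of $\QQQ$. Two small points should be made explicit. First, the recursion needs $Y$ to have no isolated points (otherwise one cannot pick distinct $x_{s^\frown k}$ converging to $x_s$); this is immediate because $Y$ is non-empty and meager in itself, so an isolated point would place a non-empty open subset of $Y$ inside some nowhere dense $D_n$. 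Second, your ``diagonal extraction'' is most cleanly organized via the subtree $S$ of all $t\in\omega^{<\omega}$ extended by infinitely many of the $s_n$: if $S$ has an infinite branch $\alpha$, then $y\in\bigcap_n\overline{U_{\alpha\upharpoonright(n+1)}}$, which by construction misses every $D_n$ and hence misses $Y$, a contradiction; if $S$ is well-founded, it has a terminal node $t$, the coordinates $s_n(|t|)$ must tend to infinity along a subsequence, and the shrinking diameters together with $x_{t^\frown k}\to x_t$ force $y=x_t\in Q$. With these details filled in, $Q$ is a countable metrizable space without isolated points that is closed in $Y$, hence in $X$, and the proof is complete.
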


The following (well-known) lemma is the first step in constructing an ultrafilter that is not completely Baire.

\begin{lemma}\label{perfectindependent} There exists a perfect subset $P$ of $2^\omega$ such that $P$ is an independent family.
\end{lemma}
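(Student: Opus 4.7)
The plan is to obtain $P$ as the image of a continuous injection $\Phi : 2^\omega \to 2^\omega$: any such image is compact (hence closed), uncountable, and has no isolated points, so it is automatically perfect. It is cleaner to identify the codomain with $\PP(E)$ for a suitable countable set $E$ and arrange $\Phi$ so that independence of the image falls out of the definition of $E$.

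I would take $E = \{(n,F) : n \in \omega,\ F \subseteq 2^n\}$, which is countable, and for each $x \in 2^\omega$ set
$$A_x = \{(n,F) \in E : x\upharpoonright n \in F\}.$$
Continuity of $\Phi : x \mapsto A_x$ is immediate, because for a fixed $(n,F)$ the condition $(n,F) \in A_x$ depends only on $x\upharpoonright n$, hence defines a clopen set. Injectivity is also easy: if $x \neq y$, pick $n$ with $x\upharpoonright n \neq y\upharpoonright n$ and take $F = \{x\upharpoonright n\}$; then $(n,F) \in A_x \setminus A_y$. So $\Phi$ is a continuous injection from compact $2^\omega$ into the Hausdorff space $\PP(E)$, and $P := \ran(\Phi)$ is a perfect subset of $\PP(E)$. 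A fixed bijection $E \cong \omega$ then transports $P$ to the desired perfect subset of $\PP(\omega) = 2^\omega$.

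The heart of the argument is verifying independence. Given distinct $x_1,\ldots,x_k$ in $P$'s pre-image and any $w : \{x_1,\ldots,x_k\} \to 2$, for every sufficiently large $n$ the restrictions $x_1\upharpoonright n,\ldots,x_k\upharpoonright n$ are pairwise distinct; putting $F_n = \{x_i\upharpoonright n : w(x_i) = 1\}$ we see at once that $x_i\upharpoonright n \in F_n$ iff $w(x_i) = 1$, so $(n,F_n) \in (A_{x_i})^{w(x_i)}$ for each $i$. Distinct values of $n$ yield distinct such pairs, so the word $\bigcap_i (A_{x_i})^{w(x_i)}$ is infinite, as required.

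The main (mild) obstacle is simply arranging that continuity, injectivity, and independence all hold simultaneously; this is precisely what the choice of $E$ accomplishes, by encoding at each level $n$ every possible Boolean combination on restrictions of length $n$. No recursive construction is needed, and no set-theoretic assumptions beyond \textsf{ZFC}\, are used.
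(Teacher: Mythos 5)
Your proof is correct and is essentially the paper's first proof of this lemma: the same index set $\{(n,F):n\in\omega,\ F\subseteq 2^n\}$, the same map $x\mapsto\{(n,F):x\upharpoonright n\in F\}$, continuity plus injectivity plus compactness giving perfectness, and the same choice of $F_n=\{x_i\upharpoonright n:w(x_i)=1\}$ for all large $n$ to witness independence. Nothing to add.
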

\begin{proof}
We will give three proofs. The first proof simply shows that the classical construction of an independent family of size $\cccc$ (see for example Lemma 7.7 in \cite{jech}) actually gives a perfect independent family. Define
$$
I=\{(\ell,F):\ell\in\omega,F\subseteq {}^{\ell}2\}.
$$
Since $I$ is a countably infinite set, we can identify $2^I$ and $2^\omega$. The desired independent family will be a collection of subsets of $I$. Consider the function $f:2^\omega\longrightarrow 2^I$ defined by
$$
f(x)=\{(\ell,F):x\upharpoonright\ell\in F\}.
$$
It is easy to check that $f$ is a continuous injection, hence a homeomorphic embedding by compactness. It follows that $P=\ran(f)$ is a perfect set. To check that $P$ is an independent family, fix $\tau\in [P]^{<\omega}$ and $w:\tau\longrightarrow 2$. Suppose that $\tau=f[\sigma]$, where $\sigma=\{ x_1,\ldots x_k\}$ and $x_1,\ldots ,x_k$ are distinct. Choose $\ell$ large enough so that 
$x_1\upharpoonright\ell,\ldots ,x_k\upharpoonright\ell$ are distinct. It follows that
$$
(\ell',\{x\upharpoonright\ell':x\in\sigma\textrm{ and }w(f(x))=1\})\in\bigcap_{y\in\tau}y^{w(y)}
$$
for every $\ell'\geq\ell$, which concludes the proof.

The second proof is also combinatorial. We will inductively construct $k_n\in\omega$ and a finite tree $T_n\subseteq {}^{<\omega}2$ for every $n\in\omega$ so that the following conditions are satisfied.
\begin{enumerate}
\item $k_m<k_n$ whenever $m<n<\omega$.
\item $T_m\subseteq T_n$ whenever $m\leq n<\omega$.
\item All maximal elements of $T_n$ have length $k_n$. We will use the notation $M_n=\{t\in T_n:\dom(t)=k_n\}$.
\item\label{perfect} For every $t\in T_n$ there exist two distinct elements of $T_{n+1}$ whose restriction to $k_n$ is $t$.
\item\label{independent} Given any $v:M_n\longrightarrow 2$, there exists $i\in k_{n+1}\setminus k_n$ such that $t(i)=v(t\upharpoonright k_n)$ for every $t\in M_{n+1}$.
\end{enumerate}
In the end, set $T=\bigcup_{n<\omega}T_n$ and $P=[T]$. Condition $(\ref{perfect})$ guarantees that $P$ is perfect. Next, we will verify that condition $(\ref{independent})$ guarantees that $P$ is an independent family. Fix $\tau\in[P]^{<\omega}$ and $w:\tau\longrightarrow 2$. For all sufficiently large $n\in\omega$, some $v\in{}^{M_n}2$ satisfies $v(x\upharpoonright k_n)=w(x)$ for all $x\in\tau$. By condition $(\ref{independent})$, there exists $i\in k_{n+1}\setminus k_n$ such that
$$
x(i)=(x\upharpoonright k_{n+1})(i)=v(x\upharpoonright k_n)=w(x)
$$
for all $x\in\tau$.

Start with $k_0=0$ and $T_0=\{\varnothing\}$. Given $k_n$ and $T_n$, define $k_{n+1}=k_n+2^{|M_n|}+1$. Fix an enumeration $\{v_j:j\in 2^{|M_n|}\}$ of all functions $v:M_n\longrightarrow 2$. Let $T_{n+1}$ consist of all initial segments of functions $t:k_{n+1}\longrightarrow 2$ such that $t\upharpoonright k_n\in M_n$ and $t(k_n+j)=v_j(t\upharpoonright k_n)$ for all $j<2^{|M_n|}$. Then, condition $(\ref{independent})$ is clearly satisfied. Since there is no restriction on $t(k_n+2^{|M_n|})$, condition $(\ref{perfect})$ is also satisfied. 

The third proof is topological. Fix an enumeration $\{(n_i,w_i):i\in\omega\}$ of all pairs $(n,w)$ such that $n\in\omega$ and $w:n\longrightarrow 2$. Define
$$
R_i=\biggl\{x\in (2^\omega)^{n_i}:\bigcap_{j\in n_i}x_j^{w_i(j)}\textrm{ is infinite}\biggr\}
$$
for every $i\in\omega$ and observe that each $R_i$ is comeager. 
By Exercise 8.8 and Theorem 19.1 in \cite{kechris}, there exists a comeager subset of the Vietoris hyperspace $K(2^\omega)$ consisting  of perfect sets $P\subseteq 2^\omega$ such that $\{x\in P^{n_i}:x_j\neq x_k\textrm{ whenever }j\neq k\}\subseteq R_i$ for every $i\in\omega$. It is trivial to check that any such $P$ is an independent family.
\end{proof}

We remark that, in some sense, the last two proofs that we have given of the above lemma are the same. The Vietoris hyperspace $K(2^\omega)$ is naturally homeomorphic to the space $X$ of pruned subtrees of ${}^{<\omega}2$ with basic open sets of the form $\{T\in X:T\cap {}^{<i}2=\tau\}$ for a fixed pruned subtree $\tau$ of ${}^{<i}2$. Moreover, the set $\{ T\in X:[T]\textrm{ is an independent family}\}$ is comeager in $X$ because the combinatorial proof's rule for constructing $T_{n+1}$ from $T_n$ only needs to be followed infinitely often.

The authors propose to call the following \emph{Kunen's closed embedding trick}.

\begin{theorem}[Kunen]\label{closed} Fix a zero-dimensional space $C$. There exists a non-principal ultrafilter $\UU\subseteq 2^\omega$ that contains a homeomorphic copy of $C$ as a closed subset.
\end{theorem}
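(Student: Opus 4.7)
The plan is to use the perfect independent family from Lemma~\ref{perfectindependent} as an ambient space into which $C$ can be topologically embedded, and then exploit independence to simultaneously force the image into an ultrafilter while excluding its limit points. Since $C$ is zero-dimensional separable metrizable, it embeds into $2^\omega$ (use a countable clopen base $\{B_n\}$ to define $f(x)(n)=1$ iff $x\in B_n$), and Lemma~\ref{perfectindependent} gives a perfect, hence homeomorphic to $2^\omega$, independent set $P\subseteq 2^\omega$. Composing, I fix an embedding $e\colon C\to P$ and set $D=e[C]$. Because $P$ is compact, and thus closed in $2^\omega$, the closure $\overline{D}$ taken in $2^\omega$ lies inside $P$, so $K=\overline{D}\setminus D$ is a subset of $P$ disjoint from $D$.

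Next I consider the family
$$
\FF_0 = D \cup \{\omega\setminus y : y\in K\} \cup \FR
$$
and verify the finite intersection property. An arbitrary finite $\sigma\subseteq\FF_0$ decomposes as $\sigma_1 \cup \{\omega\setminus y : y\in\tau_2\} \cup \sigma_3$ with $\sigma_1\in[D]^{<\omega}$, $\tau_2\in[K]^{<\omega}$, and $\sigma_3\in[\FR]^{<\omega}$. Defining $w\colon\sigma_1\cup\tau_2\to 2$ by $w=1$ on $\sigma_1$ and $w=0$ on $\tau_2$, the sets $\sigma_1$ and $\tau_2$ are disjoint finite subsets of the independent family $P$, so the corresponding word
$$
\bigcap_{x\in\sigma_1\cup\tau_2}x^{w(x)} = \bigcap\sigma_1 \cap \bigcap_{y\in\tau_2}(\omega\setminus y)
$$
is infinite by independence; intersecting with the cofinite set $\bigcap\sigma_3$ keeps it infinite. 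Hence $\FF_0$ generates a proper filter.

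I then extend this filter to an ultrafilter $\UU$. It is non-principal because $\FR\subseteq\UU$, contains $D$ by construction, and satisfies $K\cap\UU=\varnothing$ since $\omega\setminus y\in\UU$ for every $y\in K$. Consequently the closure of $D$ inside $\UU$ (viewed as a subspace of $2^\omega$) equals $\overline{D}\cap\UU = (D\cup K)\cap\UU = D$, so $D$ is a closed copy of $C$ in $\UU$. The only real insight is noticing that independence of $P$ is precisely the combinatorial hypothesis that lets one put $D$ into the ultrafilter and keep its boundary $K$ out of it simultaneously; once this is set up, everything else is a routine filter-extension argument, with no need to engage directly with the internal structure of $C$ beyond its embeddability into $2^\omega$.
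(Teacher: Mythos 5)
Your proof is correct and follows essentially the same route as the paper: embed $C$ into a perfect independent family $P$, put the image into the filter, and use independence to add the complements of enough points of $P$ to cut off the limit points. The only (immaterial) difference is that you exclude just $\overline{D}\setminus D$ where the paper excludes all of $P\setminus C$; both choices yield the finite intersection property and make the copy of $C$ closed in the resulting ultrafilter.
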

\begin{proof}
Fix $P$ as in Lemma $\ref{perfectindependent}$. Since $P$ is homeomorphic to $2^\omega$, we can assume that $C$ is a subspace of $P$. Observe that the family
$$
\GG=C\cup\{\omega\setminus x:x\in P\setminus C\}
$$
has the finite intersection property because $P$ is an independent family. Any non-principal ultrafilter $\UU\supseteq\GG$ will contain $C$ as a closed subset.
\end{proof}
\begin{corollary}\label{ncb}
There exists an ultrafilter $\UU\subseteq 2^\omega$ that is not completely Baire.
\end{corollary}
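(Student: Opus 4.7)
The plan is to combine Kunen's closed embedding trick (Theorem \ref{closed}) with Hurewicz's characterization of completely Baire spaces (Lemma \ref{hurewicz}). Specifically, I would take $C=\QQQ$, which is a separable metrizable zero-dimensional space (a basis of clopen sets is given by its intersection with intervals with irrational endpoints). Theorem \ref{closed} then yields a non-principal ultrafilter $\UU\subseteq 2^\omega$ that contains a homeomorphic copy of $\QQQ$ as a closed subset. By Lemma \ref{hurewicz}, the existence of a closed copy of $\QQQ$ in $\UU$ is precisely the obstruction to being completely Baire, so $\UU$ is not completely Baire.

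There is essentially no obstacle here beyond verifying that $\QQQ$ qualifies as input for Theorem \ref{closed}; all the real work was done in Lemma \ref{perfectindependent} (producing a perfect independent family) and Theorem \ref{closed} (using it to embed a prescribed zero-dimensional space as a closed subset of a non-principal ultrafilter). The proof will therefore be one short paragraph.
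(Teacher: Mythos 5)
Your proposal is correct and is exactly the paper's proof: apply Theorem \ref{closed} with $C=\QQQ$ and conclude via Lemma \ref{hurewicz}. Nothing further is needed.
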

\begin{proof}
Simply choose $C=\QQQ$.
\end{proof}

Since $2^\omega$ is homeomorphic to $2^\omega\times 2^\omega$, one can easily obtain the following strenghtening of Theorem $\ref{closed}$. Observe that, since any space has at most $\cccc$ closed subsets, the result cannot be improved.
\begin{theorem} Fix a collection $\CC$ of zero-dimensional spaces such that $|\CC|\leq\cccc$. There exists a non-principal ultrafilter $\UU\subseteq 2^\omega$ that contains a homeomorphic copy of $C$ as a closed subset for every $C\in\CC$.
\end{theorem}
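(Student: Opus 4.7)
The plan is to mimic the proof of Theorem \ref{closed} using a single perfect independent family $P\subseteq 2^\omega$, partitioned into $\cccc$ many pairwise disjoint closed ``slices,'' each of which hosts one embedded copy of a space from $\CC$. The key observation is that $P\cong 2^\omega\cong 2^\omega\times 2^\omega$ affords such a partition into copies of $2^\omega$.

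First, I fix a perfect independent family $P\subseteq 2^\omega$ via Lemma \ref{perfectindependent}, and through a homeomorphism $P\cong 2^\omega\times 2^\omega$ write $P=\bigsqcup_{x\in 2^\omega}P_x$, where $P_x$ corresponds to $\{x\}\times 2^\omega$. Each $P_x$ is closed in $2^\omega$ (as $P$ is closed) and homeomorphic to $2^\omega$. Since $|\CC|\leq\cccc$, I enumerate (with repetitions if needed) $\CC=\{C_x:x\in 2^\omega\}$; for each $x$, I fix an embedding of $C_x$ as a (not necessarily closed) subspace of $P_x$, which is possible because every zero-dimensional separable metrizable space embeds in $2^\omega$.

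Setting $D=\bigcup_{x\in 2^\omega}C_x\subseteq P$, I then consider the family
$$
\GG=D\cup\{\omega\setminus z:z\in P\setminus D\},
$$
the direct analogue of the family in the proof of Theorem \ref{closed}. Any finite subfamily of $\GG$ selects finitely many distinct elements of $P$, and its intersection is a word in $P$, hence infinite by independence; so $\GG\cup\FR$ has the finite intersection property, and I extend it to a non-principal ultrafilter $\UU$. Because $D\subseteq\UU$ while $P\setminus D\subseteq 2^\omega\setminus\UU$, we get $\UU\cap P=D$, and since the slices $P_y$ are pairwise disjoint with $C_y\subseteq P_y$, this gives $\UU\cap P_x=D\cap P_x=C_x$ for each $x$. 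As $P_x$ is closed in $2^\omega$, this exhibits $C_x$ as a closed subspace of $\UU$, as required.

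The main obstacle is conceptual rather than technical: recognizing that there is no need to build separate perfect independent families for separate spaces in $\CC$, because a single $P$ already supplies independence across all its points (regardless of which slice they belong to), so the FIP verification is identical to the one in Theorem \ref{closed}. The size bound $|\CC|\leq\cccc$ matches the natural ceiling coming from the fact that $P$ admits only $\cccc$-many such slices, and, as the paper notes, from the fact that any separable metrizable space has at most $\cccc$-many closed subsets.
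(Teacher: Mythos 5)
Your proposal is correct and is exactly the argument the paper intends: the paper gives no explicit proof beyond the remark that $2^\omega\cong 2^\omega\times 2^\omega$, and your slicing of a single perfect independent family $P$ into $\cccc$ pairwise disjoint closed copies of $2^\omega$, followed by the same $\GG$-construction and FIP verification as in Kunen's closed embedding trick, is precisely how that remark is meant to be cashed out.
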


The next theorem, together with Corollary $\ref{ncb}$, shows that under $\mathrm{MA(countable)}$ the property of being completely Baire is enough to distinguish ultrafilters up to homeomorphism.

\begin{theorem}\label{cb} Assume that $\mathrm{MA(countable)}$ holds. Then there exists a non-principal ultrafilter $\UU\subseteq 2^\omega$ that is completely Baire.
\end{theorem}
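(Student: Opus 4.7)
The plan is to construct $\UU$ by a transfinite recursion of length $\cccc$ and appeal to Lemma~\ref{hurewicz}. Since a copy of $\QQQ$ inside $2^\omega$ is exactly a nonempty countable subset $D$ that is dense-in-itself, and such a $D\subseteq\UU$ is closed in the subspace $\UU$ if and only if $(\overline{D}\setminus D)\cap\UU=\varnothing$ (closures computed in $2^\omega$), it suffices to arrange, for every nonempty countable dense-in-itself $D\subseteq 2^\omega$, that either $D\not\subseteq\UU$ or $(\overline{D}\setminus D)\cap\UU\neq\varnothing$. Note that any such $\overline{D}$ is automatically a perfect compact Polish space.

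I enumerate these $D$ as $\{D_\alpha:\alpha<\cccc\}$ and enumerate $2^\omega$ as $\{x_\alpha:\alpha<\cccc\}$, then build an increasing chain of filter bases $\FF_\alpha$ of size less than $\cccc$, starting with $\FF_0=\FR$ and taking unions at limit stages. At each successor stage $\alpha+1$ I first adjoin whichever of $x_\alpha$ or $\omega\setminus x_\alpha$ is compatible with $\FF_\alpha$, which eventually makes $\UU$ a non-principal ultrafilter. Then I handle $D_\alpha$ by cases. Case 1: some $d\in D_\alpha$ has $\omega\setminus d$ compatible with the current filter; I adjoin $\omega\setminus d$, forcing $d\notin\UU$. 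Case 2: for every $d\in D_\alpha$, $\omega\setminus d$ is incompatible with the current filter, which means that for every $d\in D_\alpha$ there exists $g\in\FF_\alpha$ with $g\subseteq^\ast d$; a short computation then shows every such $d$ is itself compatible with $\FF_\alpha$, and the task becomes to adjoin some $y\in\overline{D_\alpha}\setminus D_\alpha$ compatible with $\FF_\alpha$.

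The main obstacle is finding $y$ in Case 2, and this is where $\mathrm{MA(countable)}$ is used. For each $g$ in the generating set of $\FF_\alpha$, the set $\{y\in\overline{D_\alpha}:y\cap g\text{ is infinite}\}$ is $G_\delta$ in $\overline{D_\alpha}$ and contains the dense subset $D_\alpha$ (by the compatibility just verified), so it is a dense $G_\delta$ and in particular comeager in $\overline{D_\alpha}$; moreover $\overline{D_\alpha}\setminus D_\alpha$ is comeager in $\overline{D_\alpha}$ since $D_\alpha$ is countable. Applying $\mathrm{MA(countable)}$, equivalently $\mathrm{cov}(\MM)=\cccc$, to the perfect Polish space $\overline{D_\alpha}$, the intersection of these fewer than $\cccc$ comeager sets is nonempty and supplies the desired $y$. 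The resulting $\UU=\bigcup_\alpha\FF_\alpha$ is then a non-principal ultrafilter in which no $D_\alpha$ can be a closed copy of $\QQQ$, so Lemma~\ref{hurewicz} gives that $\UU$ is completely Baire.
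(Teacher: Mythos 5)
Your proposal is correct and follows essentially the same strategy as the paper: enumerate the countable dense-in-itself subsets of $2^\omega$ in type $\cccc$, build an increasing chain of filter bases of size less than $\cccc$, and at each stage either throw out a point of $D_\alpha$ or adjoin a point of $\overline{D_\alpha}\setminus D_\alpha$, so that Lemma~\ref{hurewicz} applies. The only real difference is in how the key step of Case 2 is carried out: the paper forces with the countable poset of finite initial segments of elements of $D_\alpha$ and explicit dense sets, whereas you apply the equivalent formulation $\mathrm{cov}(\MM)=\cccc$ directly to comeager subsets of the perfect Polish space $\overline{D_\alpha}$; these are interchangeable (the paper itself uses the $\mathrm{cov}(\MM)$ formulation elsewhere), and your version is arguably slicker. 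One small repair is needed: ensuring that $y\cap g$ is infinite for each single generator $g\in\FF_\alpha$ does not give the finite intersection property for $\FF_\alpha\cup\{y\}$; you must instead take, for each $\sigma\in[\FF_\alpha]^{<\omega}$, the $G_\delta$ set $\{y\in\overline{D_\alpha}:y\cap\bigcap\sigma\textrm{ is infinite}\}$, which still contains the dense set $D_\alpha$ by your Case 2 computation, and there are still fewer than $\cccc$ such sets, so the argument goes through unchanged.
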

\begin{proof}
Enumerate as $\{Q_\eta:\eta\in\cccc\}$ all subsets of $2^\omega$ that are homeomorphic to $\QQQ$. By Lemma $\ref{hurewicz}$, it will be sufficient to construct a non-principal ultrafilter $\UU$ such that no $Q_\eta$ is a closed subset of $\UU$.

We will construct $\FF_\xi$ for every $\xi\in\cccc$ by transfinite recursion. In the end, let $\UU$ be any ultrafilter extending $\bigcup_{\xi\in\cccc}\FF_\xi$. By induction, we will make sure that the following requirements are satisfied.
\begin{enumerate}
\item $\FF_\mu\subseteq\FF_\eta$ whenever $\mu\leq\eta<\cccc$.
\item $\FF_\xi$ has the finite intersection property for every $\xi\in\cccc$.
\item\label{smallultra} $|\FF_\xi|<\cccc$ for every $\xi\in\cccc$.
\item The potential closed copy of the rationals $Q_\eta$ is dealt with at stage $\xi=\eta+1$: that is, either  $\omega\setminus x\in\FF_\xi$  for some $x\in Q_\eta$ or there exists $x\in\FF_\xi$ such that $x\in\cl(Q_\eta)\setminus Q_\eta$.
\end{enumerate}

Start by letting $\FF_0=\FR$. Take unions at limit stages. At a successor stage $\xi=\eta+1$, assume that $\FF_\eta$ is given. First assume that there exists $x\in Q_\eta$ such that $\FF_\eta\cup\{\omega\setminus x\}$ has the finite intersection property. In this case, simply set $\FF_\xi=\FF_\eta\cup\{\omega\setminus x\}$.

Now assume that $\FF_\eta\cup\{\omega\setminus x\}$ does not have the finite intersection property for any $x\in Q_\eta$. It is easy to check that this implies $Q_\eta\subseteq\langle\FF_\eta\rangle$. Apply Lemma $\ref{keycb}$ with $\FF=\FF_\eta$ and $Q=Q_\eta$ to get $x\in\cl(Q_\eta)\setminus Q_\eta$ such that $\FF_\eta\cup\{x\}$ has the finite intersection property. Finally, set $\FF_\xi=\FF_\eta\cup\{x\}$.
\end{proof}

\begin{lemma}\label{keycb} Assume that $\mathrm{MA(countable)}$ holds. Let $\FF$ be a collection of subsets of $\omega$ with the finite intersection property such that $|\FF|<\cccc$. Let $Q$ be a non-empty subset of $2^\omega$ with no isolated points such that $Q\subseteq\langle\FF\rangle$ and $|Q|<\cccc$. Then there exists $x\in\cl(Q)\setminus Q$ such that $\FF\cup\{x\}$ has the finite intersection property.
\end{lemma}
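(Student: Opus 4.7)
My plan is to apply $\mathrm{MA(countable)}$ to a countable tree of finite approximations to $x$ sitting inside $\cl(Q)$. Set $T=\{s\in\ct:[s]\cap Q\neq\varnothing\}$, so that $[T]=\cl(Q)$. Since $Q$ has no isolated points, given any $s\in T$ and any $q\in[s]\cap Q$ there is $q'\in Q$ agreeing with $q$ up to some level $m\geq\dom(s)$ and differing at position $m$; hence every node of $T$ has two incomparable extensions in $T$, so $T$ is a countable pruned perfect tree. Take $\PP=T$ ordered by reverse extension.

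The key use of the hypothesis is the observation that every $q\in Q$ meets every finite intersection from $\FF$ infinitely often. Indeed, $Q\subseteq\langle\FF\rangle$ means $q\supseteq\bigcap\sigma_q$ for some $\sigma_q\in[\FF]^{<\omega}$, so for every $\sigma\in[\FF]^{<\omega}$,
$$
q\cap\bigcap\sigma\;\supseteq\;\bigcap(\sigma\cup\sigma_q),
$$
which is infinite by the finite intersection property of $\FF$.

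Consider now the dense subsets of $\PP$: first, $D^n=\{s\in T:\dom(s)\geq n\}$ for each $n\in\omega$; second, for each $\sigma\in[\FF]^{<\omega}$ and $k\in\omega$, the set $D_{\sigma,k}$ of those $s\in T$ for which at least $k$ indices $i\in\dom(s)$ satisfy $s(i)=1$ and $i\in\bigcap\sigma$; third, $E_q=\{s\in T:s\neq q\upharpoonright\dom(s)\}$ for each $q\in Q$. Density of $D^n$ follows from the pruning. Density of $D_{\sigma,k}$ uses the preceding observation: given $s\in T$, pick $q\in[s]\cap Q$ and $i_1<\cdots<i_k$ in $q\cap\bigcap\sigma$ above $\dom(s)$, and take $q\upharpoonright(i_k+1)\in T\cap D_{\sigma,k}$. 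Density of $E_q$ is immediate from perfectness of $T$: at most one of two incomparable extensions of $s$ can be an initial segment of $q$.

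The total number of dense sets is at most $\omega+|\FF|\cdot\omega+|Q|<\cccc$, so $\mathrm{MA(countable)}$ yields a filter $G\subseteq\PP$ meeting all of them; setting $x=\bigcup G$, meeting each $D^n$ ensures $\dom(x)=\omega$, hence $x$ is a branch of $T$ and lies in $\cl(Q)$; meeting each $E_q$ forces $x\notin Q$; and meeting each $D_{\sigma,k}$ makes $x\cap\bigcap\sigma$ infinite for every $\sigma\in[\FF]^{<\omega}$, i.e., $\FF\cup\{x\}$ has the finite intersection property. The one delicate point is the translation of $Q\subseteq\langle\FF\rangle$ into the infinite-intersection statement used to prove density of $D_{\sigma,k}$; without it the generic $x$ could easily land outside the filter generated by $\FF$.
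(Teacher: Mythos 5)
Your proof is correct and takes essentially the same approach as the paper: your tree $T$ is exactly the paper's poset of initial segments of elements of $Q$, and your dense sets $D^n$, $D_{\sigma,k}$, $E_q$ match the paper's $D_{\sigma,\ell}$ and $D_q$ (the paper merely folds the ``arbitrarily many hits'' requirement into $D_{\sigma,\ell}$ by demanding a witness above $\ell$). Your explicit unpacking of $Q\subseteq\langle\FF\rangle$ into the infinitude of $q\cap\bigcap\sigma$ is precisely the step the paper leaves to the reader.
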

\begin{proof}
Consider the countable poset 
$$
\PPP=\{s\in\ct:\textrm{there exist }q\in Q\textrm{ and }n\in\omega\textrm{ such that }s=q\upharpoonright n\},
$$
with the natural order given by reverse inclusion.

For every $\sigma=\{x_1,\ldots,x_k\}\in [\FF]^{<\omega}$ and $\ell\in\omega$, define
$$
D_{\sigma,\ell}=\{s\in\PPP:\textrm{there exists }i\in\dom(s)\setminus\ell \textrm{ such that }s(i)=x_1(i)=\cdots=x_k(i)=1\}.
$$
Using the fact that $Q\subseteq\langle\FF\rangle$, it is easy to see that each $D_{\sigma,\ell}$ is dense in $\PPP$.

For every $q\in Q$, define
$$
D_q=\{s\in\PPP:\textrm{there exists }i\in\dom(s)\textrm{ such that }s(i)\neq q(i)\}.
$$
Since $Q$ has no isolated points, each $D_q$ is dense in $\PPP$.

Since $|\FF|<\cccc$ and $|Q|<\cccc$, the collection of dense sets
$$
\DD=\{D_{\sigma,\ell}:\sigma\in [\FF]^{<\omega},\ell\in\omega\}\cup\{D_q:q\in Q\}
$$
has also size less than $\cccc$. Therefore, by $\mathrm{MA(countable)}$, there exists a $\DD$-generic filter $G\subseteq\PPP$. Let $x=\bigcup G\in 2^\omega$. The dense sets of the form $D_{\sigma,\ell}$ ensure that $\FF\cup\{x\}$ has the finite intersection property. The definition of $\PPP$ guarantees that $x\in\cl(Q)$. Finally, the dense sets of the form $D_q$ guarantee that $x\notin Q$.
\end{proof}

\begin{question} Can the assumption that $\mathrm{MA(countable)}$ holds be dropped in Theorem $\ref{cb}$?
\end{question}

\section{Countable dense homogeneity}

\begin{definition} A space $X$ is \emph{countable dense homogeneous} if for every pair $(D,E)$ of countable dense subsets of $X$ there exists a homeomorphism $f:X\longrightarrow X$ such that $f[D]=E$.
\end{definition}

We will start this section by consistently constructing an ultrafilter that is not countable dense homogeneous. We will use Sierpi\'{n}ski's technique for killing homeomorphisms (see \cite{vanmill4} or Appendix 2 of \cite{vandouwen} for a nice introduction). The key lemma is the following.

\begin{lemma}\label{independentncdh} Assume that $\mathrm{MA(countable)}$ holds. Let $\DD$ be a countable independent family that is dense in $2^\omega$. Fix $D_1$ and $D_2$ disjoint countable dense subsets of $\DD$. Then there exists $\Aa\subseteq 2^\omega$ satisfying the following requirements.
\begin{itemize}
\item $\Aa$ is an independent family.
\item $\DD\subseteq\Aa$.
\item If $G\supseteq\DD$ is a $G_\delta$ subset of $2^\omega$ and $f:G\longrightarrow G$ is a homeomorphism such that $f[D_1]=D_2$, then there exists $x\in G$ such that $\{x,\omega\setminus f(x)\}\subseteq\Aa$.
\end{itemize}
\end{lemma}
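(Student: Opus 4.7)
The plan is to build $\Aa$ by transfinite recursion of length $\cccc$, starting from $\Aa_0=\DD$. First, I enumerate as $\{(G_\eta,f_\eta):\eta<\cccc\}$ all pairs $(G,f)$ with $G\supseteq\DD$ a $G_\delta$ subset of $2^\omega$ and $f:G\longrightarrow G$ a homeomorphism satisfying $f[D_1]=D_2$; there are at most $\cccc$ such pairs since each $G$ is coded by a countable sequence of open sets and each $f$ is determined by its restriction to a countable dense subset of $G$. At stage $\eta+1$, given the current family $\Aa_\eta\supseteq\DD$ with $|\Aa_\eta|<\cccc$, I will produce $x_\eta\in G_\eta$ such that $\Aa_{\eta+1}:=\Aa_\eta\cup\{x_\eta,\omega\setminus f_\eta(x_\eta)\}$ is still independent. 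Take unions at limits and set $\Aa=\bigcup_{\eta<\cccc}\Aa_\eta$. All three conclusions of the lemma then follow: $\DD=\Aa_0\subseteq\Aa$; any word in $\Aa$ uses finitely many elements, all of which lie in some $\Aa_\eta$, so the word is infinite; and each pair $(G,f)$ is dealt with at its successor stage via $x_\eta$.

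The heart of the construction is the successor step, for which I propose to apply $\mathrm{MA(countable)}$ to the countable poset
$$
\PPP=\{(s,t)\in \ct\times \ct:[s]\cap f^{-1}[[t]]\cap G\neq\varnothing\}
$$
ordered by coordinatewise reverse inclusion, where $(G,f)=(G_\eta,f_\eta)$. Writing $G=\bigcap_n V_n$ with each $V_n$ open in $2^\omega$, the relevant dense sets fall into three groups: (i) $\{(s,t):[s]\subseteq V_n\}$ and $\{(s,t):[t]\subseteq V_n\}$ for each $n$, forcing both coordinates of the generic to lie in $G$; (ii) $\{(s,t):\dom(s),\dom(t)\geq n\}$ for each $n$, ensuring the generic yields full elements $x,y\in 2^\omega$; (iii) for each $\tau\in[\Aa_\eta]^{<\omega}$, $w\in {}^{\tau}2$, $w_1,w_2\in 2$, and $\ell\in\omega$,
$$
D_{\tau,w,w_1,w_2,\ell}=\Big\{(s,t)\in\PPP:\exists i\geq\ell,\ i\in\bigcap_{z\in\tau}z^{w(z)},\ s(i)=w_1,\ t(i)=1-w_2\Big\}.
$$
The total number of dense sets is less than $\cccc$, so $\mathrm{MA(countable)}$ supplies a generic filter. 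Standard arguments give $x,y\in G$, and continuity of $f$ combined with the existence of a witness in $[s]\cap f^{-1}[[t]]\cap G$ for every condition forces $y=f(x)$. The sets in (iii) then guarantee that $\bigcap_{z\in\tau}z^{w(z)}\cap x^{w_1}\cap(\omega\setminus f(x))^{w_2}$ is infinite for every choice of parameters, so $\Aa_\eta\cup\{x,\omega\setminus f(x)\}$ is independent, and $x_\eta:=x$ is the desired element.

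The step I expect to be the main obstacle is the density of the sets in (iii), which is precisely where the structural hypothesis on $f$ is used. Given $(s_0,t_0)\in\PPP$, the set $U=[s_0]\cap f^{-1}[[t_0]]\cap G$ is a nonempty relatively open subset of $G$, and $G$ has no isolated points because it is a dense $G_\delta$ in $2^\omega$. Since $D_1\subseteq\DD\subseteq G$ is dense in $G$, one can pick $y_0\in D_1\cap U$ avoiding the finite set $\tau\cup f^{-1}[\tau]$. Crucially, $y_0$ and $f(y_0)\in D_2$ are then distinct elements of $\DD\subseteq\Aa_\eta$ not lying in $\tau$, so
$$
\bigcap_{z\in\tau}z^{w(z)}\cap y_0^{w_1}\cap f(y_0)^{1-w_2}
$$
is a word in the independent family $\Aa_\eta$ and is therefore infinite. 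Choosing any $i$ in this intersection with $i\geq\ell$ and $i$ past the domains of $s_0$ and $t_0$, and setting $s=y_0\upharpoonright(i+1)$ and $t=f(y_0)\upharpoonright(i+1)$, produces an extension $(s,t)\leq(s_0,t_0)$ in $D_{\tau,w,w_1,w_2,\ell}$, with $y_0$ itself witnessing $(s,t)\in\PPP$. Without the hypothesis $f[D_1]=D_2\subseteq\DD$, there would be no mechanism to align both $y_0$ and $f(y_0)$ with the independence of $\Aa_\eta$, and the Sierpi\'nski-style killing would not go through.
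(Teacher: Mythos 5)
Your proof is correct, and its skeleton is exactly the paper's: a transfinite recursion of length $\cccc$ over all homeomorphisms $f_\eta:G_\eta\longrightarrow G_\eta$ with $f_\eta[D_1]=D_2$, adding the pair $\{x,\omega\setminus f_\eta(x)\}$ at stage $\eta+1$, with the key density/comeagerness facts witnessed by points of $D_1$ --- whose $f_\eta$-images land in $D_2$, so that a point and its image are two \emph{distinct} members of the independent family $\DD\subseteq\Aa_\eta$ not occurring in the given word. The only real difference is how $\mathrm{MA(countable)}$ enters at the successor step. The paper notes that for each word $w_\alpha$ in $\Aa_\eta$, each $n$ and each pair of exponents, the set $W_{\alpha,n,\varepsilon_1,\varepsilon_2}=\{x\in G_\eta:|w_\alpha\cap x^{\varepsilon_1}\cap f_\eta(x)^{\varepsilon_2}|\geq n\}$ is open and dense in $G_\eta$ (via $D_1\setminus(F\cup f_\eta^{-1}[F])$), hence comeager in $2^\omega$, and then picks $x$ in the intersection of these fewer-than-$\cccc$ comeager sets using the equivalence of $\mathrm{MA(countable)}$ with $\mathrm{cov}(\MM)=\cccc$. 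You instead apply $\mathrm{MA(countable)}$ literally to a countable poset of pairs of finite approximations to $(x,f_\eta(x))$; this obliges you to add the bookkeeping dense sets pinning both coordinates inside $G_\eta$ and the continuity argument identifying the second generic real with $f_\eta(x)$, all of which checks out, but it buys nothing over the comeagerness formulation. The mathematical content --- where the hypothesis $f[D_1]=D_2$ with $D_1,D_2$ disjoint dense subsets of $\DD$ is used --- is identical in the two versions.
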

\begin{proof}
Enumerate as $\{f_\eta:\eta\in\cccc\}$ all homeomorphisms
$$
f_\eta:G_\eta\longrightarrow G_\eta
$$
such that $f_\eta[D_1]=D_2$, where $G_\eta\supseteq\DD$ is a $G_\delta$ subset of $2^\omega$.

We will construct $\Aa_\xi$ for every $\xi\in\cccc$ by transfinite recursion. In the end, set $\Aa=\bigcup_{\xi\in\cccc}\Aa_\xi$. By induction, we will make sure that the following requirements are satisfied.
\begin{enumerate}
\item $\Aa_\mu\subseteq\Aa_\eta$ whenever $\mu\leq\eta<\cccc$.
\item $\Aa_\xi$ is an independent family for every $\xi\in\cccc$.
\item\label{smallsize} $|\Aa_\xi|<\cccc$ for every $\xi\in\cccc$.
\item The homeomorphism $f_\eta$ is dealt with at stage $\xi=\eta+1$: that is, there exists $x\in G_\eta$ such that $\{x,\omega\setminus f_\eta(x)\}\subseteq\Aa_\xi$.
\end{enumerate}

Start by letting $\Aa_0=\DD$. Take unions at limit stages. At a successor stage $\xi=\eta+1$, assume that $\Aa_\eta$ is given.

List as $\{w_\alpha:\alpha\in\kappa\}$ all the words in $\Aa_\eta$, where $\kappa=|\Aa_\eta|<\cccc$ by $(\ref{smallsize})$.
It is easy to check that, for any fixed $n\in\omega$, $\alpha\in\kappa$ and $\varepsilon_1, \varepsilon_2\in 2$, the set
$$
W_{\alpha,n,\varepsilon_1,\varepsilon_2}=\{x\in G_\eta: |w_\alpha\cap x^{\varepsilon_1}\cap f_\eta(x)^{\varepsilon_2}|\geq n\}
$$
is open in $G_\eta$. It is also dense, because $D_1\setminus(F\cup f_\eta^{-1}[F])\subseteq W_{\alpha,n,\varepsilon_1,\varepsilon_2}$, where $F$ consists of the finitely many elements of $\Aa_\eta$ that appear in $w_\alpha$. Therefore, each $W_{\alpha,n,\varepsilon_1,\varepsilon_2}$ is comeager in $2^\omega$. Recall that $\mathrm{MA(countable)}$ is equivalent to $\textrm{cov}(\MM)=\cccc$ (see Theorem 7.13 in \cite{blass} or Theorem 2.4.5 in \cite{bart}). It follows that the intersection
$$
W=\bigcap\{W_{\alpha,n,\varepsilon_1,\varepsilon_2}:\textrm{$n\in\omega$, $\alpha\in\kappa$ and $\varepsilon_1, \varepsilon_2\in 2$}\}
$$
is non-empty. Now simply pick $x\in W$ and set $\Aa_\xi=\Aa_\eta\cup\{x,\omega\setminus f_\eta(x)\}$.
\end{proof}

\begin{theorem}\label{ncdh} Assume that $\mathrm{MA(countable)}$ holds. Then there exists an ultrafilter $\UU\subseteq 2^\omega$ that is not countable dense homogeneous.
\end{theorem}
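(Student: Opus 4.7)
The plan is to apply Lemma \ref{independentncdh} inside Sierpi\'nski's ``killing homeomorphisms'' framework: extend the resulting independent family $\Aa$ to an ultrafilter $\UU$, and show that if some self-homeomorphism of $\UU$ sent $D_1$ onto $D_2$, then $\Aa$'s defining property would force the ultrafilter to contain both a set and its complement, a contradiction.

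First I would produce a countable dense independent family $\DD \subseteq 2^\omega$. Enumerate $\ct = \{t_n : n \in \omega\}$ and recursively pick $d_n \supseteq t_n$; once $d_0,\ldots,d_{n-1}$ are chosen, only the bits of $d_n$ past $|t_n|$ remain free, and to preserve independence with each of the finitely many new words involving $d_n$ we split the (already infinite) intersection $\bigcap_{d \in \sigma'} d^{w(d)}$ into two infinite pieces to decide $d_n$'s bits there. Split $\DD$ into disjoint countable dense sets $D_1, D_2$ and apply Lemma \ref{independentncdh} to obtain an independent family $\Aa \supseteq \DD$. Since $\Aa$ is independent it has the finite intersection property, so it extends to a non-principal ultrafilter $\UU$; the sets $D_1, D_2 \subseteq \DD \subseteq \UU$ are disjoint countable dense subsets of $\UU$ because $\DD$ is dense in $2^\omega$ and $\UU$ is too.

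Now suppose for contradiction that $\UU$ is countable dense homogeneous, witnessed by a homeomorphism $g : \UU \to \UU$ with $g[D_1] = D_2$. By Lavrentiev's lemma, $g$ extends to a homeomorphism $f : G \to H$ between $G_\delta$ subsets of $2^\omega$ containing $\UU$. To feed this into Lemma \ref{independentncdh} we need a self-homeomorphism of a single $G_\delta$ set, so set
\[
G_* \;=\; \{x \in G \cap H : f^n(x) \text{ is defined and lies in } G \cap H \text{ for every } n \in \mathbb{Z}\},
\]
where $f^n$ for $n < 0$ denotes iterates of $f^{-1}$. Each set $\{x : f^n(x) \in G \cap H\}$ is the preimage of a $G_\delta$ under a continuous function whose domain is a $G_\delta$, hence is itself $G_\delta$ in $2^\omega$; thus $G_*$ is $G_\delta$. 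By construction $f \upharpoonright G_* : G_* \to G_*$ is a homeomorphism, and $\UU \subseteq G_*$ because $g^n[\UU] = \UU$ for every $n \in \mathbb{Z}$.

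Applying Lemma \ref{independentncdh} to this restriction yields $x \in G_*$ with $\{x, \omega \setminus f(x)\} \subseteq \Aa \subseteq \UU$. But $x \in \UU$ combined with $f \supseteq g$ gives $f(x) = g(x) \in \UU$, contradicting $\omega \setminus f(x) \in \UU$. The one nontrivial step is the passage from Lavrentiev's $f : G \to H$ to a self-homeomorphism $f : G_* \to G_*$ so that Lemma \ref{independentncdh} applies; the construction of $\DD$ and the final assembly are routine.
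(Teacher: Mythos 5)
Your argument is correct and is essentially the paper's proof: fix $D_1,D_2,\Aa$ from Lemma \ref{independentncdh}, extend $\Aa$ to an ultrafilter, and derive a contradiction from a hypothetical homeomorphism sending $D_1$ to $D_2$ via a Lavrentiev extension. The only difference is that you spell out two steps the paper leaves implicit, namely the construction of the countable dense independent family $\DD$ and the passage from $f:G\longrightarrow H$ to a self-homeomorphism of a single $G_\delta$ set containing $\UU$ (the paper simply cites Exercise 3.10 in \cite{kechris} for the latter); your $G_*$ construction is the standard proof of that exercise and is correct.
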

\begin{proof}
Fix $D_1$, $D_2$ and $\Aa$ as in Lemma $\ref{independentncdh}$. Let $\UU\supseteq\Aa$ be any ultrafilter. Assume, in order to get a contradiction, that $\UU$ is countable dense homogeneous. Let $g:\UU\longrightarrow\UU$ be a homeomorphism such that $g[D_1]=D_2$. By Lavrentiev's lemma, it is possible to extend $g$ to a homeomorphism $f:G\longrightarrow G$, where $G$ is a $G_\delta$ subset of $2^\omega$ (see Exercise 3.10 in \cite{kechris}). By Lemma $\ref{independentncdh}$, there exists $x\in G$ such that $\{x,\omega\setminus f(x)\}\subseteq\Aa\subseteq\UU$, contradicting the fact that $f(x)=g(x)\in\UU$.
\end{proof}

\begin{question} Can the assumption that $\mathrm{MA(countable)}$ holds be dropped in Theorem $\ref{ncdh}$?
\end{question}

When first trying to prove Theorem $\ref{ncdh}$, we attempted to construct a non-principal ultrafilter $\UU$ such that no homeomorphism $g:\UU\longrightarrow\UU$ would be such that $g[\FR]\cap \FR=\varnothing$. This is easily seen to be impossible by choosing $g$ to be the multiplication by any coinfinite $x\in\UU$. Actually, something much stronger holds by the following result of Van Mill (see Proposition 3.4 in \cite{vanmill2}).
\begin{definition}[Van Mill]
A space $X$ has the \emph{separation property} if for every countable subset $A$ of $X$ and every meager subset $B$ of $X$ there exists a homeomorphism $f:X\longrightarrow X$ such that $f[A]\cap B=\varnothing$.
\end{definition}
\begin{proposition}[Van Mill] Let $G$ be a Baire topological group acting on space $X$ that is not meager in itself. Then, for all subsets $A$ and $B$ of $X$ with $A$ countable and $B$ meager, the set of elements $g\in G$ such that $gA\cap B=\varnothing$ is dense in $G$.
\end{proposition}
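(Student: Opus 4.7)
The plan is to show that the complement of the set in question, namely $\bigcup_{a\in A}\phi_a^{-1}(B)$ where $\phi_a:G\to X$ is the continuous orbit map $g\mapsto ga$, is meager in $G$; since $G$ is Baire, this yields density of the desired set. Because $A$ is countable and meager sets form a $\sigma$-ideal, it suffices to show that $\phi_a^{-1}(B)$ is meager in $G$ for each fixed $a\in A$. Writing $B\subseteq\bigcup_n F_n$ with each $F_n$ closed nowhere dense in $X$ and observing that $\phi_a^{-1}(F_n)$ is closed in $G$, the task reduces further to showing that $\phi_a^{-1}(F_n)$ has empty interior in $G$.

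I would prove this last point by contradiction. Suppose some non-empty open $U\subseteq G$ satisfies $Ua\subseteq F_n$. Picking $u_0\in U$ and replacing $U$ by $u_0^{-1}U$ and $F_n$ by $u_0^{-1}F_n$ (still closed nowhere dense, since $u_0^{-1}$ is a self-homeomorphism of $X$), I reduce to the case where $U$ is an open neighborhood of $1_G$ while still $Ua\subseteq F_n$. Next I would exploit separability of $G$ (every space in the paper is separable metrizable) to cover $G$ by countably many left translates of $U$: choose a countable dense $\{g_i:i<\omega\}\subseteq G$ and a symmetric open $V$ with $1_G\in V\subseteq U$, and note that $G=\bigcup_i g_iV\subseteq\bigcup_i g_iU$. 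Applying the action to $a$ gives $Ga\subseteq\bigcup_i g_i(Ua)\subseteq\bigcup_i g_iF_n$, a countable union of closed nowhere dense subsets of $X$ (each $g_i$ being a homeomorphism of $X$), so $Ga$ is meager in $X$.

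In the intended applications of this proposition the action of $G$ on $X$ is transitive---for instance $G$ acting on itself by translation, as in the ultrafilter context where $X=G=\UU$ acts via symmetric difference---so $Ga=X$, and we conclude that $X$ is meager in itself, contradicting the hypothesis. The main obstacle I foresee is the implicit role of transitivity: without $Ga=X$ one only concludes that $Ga$ is meager in $X$, which is not by itself a contradiction. Either transitivity is tacitly part of the formulation, or the final step has to be replaced by a more refined argument showing that the orbit $Ga$, equipped with its intrinsic homogeneous structure inherited from $G$, cannot be meager in itself while being covered by countably many closed sets that are nowhere dense in $X$ (and hence, by transfer along the orbit map, nowhere dense in $Ga$).
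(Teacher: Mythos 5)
The paper does not actually prove this proposition --- it is quoted from van Mill's paper (Proposition 3.4 of the cited Trans.\ Amer.\ Math.\ Soc.\ article) --- so there is no internal proof to compare yours against. On its own terms your argument is the standard one and is correct: the reduction to showing that each $\phi_a^{-1}(F_n)$ is closed with empty interior is right, and the covering of $G$ by countably many translates $g_iU$ is legitimate under the paper's standing convention that spaces are separable metrizable (Lindel\"ofness of $G$ is all you need there). Your worry about transitivity is well founded, but it is a defect of the statement as quoted rather than of your proof: without some hypothesis forcing the orbit $Ga$ to be non-meager in $X$, the conclusion is simply false. For example, let $G=\mathbb{Z}$ with the discrete topology (a Baire topological group) act on $X=\mathbb{R}$ by translation and take $A=B=\mathbb{Z}$; then $gA\cap B=\mathbb{Z}\neq\varnothing$ for every $g$, so the set in question is empty, even though $A$ is countable, $B$ is meager and $X$ is not meager in itself. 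In van Mill's original formulation the action is transitive, and in every application in this paper $G$ is an ultrafilter acting on itself by translation, so $Ga=X$ and your final contradiction goes through exactly as written. Your closing suggestion --- that one might instead argue intrinsically on the orbit when the action is not transitive --- cannot succeed in general, as the same $\mathbb{Z}$-on-$\mathbb{R}$ example shows (there every orbit is closed discrete, hence genuinely meager in $X$); the honest fix is to read transitivity, or at least non-meagerness of each orbit, into the hypotheses.
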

\begin{corollary}\label{bairegroup} Every Baire topological group has the separation property.
\end{corollary}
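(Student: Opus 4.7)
The plan is to apply Van Mill's proposition directly, taking the space $X$ to be the group $G$ itself, with $G$ acting on $X=G$ by left translation. First I would observe that this is indeed a continuous action of $G$ on a topological space: for each $g\in G$ the map $x\mapsto gx$ is a homeomorphism of $G$ onto itself, and the multiplication map $G\times G\longrightarrow G$ is continuous by the definition of a topological group.

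The second hypothesis of the proposition requires that $X=G$ be not meager in itself. This is automatic from the Baire assumption: if $G$ were empty the separation property would hold vacuously, while if $G$ is a non-empty Baire space and we had $G=\bigcup_{n\in\omega}N_n$ with each $N_n$ nowhere dense, then the closed sets $\overline{N_n}$ would also be nowhere dense, so the open sets $G\setminus\overline{N_n}$ would be dense with empty intersection, contradicting the Baire property. (This is also a special case of the Fitzpatrick--Zhou proposition, since topological groups are homogeneous.)

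Now given any countable $A\subseteq G$ and meager $B\subseteq G$, the proposition yields that $\{g\in G:gA\cap B=\varnothing\}$ is dense in $G$, hence non-empty. Choosing any such $g$ and defining $f:G\longrightarrow G$ by $f(x)=gx$ produces a homeomorphism of $G$ with $f[A]\cap B=\varnothing$, which is precisely the separation property. I do not anticipate any real obstacle here: the corollary is essentially a direct specialization of Van Mill's proposition to the regular left action of $G$ on itself, with the only mild check being that ``Baire'' implies ``not meager in itself,'' which is immediate for non-empty spaces.
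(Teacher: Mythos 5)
Your proposal is correct and is exactly the intended argument: the paper states this corollary without proof as an immediate specialization of Van Mill's proposition to the left translation action of $G$ on itself, which is what you carry out. The only detail worth checking --- that a non-empty Baire space is not meager in itself --- is handled correctly in your write-up.
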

\begin{corollary} Every ultrafilter $\UU\subseteq 2^\omega$ has the separation property.
\end{corollary}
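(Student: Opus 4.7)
The plan is to derive this directly from Corollary~\ref{bairegroup}, so there is really very little to do beyond assembling facts already stated in the excerpt. The one thing to check is that $\UU$, viewed as a space, is simultaneously a \emph{Baire topological group} in its own right, so that Corollary~\ref{bairegroup} applies to $\UU$ itself and not just to the ideal $\JJ = c[\UU]$.

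First I would recall that the dual maximal ideal $\JJ = 2^\omega\setminus\UU$ is a topological subgroup of $(2^\omega, \triangle)$, so $\JJ$ carries a Polish-style topological group structure. Since $c:2^\omega\to 2^\omega$ is a homeomorphism with $c[\JJ]=\UU$, we may transport this group operation along $c$ to obtain a topological group structure on $\UU$; this is exactly the structure the paper invokes when saying that every ultrafilter is a topological group. Next, the Corollary in Section~2 (applied right after the Fitzpatrick--Zhou proposition) already tells us that $\UU$ is a Baire space. Hence $\UU$ is a Baire topological group.

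Now Corollary~\ref{bairegroup} yields immediately that $\UU$ has the separation property: given countable $A\subseteq\UU$ and meager $B\subseteq\UU$, the proposition of Van~Mill (taking $G=X=\UU$ acting on itself by translation) gives a dense set of group elements $g\in\UU$ with $g\cdot A\cap B=\varnothing$; each such $g$ induces a self-homeomorphism of $\UU$ (left translation) witnessing the separation property for the pair $(A,B)$.

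There is no real obstacle here — the only mild point worth flagging is the distinction between the group structure on $\JJ$ and the transported one on $\UU$, but since $c$ is a homeomorphism this is purely cosmetic. All nontrivial content (the topological group structure, the Baire property of $\UU$, and the Van~Mill proposition) has already been established earlier in the excerpt.
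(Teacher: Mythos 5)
Your argument is exactly the paper's intended one: the corollary is stated without proof precisely because it follows immediately from Corollary~\ref{bairegroup} together with the facts, established in Section~2, that every ultrafilter is a topological group (via the symmetric-difference structure on $\JJ$ transported along $c$) and a Baire space. Your write-up correctly assembles these ingredients, and the point you flag about transporting the group structure from $\JJ$ to $\UU$ is indeed the only (cosmetic) detail to check.
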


It is easy to see that, for Baire spaces, being countable dense homogeneous is stronger than having the separation property. On the other hand, the product of $2^\omega$ and the one-dimensional sphere $S^1$ is a compact topological group that has the separation property but is not countable dense homogeneous (see Corollary 3.6 and Remark 3.7 in \cite{vanmill2}). Theorem $\ref{ncdh}$ consistently gives a zero-dimensional topological group with the same feature. Notice that such an example cannot be compact (or even Polish) by the following paragraph.

Recall that a space $X$ is \emph{strongly locally homogeneous} if it admits an open base $\BB$ such that whenever $U\in\BB$ and $x,y\in U$ there exists a homeomorphism $f:X\longrightarrow X$ such that $f(x)=y$ and $f\upharpoonright X\setminus U$ is the identity. For example, any homogeneous zero-dimensional space is strongly locally homogeneous. For Polish spaces, strong local homogeneity implies countable dense homogeneity (see Theorem 5.2 in \cite{anderson}). In \cite{vanmill3}, Van Mill constructed a homogeneous Baire space that is strongly locally homogeneous but not countable dense homogeneous. Actually, his example does not even have the separation property (see Theorem 3.5 in \cite{vanmill3}), so it cannot be a topological group by Corollary $\ref{bairegroup}$. In this sense, our example from Theorem $\ref{ncdh}$ is better than his. On the other hand, his example is constructed in ZFC, while ours needs $\mathrm{MA(countable)}$. Furthermore, his example can be easily modified to have any given dimension (see Remark 4.1 in \cite{vanmill3}).

Next, we will construct (still under $\mathrm{MA(countable)}$) a non-principal ultrafilter that is countable dense homogeneous. In \cite{baldwin}, Baldwin and Beaudoin used $\mathrm{MA(countable)}$ to construct a homogeneous Bernstein subset of $2^\omega$ that is countable dense homogeneous. Both examples give a consistent answer to Question 389 in \cite{fitzpatrick2}, which asks whether there exists a countable dense homogeneous space that is not completely metrizable. In \cite{farah}, using metamathematical methods, Farah, Hru\v{s}\'ak and Mart\'inez Ranero showed that the answer to such question is `yes' in ZFC.

The following lemma will be one of the key ingredients. The other key ingredient is the poset used in the proof of Lemma $\ref{keycdh}$, which was inspired by the poset used in the proof of Lemma 3.1 in \cite{baldwin}.

\begin{lemma}\label{davelemma}
Let $f:2^\omega\longrightarrow 2^\omega$ be a homeomorphism. Fix a non-principal maximal ideal $\JJ\subseteq 2^\omega$ and a countable dense subset $D$ of $\JJ$. Then $f$ restricts to a homeomorphism of $\JJ$ if and only if $\cl(\{d+f(d):d\in D\})\subseteq\JJ$.
\end{lemma}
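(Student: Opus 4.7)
The plan is to introduce the continuous auxiliary map $g:2^\omega\to 2^\omega$ given by $g(x)=x+f(x)$, where $+$ denotes symmetric difference, and to transfer information between $D$ and all of $\JJ$ via density on one side and compactness on the other. The key algebraic observation is that $\JJ$ is an index-two subgroup of $2^\omega$ whose cosets are exactly $\JJ$ and $\UU=c[\JJ]$, so for any $x\in 2^\omega$ one has $g(x)\in\JJ$ if and only if $x$ and $f(x)$ lie in the same coset of $\JJ$.

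For the forward direction, assume that $f$ restricts to a homeomorphism of $\JJ$. Since $f$ is a bijection of $2^\omega=\JJ\cup\UU$, the equality $f[\JJ]=\JJ$ forces $f[\UU]=\UU$, so $x$ and $f(x)$ always lie in the same coset, whence $g[2^\omega]\subseteq\JJ$. Continuity of $g$ and compactness of $2^\omega$ make $g[2^\omega]$ a closed subset of $2^\omega$, so
\[
\cl(\{d+f(d):d\in D\})=\cl(g[D])\subseteq g[2^\omega]\subseteq\JJ.
\]

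For the backward direction, assume that $\cl(\{d+f(d):d\in D\})\subseteq\JJ$. Given $x\in\JJ$, the density of $D$ in $\JJ$ provides a sequence $(d_n)$ in $D$ converging to $x$ in $2^\omega$, and continuity of $f$ and of $+$ yields $d_n+f(d_n)\to x+f(x)=g(x)$; hence $g(x)\in\cl(g[D])\subseteq\JJ$. Since $\JJ$ is a subgroup and $x\in\JJ$, this gives $f(x)=x+g(x)\in\JJ$, so $f[\JJ]\subseteq\JJ$. Because $f$ is a bijection of $2^\omega$ permuting the two cosets $\JJ$ and $\UU$, this inclusion is automatically an equality, and continuity of $f$ and $f^{-1}$ on all of $2^\omega$ then guarantees that $f\upharpoonright\JJ$ is a homeomorphism of $\JJ$.

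The only mildly subtle step is the index-two observation that promotes the local statement $f[\JJ]=\JJ$ to the global containment $g[2^\omega]\subseteq\JJ$; once this is in hand, compactness does the work in the forward direction and density together with continuity of the group operation does the work in the backward direction.
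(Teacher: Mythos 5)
Your argument is essentially the paper's, and the forward direction is complete. The backward direction, however, has a gap at the step where you pass from $f[\JJ]\subseteq\JJ$ to $f[\JJ]=\JJ$. You justify this by saying that $f$ is ``a bijection of $2^\omega$ permuting the two cosets $\JJ$ and $\UU$'', but at that point you have not established this: your limit argument is applied only to points $x\in\JJ$, so all you know is that $\JJ$ is mapped into itself, and nothing yet constrains $f[\UU]$. For a bijection of a set $X=A\sqcup B$ onto itself, $f[A]\subseteq A$ does not in general imply $f[A]=A$ (consider $n\mapsto n+1$ on $\mathbb{Z}=\mathbb{N}\sqcup(\mathbb{Z}\setminus\mathbb{N})$), so the inclusion is not ``automatically'' an equality; as written the justification is circular, since ``$f$ permutes the cosets'' is precisely what remains to be proved.

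The repair is immediate and is exactly what the paper does: since $\JJ$ is a non-principal maximal ideal it contains $\FIN$, hence $\JJ$ is dense in $2^\omega$, and therefore $D$ is dense in $2^\omega$ and not merely in $\JJ$. Running your limit argument for an \emph{arbitrary} $x\in 2^\omega$ gives $g(x)=x+f(x)\in\cl(g[D])\subseteq\JJ$ for every $x$, and your coset observation then shows that $x$ and $f(x)$ lie in the same coset for every $x\in 2^\omega$. Hence $f$ maps each of $\JJ$ and $\UU$ into itself, and only now does bijectivity of $f$ force $f[\JJ]=\JJ$. With that one change your proof coincides with the paper's.
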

\begin{proof}
Assume that $f$ restricts to a homeomorphism of $\JJ$. It is easy to check that the function $g:2^\omega\longrightarrow 2^\omega$ defined by $g(x)=x+f(x)$ has range contained in $\JJ$. Since $g$ is continuous, its range must be compact, hence closed in $2^\omega$.

Now assume that $\cl(\{d+f(d):d\in D\})\subseteq\JJ$. Let $x\in 2^\omega$. Fix $d_n\in D$ for $n\in\omega$ so that $\lim_{n\to\infty}d_n=x$. By continuity,
$$
x+f(x)=\lim_{n\to\infty}\left(d_n+f(d_n)\right)\in\JJ.
$$
The proof is concluded by observing that if $a,b\in 2^\omega$ are such that $a+b\in\JJ$, then either $\{a,b\}\subseteq\JJ$ or $\{a,b\}\subseteq 2^\omega\setminus\JJ$.
\end{proof}

\begin{theorem}\label{cdh} Assume that $\mathrm{MA(countable)}$ holds. Then there exists a non-principal ultrafilter $\UU\subseteq 2^\omega$ that is countable dense homogeneous.
\end{theorem}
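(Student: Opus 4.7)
The plan is to construct, under $\mathrm{MA(countable)}$, a non-principal maximal ideal $\JJ\subseteq 2^\omega$ that is countable dense homogeneous; by the paper's earlier remarks, $\UU=c[\JJ]$ is then a non-principal ultrafilter homeomorphic to $\JJ$, hence itself countable dense homogeneous. Following the template of the proof of Theorem \ref{cb}, I would build by transfinite recursion of length $\cccc$ an ascending chain $(\II_\xi)_{\xi<\cccc}$ of families of coinfinite subsets of $\omega$, each with the finite union property and of size less than $\cccc$, and let $\JJ$ be any maximal ideal containing $\bigcup_{\xi<\cccc}\langle\II_\xi\rangle$. A standard bookkeeping enumerates pairs $(D_1^\eta,D_2^\eta)$ of countable dense subsets of $2^\omega$ with cofinal repetitions, so that every pair of countable dense subsets of the final $\JJ$ is treated at some stage where it already sits inside $\langle\II_\eta\rangle$.

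At a successor stage $\eta+1$, if $D_1^\eta\cup D_2^\eta\not\subseteq\langle\II_\eta\rangle$ set $\II_{\eta+1}=\II_\eta$; otherwise invoke a key lemma (call it Lemma \ref{keycdh}, to be stated and proved separately) asserting that, given a family $\II$ of size less than $\cccc$ with the finite union property and countable dense subsets $D_1,D_2\subseteq\langle\II\rangle$ of $2^\omega$, one can produce a homeomorphism $f\colon 2^\omega\to 2^\omega$ with $f[D_1]=D_2$ together with a coinfinite $x\subseteq\omega$ satisfying $d+f(d)\subseteq x$ for every $d\in D_1$ and such that $\II\cup\{x\}$ still has the finite union property. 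Then set $\II_{\eta+1}=\II_\eta\cup\{x\}$ and record $f_\eta=f$. At the end, given countable dense subsets $E_1,E_2$ of $\JJ$, the bookkeeping supplies a stage $\eta+1$ at which $(D_1^\eta,D_2^\eta)=(E_1,E_2)$; since $\{d+f_\eta(d):d\in E_1\}\subseteq x\!\downarrow$ and the compact set $x\!\downarrow$ is contained in $\JJ$, one has $\cl(\{d+f_\eta(d):d\in E_1\})\subseteq\JJ$, and Lemma \ref{davelemma} yields that $f_\eta$ restricts to a self-homeomorphism of $\JJ$ mapping $E_1$ onto $E_2$.

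The main obstacle is the proof of the key lemma. I would use a countable poset of conditions encoding finite approximations to $f$, to $x$, and to the partial bijection between $D_1$ and $D_2$ simultaneously: concretely, triples $(p,\phi,t)$ where $p$ is a bijection between two finite antichains of $\ct$ (inducing a piecewise-clopen map $\bigcup_{s\in\dom(p)}[s]\to\bigcup_{s\in\dom(p)}[p(s)]$), $\phi$ is a finite partial bijection from $D_1$ to $D_2$ refining $p$, and $t\in\ct$ is an initial segment of $x$ satisfying $d(i)=\phi(d)(i)$ whenever $i<|t|$ and $t(i)=0$. The ordering is coordinate-wise refinement. Density arguments would enforce: back-and-forth exhaustion of $D_1$ and $D_2$; refinement of $p$ converging in the generic to a homeomorphism of $2^\omega$; and, for each $\sigma\in[\II]^{<\omega}$ and $\ell\in\omega$, an extension of $t$ beyond $\ell$ placing a $0$ at some coordinate outside $\bigcup\sigma$ so as to keep $x$ coinfinite modulo $\II$. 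The delicate point, where the assumption $D_1,D_2\subseteq\langle\II\rangle$ enters, is density of this last family: each newly added pair $(d,\phi(d))$ has $d+\phi(d)$ contained in a finite union of elements of $\II$, which leaves room to extend $t$ without blocking back-and-forth. Granting these density checks, $\mathrm{MA(countable)}$ produces a generic filter whose union provides the required $f$ and $x$. The poset is modeled on the one of Lemma 3.1 in \cite{baldwin}, as the paper itself signals.
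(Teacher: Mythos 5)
Your overall strategy is the paper's: build a maximal ideal by transfinite recursion, reduce countable dense homogeneity to Lemma \ref{davelemma}, and prove a key lemma via $\mathrm{MA(countable)}$ applied to a countable poset that simultaneously approximates the homeomorphism $f$, the bijection between the dense sets, and the bounding set $x$. Your poset (bijections between finite maximal antichains, a partial bijection $D_1\to D_2$ refining it, and an initial segment of $x$) is essentially the paper's poset of triples $(s,g,\pi)$ with $\pi$ a permutation of ${}^n2$, and your density argument for keeping $x$ small modulo $\II$ uses exactly the point the paper uses, namely that $D_1\cup D_2\subseteq\langle\II\rangle$ leaves infinitely many coordinates where all relevant reals vanish.

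There is, however, a genuine gap in your bookkeeping. You let $\JJ$ be \emph{any} maximal ideal containing $\bigcup_{\xi<\cccc}\langle\II_\xi\rangle$, and you assert that cofinal repetition guarantees every pair of countable dense subsets of $\JJ$ is treated at a stage where it already lies in $\langle\II_\eta\rangle$. That assertion requires every element of $\JJ$ to belong to some $\langle\II_\xi\rangle$, i.e., that $\bigcup_\xi\langle\II_\xi\rangle$ is itself the maximal ideal; but nothing in your construction makes that union maximal, so $\JJ$ properly extends it and there will be countable dense subsets of $\JJ$ whose points are never captured. Your recursion simply skips such pairs (``set $\II_{\eta+1}=\II_\eta$'') at every stage, so no homeomorphism is ever produced for them. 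The paper closes this hole by actively deciding each pair at its designated stage: if $\II_\eta\cup\{\omega\setminus x\}$ has the finite union property for some $x\in D_\eta\cup E_\eta$, one adds $\omega\setminus x$ to the ideal, which forces $x\notin\JJ$ for every proper ideal extending $\II_{\eta+1}$ and so destroys the pair; otherwise one checks that $D_\eta\cup E_\eta\subseteq\langle\II_\eta\rangle$ and applies the key lemma. (Alternatively you could interleave a step deciding, for each $z\subseteq\omega$, whether $z$ or $\omega\setminus z$ goes into the ideal, so that the union is itself maximal.) A secondary, repairable point: the coherence conditions among $p$, $\phi$ and $t$ in your poset (in particular that a newly added pair $(d,e)$ must satisfy $e\supseteq p(s)$ while also agreeing with $d$ on all coordinates where $t$ is $0$) need to be checked compatible; the paper's formulation via level-$n$ permutations with the constraint imposed on all $t\in{}^n2$ handles this cleanly.
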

\begin{proof}
For notational convenience, we will construct a maximal ideal $\JJ\subseteq 2^\omega$ containing all finite sets that is countable dense homogeneous. Enumerate as $\{(D_\eta,E_\eta):\eta\in\cccc\}$ all pairs of countable dense subsets of $2^\omega$.

We will construct $\II_\xi$ for every $\xi\in\cccc$ by transfinite recursion. In the end, let $\JJ$ be any maximal ideal extending $\bigcup_{\xi\in\cccc}\II_\xi$. By induction, we will make sure that the following requirements are satisfied.
\begin{enumerate}
\item $\II_\mu\subseteq\II_\eta$ whenever $\mu\leq\eta<\cccc$.
\item $\II_\xi$ has the finite union property for every $\xi\in\cccc$.
\item\label{smallideal} $|\II_\xi|<\cccc$ for every $\xi\in\cccc$.
\item\label{appdavelemma} The pair $(D_\eta,E_\eta)$ is dealt with at stage $\xi=\eta+1$: that is, either $\omega\setminus x\in\II_\xi$ for some $x\in D_\eta\cup E_\eta$ or there exists $x\in\II_\xi$ and a homeomorphism $f_\eta:2^\omega\longrightarrow 2^\omega$ such that $f_\eta[D_\eta]=E_\eta$ and $\{d+f_\eta(d):d\in D_\eta\}\subseteq x\!\downarrow$.
\end{enumerate}
Observe that, by Lemma $\ref{davelemma}$, the second part of condition $(\ref{appdavelemma})$ guarantees that any maximal ideal $\JJ$ extending $\II_\xi$ will be such that $f_\eta:2^\omega\longrightarrow 2^\omega$ restricts to a homeomorphism of $\JJ$.

Start by letting $\II_0=\FIN$. Take unions at limit stages. At a successor stage $\xi=\eta+1$, assume that $\II_\eta$ is given. First assume that there exists $x\in D_\eta\cup E_\eta$ such that $\II_\eta\cup\{\omega\setminus x\}$ has the finite union property. In this case, we can just set $\II_\xi=\II_\eta\cup\{\omega\setminus x\}$.

Now assume that $\II_\eta\cup\{\omega\setminus x\}$ does not have the finite union property for any $x\in D_\eta\cup E_\eta$. It is easy to check that this implies $D_\eta\cup E_\eta\subseteq\langle\II_\eta\rangle$. Let $x$ and $f$ be given by applying Lemma $\ref{keycdh}$ with $\II=\II_\eta$, $D=D_\eta$ and $E=E_\eta$. Finally, set $\II_\xi=\II_\eta\cup\{x\}$ and $f_\eta=f$.
\end{proof}

\begin{lemma}\label{keycdh} Assume that $\mathrm{MA(countable)}$ holds. Let $\II\subseteq 2^\omega$ be a collection of subsets of $\omega$ with the finite union property and assume that $|\II|<\cccc$. Fix two countable dense subsets $D$ and $E$ of $2^\omega$ such that $D\cup E\subseteq\langle\II\rangle$. Then there exists a homeomorphism $f:2^\omega\longrightarrow2^\omega$ and $x\in 2^\omega$ such that $f[D]=E$, $\II\cup\{x\}$ still has the finite union property and $\{d+f(d):d\in D\}\subseteq x\!\downarrow$.
\end{lemma}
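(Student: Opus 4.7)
The plan is to build $f$ and $x$ simultaneously as generic objects of a countable poset $\PPP$ and apply $\mathrm{MA(countable)}$. Enumerate $D=\{d_k:k\in\omega\}$ and $E=\{e_j:j\in\omega\}$. A condition is a quadruple $p=(n_p,\pi_p,s_p,\phi_p)$ in which $n_p\in\omega$, $\pi_p$ is a permutation of ${}^{n_p}2$, $s_p\in {}^{n_p}2$, and $\phi_p$ is a finite partial injection from $\omega$ to $\omega$, subject to two coherence clauses: (i) $\pi_p(d_k\upharpoonright n_p)=e_{\phi_p(k)}\upharpoonright n_p$ for every $k\in\dom(\phi_p)$, and (ii) $\{i<n_p:t(i)\neq\pi_p(t)(i)\}\subseteq\{i<n_p:s_p(i)=1\}$ for every $t\in {}^{n_p}2$. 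One sets $q\leq p$ iff $s_p\subseteq s_q$, $\phi_p\subseteq\phi_q$, and $\pi_q$ refines $\pi_p$ in the tree sense: $\pi_q(u)\upharpoonright n_p=\pi_p(u\upharpoonright n_p)$ for every $u\in {}^{n_q}2$. This makes $\PPP$ countable.

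The dense sets to meet are $A_{\sigma,\ell}=\{p:\exists i\in[\ell,n_p),\ s_p(i)=0\text{ and }i\notin\bigcup\sigma\}$ for each $(\sigma,\ell)\in [\II]^{<\omega}\times\omega$, $B_k=\{p:k\in\dom(\phi_p)\}$ for each $k\in\omega$, and $C_j=\{p:j\in\ran(\phi_p)\}$ for each $j\in\omega$; a total of $|\II|+\omega<\cccc$ sets. Granting their density, $\mathrm{MA(countable)}$ yields a filter $G$ meeting them all, from which I extract the induced level-$n$ permutations $\pi^{(n)}$ (well-defined by refinement and present for all $n$), the homeomorphism $f(y)=\bigcup_n\pi^{(n)}(y\upharpoonright n)$ of $2^\omega$, the point $x=\bigcup_{p\in G}s_p\in 2^\omega$, and the bijection $\phi=\bigcup_{p\in G}\phi_p:\omega\to\omega$. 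Clause (i) forces $f(d_k)=e_{\phi(k)}$ for every $k$, so $f[D]=E$. Clause (ii) forces $s(i)=1$ whenever $d_k(i)\neq e_{\phi(k)}(i)$ (take any $q\in G$ with $k\in\dom(\phi_q)$ and $n_q>i$), giving $d+f(d)\subseteq x$ for every $d\in D$. The sets $A_{\sigma,\ell}$ then give the finite union property for $\II\cup\{x\}$.

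Density of $B_k$ and $C_j$ is routine back-and-forth: to force $k\in\dom(\phi_p)$, use the density of $E$ in $2^\omega$ to pick $j\notin\ran(\phi_p)$ with $e_j\in[\pi_p(d_k\upharpoonright n_p)]$, then enlarge $n_p$ to an $n_q$ separating all relevant branches and build $\pi_q$ block by block to realize every required matching. The main obstacle is the density of $A_{\sigma,\ell}$: to legitimately set $s_q(i)=0$ at some $i\geq\ell$, I must extend $\pi_p$ to a permutation that preserves the $i$-th coordinate, and by clause (i) this preservation forces $d_{k'}(i)=e_{\phi_p(k')}(i)$ for every $k'\in\dom(\phi_p)$. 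Hence $i$ must be chosen outside $\bigcup\sigma\cup\bigcup_{k'\in\dom(\phi_p)}(d_{k'}+e_{\phi_p(k')})$. This is precisely where the hypothesis $D\cup E\subseteq\langle\II\rangle$ enters: each $d_{k'}+e_{\phi_p(k')}\subseteq d_{k'}\cup e_{\phi_p(k')}$ is contained in a finite union of members of $\II$, so the entire set to avoid is a finite union from $\II$ and hence coinfinite by the finite union property of $\II$; some $i\geq\max(\ell,n_p)$ outside it exists. Within each refinement block $\{u\in {}^{n_q}2:u\supseteq t\}\to\{v\in {}^{n_q}2:v\supseteq\pi_p(t)\}$, both sides split evenly by the $i$-th bit, so one can choose a bijection preserving the $i$-th bit while realizing the finitely many mappings prescribed by $\phi_p$, completing the extension.
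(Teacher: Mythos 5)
Your construction is essentially the paper's own proof: the same countable poset of conditions consisting of a finite level, a level permutation, a finite partial matching of $D$ with $E$, and an initial segment of $x$, the same three families of dense sets, and the same use of $D\cup E\subseteq\langle\II\rangle$ together with the finite union property of $\II$ to locate a coordinate that can be kept out of $x$. The one point to make explicit in the density argument for $A_{\sigma,\ell}$ is that $n_q$ must also be taken large enough to separate the finitely many branches $d_{k'}\upharpoonright n_q$ and their prescribed images $e_{\phi_p(k')}\upharpoonright n_q$ before the block bijections are chosen (otherwise the prescribed mappings need not form a well-defined injection), but this is exactly the separation step you already carry out for $B_k$ and $C_j$, so the argument goes through.
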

\begin{proof}
Consider the countable poset $\PPP$ consisting of all triples of the form $p=(s,g,\pi)=(s_p,g_p,\pi_p)$ such that, for some $n=n_p\in\omega$, the following requirements are satisfied.
\begin{itemize}
\item $s:n\longrightarrow 2$.
\item $g$ is a bijection between a finite subset of $D$ and a finite subset of $E$.
\item $\pi$ is a permutation of ${}^{n}2$.
\end{itemize}
Furthermore, we require the following compatibility conditions to be satisfied. Condition $(\ref{pis})$ will actually ensure that $\{d+f(d):d\in 2^\omega\}\subseteq x\!\downarrow$. Notice that this is equivalent to $(d+f(d))(i)\leq x(i)$ for all $d\in 2^\omega$ and $i\in\omega$.
\begin{enumerate}
\item\label{pis} $(t+\pi(t))(i)=1$ implies $s(i)=1$ for every $t\in{}^{n}2$ and $i\in n$.
\item\label{pig} $\pi(d\upharpoonright n)=g(d)\upharpoonright n$ for every $d\in\dom(g)$.
\end{enumerate}
Order $\PPP$ by declaring $q\leq p$ if the following conditions are satisfied.
\begin{itemize}
\item $s_q\supseteq s_p$.
\item $g_q\supseteq g_p$.
\item $\pi_q(t)\upharpoonright n_p=\pi_p(t\upharpoonright n_p)$ for all $t\in {}^{n_q}2$.
\end{itemize}

For each $d\in D$, define
$$
D^{\dom}_d=\{p\in\PPP:d\in\dom(g_p)\}.
$$
Given $p\in\PPP$ and $d\in D\setminus \dom(g_p)$, one can simply choose $e\in E\setminus \ran(g_p)$ such that $e\upharpoonright n_p=\pi_p(d\upharpoonright n_p)$. This choice will make sure that $q=(s_p,g_p\cup\{(d,e)\},\pi_p)\in\PPP$. Furthermore it is clear that $q\leq p$. So each $D^{\dom}_d$ is dense in $\PPP$. 

For each $e\in E$, define
$$
D^{\ran}_e=\{p\in\PPP:e\in\ran(g_p)\}.
$$
As above, one can easily show that each $D^{\ran}_e$ is dense in $\PPP$.

For every $\sigma=\{x_1,\ldots, x_k\}\in [\II]^{<\omega}$ and $\ell\in\omega$, define
$$
D_{\sigma,\ell}=\{p\in\PPP:\textrm{there exists }i\in n_p\setminus\ell \textrm{ such that }s_p(i)=x_1(i)=\cdots=x_k(i)=0\}.
$$
Next, we will prove that each $D_{\sigma,\ell}$ is dense in $\PPP$. So fix $\sigma$ and $\ell$ as above. Let $p=(s,g,\pi)\in\PPP$ with $n_p=n$. Find $n'\geq\ell,n$ such that the following conditions hold.
\begin{itemize}
\item All $d\upharpoonright n'$ for $d\in\dom(g)$ are distinct.
\item All $e\upharpoonright n'$ for $e\in\ran(g)$ are distinct.
\item $x_1(n')=\cdots=x_k(n')=d(n')=e(n')=0$ for all $d\in\dom(g)$, $e\in\ran(g)$.
\end{itemize}
This is possible because $\II$ has the finite union property and
$$
\sigma\cup\dom(g)\cup\ran(g)\subseteq\langle\II\rangle.
$$
We can choose a permutation $\pi'$ of ${}^{n'}2$ such that $\pi'(d\upharpoonright n')=g(d)\upharpoonright n'$ for every $d\in\dom(g)$ and $\pi'(t)\upharpoonright n=\pi(t\upharpoonright n)$ for all $t\in {}^{n'}2$. Extend $s$ to $s':n'\longrightarrow 2$ by setting $s'(i)=1$ for every $i\in [n,n')$. It is clear that $p'=(s',g,\pi')\in\PPP$ and $p'\leq p$.

Now let $\pi''$ be the permutation of ${}^{n'+1}2$ obtained by setting
$$
\pi''(t)=\pi'(t\upharpoonright n')^\frown t(n')
$$
for all $t\in {}^{n'+1}2$. Extend $s'$ to $s'':n'+1\longrightarrow 2$ by setting $s''(n')=0$. It is easy to check that $p''=(s'',g,\pi'')\in D_{\sigma,\ell}$ and $p''\leq p'$.

Since $|\II|<\cccc$, the collection of dense sets
$$
\DD=\{D_{\sigma,\ell}:\sigma\in [\II]^{<\omega},\ell\in\omega\}\cup\{D^\dom_d:d\in D\}\cup\{D^\ran_e:e\in E\}
$$
has also size less than $\cccc$. Therefore, by $\mathrm{MA(countable)}$, there exists a $\DD$-generic filter $G\subseteq\PPP$. Define $x=\bigcup\{s_p:p\in G\}$. To define $f(y)(i)$, for a given $y\in 2^\omega$ and $i\in\omega$, choose any $p\in G$ such that $i\in n_p$ and set $f(y)(i)=\pi_p(y\upharpoonright n_p)(i)$.
\end{proof}

\begin{question} Can the assumption that $\mathrm{MA(countable)}$ holds be dropped in Theorem $\ref{cdh}$?
\end{question}

By Theorem 2.3 in \cite{hrusak}, every analytic countable dense homogeneous space must be completely Baire. So the following question seems natural. See also Theorem 2.6 in \cite{hrusak}.

\begin{question} Is a countable dense homogeneous ultrafilter $\UU\subseteq 2^\omega$ necessarily completely Baire?
\end{question}

\section{A question of Hru\v{s}\'ak and Zamora Avil\'es}
The main result of \cite{hrusak} states that, given a Borel subset $X$ of $2^\omega$, the following statements are equivalent.
\begin{itemize}
\item $X^\omega$ is countable dense homogeneous.
\item $X$ is a $G_\delta$.
\end{itemize}
Question 3.2 in the same paper asks whether there exists a non-$G_\delta$ subset $X$ of $2^\omega$ such that $X^\omega$ is countable dense homogeneous. By a rather straightforward modification of the proof of Theorem $\ref{cdh}$, we will give a consistent answer to such question (see Corollary $\ref{consanswer}$).

Our example is also relevant to the second half of Question 387 in \cite{fitzpatrick2}, which asks to characterize the zero-dimensional spaces $X$ such that $X^\omega$ is countable dense homogeneous. 

Observe that, given any ideal $\II\subseteq 2^\omega$, the infinite product $\II^\omega$ inherits the structure of topological group using coordinate-wise addition. The following lemma is proved exactly like the corresponding half of Lemma $\ref{davelemma}$.
\begin{lemma}\label{davelemmapower}
Let $f:(2^\omega)^\omega\longrightarrow (2^\omega)^\omega$ be a homeomorphism. Fix a non-principal maximal ideal $\JJ\subseteq 2^\omega$ and a countable dense subset $D$ of $\JJ^\omega$. If $\cl(\{d+f(d):d\in D\})\subseteq\JJ^\omega$ then $f$ restricts to a homeomorphism of $\JJ^\omega$.
\end{lemma}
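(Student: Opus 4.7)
The plan is to mimic the ``if'' direction of the proof of Lemma~$\ref{davelemma}$ almost verbatim, with the only new point being that everything lifts cleanly from coordinates to the product.

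First I would observe that $\JJ^\omega$ is dense in $(2^\omega)^\omega$: since $\JJ$ is non-principal and maximal, $\FIN\subseteq\JJ$, so $\JJ$ is dense in $2^\omega$, and a product of dense sets is dense. Because $D$ is dense in $\JJ^\omega$, it follows that $D$ is dense in $(2^\omega)^\omega$. Hence, for any $x\in(2^\omega)^\omega$, I can pick $d_n\in D$ with $d_n\to x$, and then by continuity of $f$ and coordinatewise addition,
$$
x+f(x)=\lim_{n\to\infty}\bigl(d_n+f(d_n)\bigr)\in\cl(\{d+f(d):d\in D\})\subseteq\JJ^\omega.
$$
Projecting to the $k$-th coordinate gives $x_k+f(x)_k\in\JJ$ for every $k\in\omega$.

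The second step is to invoke the same Boolean-algebra observation used at the end of Lemma~$\ref{davelemma}$: since $\JJ$ is an ideal, it is closed under symmetric difference, and since it is maximal its complement is an ultrafilter, so whenever $a+b\in\JJ$ one has $a\in\JJ$ if and only if $b\in\JJ$. Applied coordinatewise to $x_k+f(x)_k\in\JJ$, this gives $x_k\in\JJ\iff f(x)_k\in\JJ$ for every $k$, and therefore $x\in\JJ^\omega\iff f(x)\in\JJ^\omega$. Since $f$ is a bijection of $(2^\omega)^\omega$ preserving the partition $\JJ^\omega\sqcup((2^\omega)^\omega\setminus\JJ^\omega)$, we get $f[\JJ^\omega]=\JJ^\omega$, and the restriction $f\upharpoonright\JJ^\omega$ is automatically a homeomorphism because $f$ is.

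There is essentially no obstacle. The only things that could conceivably go wrong are the density step and the subgroup step, but both lift cleanly: density because a product of dense sets is dense, and the subgroup step because membership in $\JJ^\omega$ is decided coordinate by coordinate, exactly matching the coordinate-by-coordinate information extracted from $x+f(x)\in\JJ^\omega$.
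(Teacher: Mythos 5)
Your proof is correct and is essentially the paper's own argument: the paper simply notes that this lemma "is proved exactly like the corresponding half of Lemma~\ref{davelemma}," and your write-up carries out that adaptation faithfully (density of $D$ in $(2^\omega)^\omega$ via density of $\JJ$ in $2^\omega$, a sequential limit argument using metrizability of the countable product, and the coordinatewise use of the fact that $a+b\in\JJ$ forces $a\in\JJ\iff b\in\JJ$). No gaps.
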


\begin{theorem}\label{cdhpower} Assume that $\mathrm{MA(countable)}$ holds. Then there exists a non-principal ultrafilter $\UU\subseteq 2^\omega$ such that $\UU^\omega$ is countable dense homogeneous.
\end{theorem}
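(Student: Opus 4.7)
The plan is to adapt the proof of Theorem \ref{cdh} almost verbatim, with countable dense subsets of $(2^\omega)^\omega$ in place of $2^\omega$ and Lemma \ref{davelemmapower} in place of Lemma \ref{davelemma}. I will build a non-principal maximal ideal $\JJ\subseteq 2^\omega$ by transfinite recursion and take $\UU=c[\JJ]$; since $\JJ^\omega$ and $\UU^\omega$ are homeomorphic via coordinate-wise application of $c$, it suffices to make $\JJ^\omega$ countable dense homogeneous. Enumerate all pairs $(D_\eta,E_\eta)$ of countable dense subsets of $(2^\omega)^\omega$ as $\{(D_\eta,E_\eta):\eta\in\cccc\}$, set $\II_0=\FIN$, and build a $\subseteq$-increasing chain $\{\II_\xi:\xi<\cccc\}$ of subsets of $\PP(\omega)$ with the finite union property and $|\II_\xi|<\cccc$, taking unions at limits; finally let $\JJ$ be any maximal ideal extending $\bigcup_{\xi<\cccc}\II_\xi$.

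At the successor stage $\xi=\eta+1$ I treat the pair $(D_\eta,E_\eta)$. If there exist $d\in D_\eta\cup E_\eta$ and $i\in\omega$ such that $\II_\eta\cup\{\omega\setminus d_i\}$ still has the finite union property, I put $\II_\xi=\II_\eta\cup\{\omega\setminus d_i\}$; this forces $d_i\notin\JJ$, hence $d\notin\JJ^\omega$, killing the pair. Otherwise every coordinate of every element of $D_\eta\cup E_\eta$ lies in $\langle\II_\eta\rangle$, and I invoke a power-version of Lemma \ref{keycdh} to produce a homeomorphism $f_\eta:(2^\omega)^\omega\longrightarrow(2^\omega)^\omega$ with $f_\eta[D_\eta]=E_\eta$ and an $x\in 2^\omega$ such that $\II_\eta\cup\{x\}$ still has the finite union property and $\{d+f_\eta(d):d\in D_\eta\}\subseteq(x\!\downarrow)^\omega$. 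Setting $\II_\xi=\II_\eta\cup\{x\}$ yields $\cl(\{d+f_\eta(d):d\in D_\eta\})\subseteq\JJ^\omega$ for any maximal ideal $\JJ\supseteq\II_\xi$, so Lemma \ref{davelemmapower} forces $f_\eta$ to restrict to a self-homeomorphism of $\JJ^\omega$ sending $D_\eta$ to $E_\eta$.

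The power-version of Lemma \ref{keycdh} is established by a countable poset $\PPP$ of conditions $p=(m_p,n_p,s_p,g_p,\pi_p)$, where $m_p,n_p\in\omega$, $s_p:n_p\longrightarrow 2$, $g_p$ is a finite partial bijection between subsets of $D$ and $E$, and $\pi_p$ is a permutation of $({}^{n_p}2)^{m_p}$ viewed as the set of $m_p\times n_p$ binary matrices. Writing $d\upharpoonright(m_p,n_p)=(d_i\upharpoonright n_p)_{i<m_p}$, the compatibility conditions are (i) $\pi_p(d\upharpoonright(m_p,n_p))=g_p(d)\upharpoonright(m_p,n_p)$ for every $d\in\dom(g_p)$, and (ii) for every $t\in({}^{n_p}2)^{m_p}$, $i<m_p$, and $j<n_p$, if $t_i(j)\neq\pi_p(t)_i(j)$ then $s_p(j)=1$; the order extends all five components coherently. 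Condition (ii) is exactly what yields the pointwise bound $(d+f_\eta(d))_i(j)\leq x(j)$ in the limit.

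The main obstacle is the density of the sets $D_{\sigma,\ell}=\{p\in\PPP:\exists j\in n_p\setminus\ell,\;s_p(j)=x_1(j)=\cdots=x_k(j)=0\}$ for $\sigma=\{x_1,\ldots,x_k\}\in[\II]^{<\omega}$. Given $p$, enlarge $m_p,n_p$ to $m_q,n_q$ large enough that the restrictions $d\upharpoonright(m_q,n_q)$ are pairwise distinct as $d$ ranges over $\dom(g_p)$, and similarly for $\ran(g_p)$; installing a new column $j$ with $s_q(j)=0$ then forces $\pi_q$ to preserve the bit at column $j$ on every row, so by (i) one must have $d_i(j)=g_p(d)_i(j)$ for all $d\in\dom(g_p)$ and $i<m_q$. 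Hence $j$ must avoid the finite union $x_1\cup\cdots\cup x_k\cup\bigcup_{d\in\dom(g_p),\,i<m_q}(d_i+g_p(d)_i)$, each of whose terms lies in $\langle\II\rangle$ by the coordinate-wise hypothesis on $D$ and $E$; this union is therefore coinfinite and a suitable $j\geq\ell$ can be chosen. The dense sets $D^{\dom}_d$ and $D^{\ran}_e$ expanding $g_p$ are handled as in Lemma \ref{keycdh}, using the density of $E$ and $D$ in the product topology to realize any prescribed matrix restriction. A $\DD$-generic filter from $\mathrm{MA(countable)}$ then yields $x=\bigcup_{p\in G}s_p$, the bijection $g=\bigcup_{p\in G}g_p:D\longrightarrow E$, and the homeomorphism $f_\eta$ coherently assembled from the $\pi_p$'s.
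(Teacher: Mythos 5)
Your overall architecture (build a maximal ideal by recursion, kill a pair by removing a coordinate from $\JJ$ when possible, otherwise realize a homeomorphism via a forcing lemma and appeal to Lemma \ref{davelemmapower}) is exactly the paper's, but your power-version of Lemma \ref{keycdh} is stated with a \emph{single} set $x$ and the containment $\{d+f(d):d\in D\}\subseteq(x\!\downarrow)^\omega$, and in that form the lemma is false. Take $\II=\FIN$ (the very first successor stage), let $D$ be a countable dense set of finitely-supported points containing $d^\ast=(\varnothing,\varnothing,\dots)$, and let $E$ be a countable dense set all of whose points $e$ have every coordinate $e_i$ finite but $\bigcup_{i\in\omega}e_i$ cofinite (replace the unconstrained coordinates of any dense set of finitely-supported points by $e_i=\{i\}$). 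Then $D\cup E\subseteq\langle\II\rangle^\omega$ and the first branch of your recursion does not apply, but for any $f$ with $f[D]=E$ we get $d^\ast+f(d^\ast)=f(d^\ast)\in E$, so $(x\!\downarrow)^\omega\ni f(d^\ast)$ forces $x\supseteq\bigcup_i f(d^\ast)_i$ to be cofinite, and $\FIN\cup\{x\}$ cannot have the finite union property. The same defect surfaces inside your poset: once a column $j$ has $s_p(j)=0$ (which $D_{\sigma,\ell}$ forces to happen), conditions (i) and (ii) require $d_i(j)=g_p(d)_i(j)$ for \emph{every} row $i<m_q$ of every later condition $q$, including rows that did not exist when the pair $(d,g_p(d))$ was committed; since your $D^{\dom}_d$ argument only matches $e$ to $d$ on the current $m_p\times n_p$ rectangle, the dense sets $\{p:m_p\geq\ell\}$ (which you need for $f$ to be everywhere defined and for $f(d)=g(d)$) fail to be dense.

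The repair is the paper's formulation: allow a different bound $x_i$ for each coordinate, i.e.\ require $\{d+f(d):d\in D\}\subseteq\prod_{i\in\omega}(x_i\!\downarrow)$ with $\II\cup\{x_i:i\in\omega\}$ having the finite union property, and correspondingly let $s_p$ be defined on the rectangle $m_p\times n_p$ rather than on a single row. When a condition is extended, all new cells of $s$ --- in particular new rows over old columns --- are filled with the value $1$, so no retroactive agreement between $d_i$ and $g(d)_i$ is imposed on newly added rows. The finite union property of $\II\cup\{x_i:i\in\omega\}$ is then recovered from the dense sets $D_{\sigma,\ell}$, which produce columns $j\geq\ell$ at which all \emph{currently present} rows of $s$ and all members of $\sigma$ vanish, combined with the dense sets $D_\ell$ that push $m_p$ past any finite set of indices one cares about. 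With that change, the rest of your density arguments (for $D^{\dom}_d$, $D^{\ran}_e$, and the column-avoidance computation using that the relevant symmetric differences lie in $\langle\II\rangle$) go through essentially as you wrote them and coincide with the paper's proof.
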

\begin{proof}
For notational convenience, we will construct a maximal ideal $\JJ\subseteq 2^\omega$ containing all finite sets such that $\JJ^\omega$ is countable dense homogeneous. Enumerate as $\{(D_\eta,E_\eta):\eta\in\cccc\}$ all pairs of countable dense subsets of $(2^\omega)^\omega$.

We will construct $\II_\xi$ for every $\xi\in\cccc$ by transfinite recursion. In the end, let $\JJ$ be any maximal ideal extending $\bigcup_{\xi\in\cccc}\II_\xi$. By induction, we will make sure that the following requirements are satisfied. Let $P_\eta=\bigcup_{i\in\omega}\pi_i[D_\eta\cup E_\eta]$, where $\pi_i:(2^\omega)^\omega\longrightarrow 2^\omega$ is the natural projection.
\begin{enumerate}
\item $\II_\mu\subseteq\II_\eta$ whenever $\mu\leq\eta<\cccc$.
\item $\II_\xi$ has the finite union property for every $\xi\in\cccc$.
\item\label{smallidealpower} $|\II_\xi|<\cccc$ for every $\xi\in\cccc$.
\item\label{appdavelemmapower} The pair $(D_\eta,E_\eta)$ is dealt with at stage $\xi=\eta+1$: that is, either $\omega\setminus x\in\II_\xi$ for some $x\in P_\eta$ or there exists $x_i\in\II_\xi$ for every $i\in\omega$ and a homeomorphism $f_\eta:(2^\omega)^\omega\longrightarrow (2^\omega)^\omega$ such that $f_\eta[D_\eta]=E_\eta$ and $\{d+f_\eta(d):d\in D_\eta\}\subseteq\prod_{i\in\omega}(x_i\!\downarrow)$.
\end{enumerate}
Observe that, by Lemma $\ref{davelemmapower}$, the second part of condition $(\ref{appdavelemma})$ guarantees that any maximal ideal $\JJ$ extending $\II_\xi$ will be such that $f_\eta:(2^\omega)^\omega\longrightarrow (2^\omega)^\omega$ restricts to a homeomorphism of $\JJ^\omega$.

Start by letting $\II_0=\FIN$. Take unions at limit stages. At a successor stage $\xi=\eta+1$, assume that $\II_\eta$ is given. First assume that there exists $x\in P_\eta$ such that $\II_\eta\cup\{\omega\setminus x\}$ has the finite union property. In this case, we can just set $\II_\xi=\II_\eta\cup\{\omega\setminus x\}$.

Now assume that $\II_\eta\cup\{\omega\setminus x\}$ does not have the finite intersection property for any $x\in P_\eta$. It is easy to check that this implies $P_\eta\subseteq\langle\II_\eta\rangle$, hence $D_\eta\cup E_\eta\subseteq\langle\II_\eta\rangle^\omega$. Let $x_i$ for $i\in\omega$ and $f$ be given by applying Lemma $\ref{keycdhpower}$ with $\II=\II_\eta$, $D=D_\eta$ and $E=E_\eta$. Finally, set $\II_\xi=\II_\eta\cup\{x_i:i\in\omega\}$ and $f_\eta=f$.
\end{proof}

\begin{lemma}\label{keycdhpower} Assume that $\mathrm{MA(countable)}$ holds. Let $\II\subseteq 2^\omega$ be a collection of subsets of $\omega$ with the finite union property and assume that $|\II|<\cccc$. Fix two countable dense subsets $D$ and $E$ of $(2^\omega)^\omega$ such that $D\cup E\subseteq\langle\II\rangle^\omega$. Then there exists a homeomorphism $f:(2^\omega)^\omega\longrightarrow (2^\omega)^\omega$ and $x_i\in 2^\omega$ for $i\in\omega$ such that $f[D]=E$, $\II\cup\{x_i:i\in\omega\}$ still has the finite union property and $\{d+f(d):d\in D\}\subseteq\prod_{i\in\omega}(x_i\!\downarrow)$.
\end{lemma}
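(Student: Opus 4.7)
The plan is to mimic the proof of Lemma \ref{keycdh}, replacing its countable poset (which parameterized $f$ and a single $x$) by a poset that parameterizes finitely many coordinates of the target homeomorphism of $(2^\omega)^\omega$ together with finitely many of the $x_i$ simultaneously. A condition will be a triple $p = (\vec{s}_p, g_p, \pi_p)$ such that, for some $m_p, n_p \in \omega$, $\vec{s}_p = (s_{p,0},\ldots, s_{p,m_p-1})$ with each $s_{p,i} : n_p \to 2$, $g_p$ is a finite partial bijection from $D$ to $E$, and $\pi_p$ is a permutation of $(2^{n_p})^{m_p}$ approximating $f$ on the rectangle $m_p \times n_p$. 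The compatibility axioms mirror (1) and (2) of Lemma \ref{keycdh}: $(t + \pi_p(t))_i(j) = 1$ forces $s_{p,i}(j) = 1$ for every $t \in (2^{n_p})^{m_p}$ and $(i,j) \in m_p \times n_p$ (this will guarantee $\{d + f(d) : d \in D\} \subseteq \prod_{i \in \omega}(x_i\!\downarrow)$), and $\pi_p(d\upharpoonright (m_p, n_p))_i = g_p(d)_i \upharpoonright n_p$ for every $d \in \dom(g_p)$ and $i < m_p$, where $d \upharpoonright (m_p, n_p)$ denotes the rectangular restriction $(d_0\upharpoonright n_p,\ldots, d_{m_p-1}\upharpoonright n_p)$. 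The order on $\PPP$ is the natural coordinatewise extension.

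I will use the obvious dense sets $D^{\dom}_d$ forcing $d \in \dom(g_p)$ for $d \in D$, $D^{\ran}_e$ forcing $e \in \ran(g_p)$ for $e \in E$, and sets forcing $m_p \geq m$ and $n_p \geq n$ (so that $f$ and each $x_i$ end up totally defined in the limit), together with the crucial family
$$
D_{\sigma,F,\ell} = \{p \in \PPP : m_p > \max F \textrm{ and } \exists j \in n_p \setminus \ell\ \forall x \in \sigma\ \forall i \in F\ s_{p,i}(j) = x(j) = 0\}
$$
indexed by $\sigma \in [\II]^{<\omega}$, $F \in [\omega]^{<\omega}$, and $\ell \in \omega$. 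The last family secures the finite union property for $\II \cup \{x_i : i \in \omega\}$, because it controls finitely many of the $x_i$ at a time. Since $|\II| < \cccc$, the total collection of dense sets has size less than $\cccc$, so $\mathrm{MA(countable)}$ will supply a sufficiently generic filter $G$, from which $x_i := \bigcup\{s_{p,i} : p \in G\}$ and $f(y)_i(j) := \pi_p(y \upharpoonright (m_p, n_p))_i(j)$ for any $p \in G$ with $m_p > i$ and $n_p > j$ are read off; continuity of $f$ and $f^{-1}$ follows from finite-support of the definitions via $\pi_p$ and $\pi_p^{-1}$.

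The main obstacle, exactly as in Lemma \ref{keycdh}, is the density of $D_{\sigma,F,\ell}$. Given $p \in \PPP$, first choose $m' \geq \max(m_p, \max F + 1)$ and then $n' \geq \max(n_p, \ell)$ large enough that (a) the restrictions $d \upharpoonright (m', n')$ are distinct for $d \in \dom(g_p)$ and similarly for $e \in \ran(g_p)$, and (b) $d_i(n') = e_i(n') = x(n') = 0$ for all $d \in \dom(g_p)$, $e \in \ran(g_p)$, $x \in \sigma$, and $i < m'$. The assumption $D \cup E \subseteq \langle\II\rangle^\omega$ places all the sets appearing in (b) inside $\langle\II\rangle$, and since $\II$ has the finite union property, such an $n'$ exists. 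Extend $\pi_p$ to a permutation $\pi'$ of $(2^{n'})^{m'}$ respecting $g_p$ and the coherence requirements, and extend $\vec{s}_p$ by placing $1$'s at all new rectangle positions, obtaining $p' \leq p$. Then extend once more to $n'' = n' + 1$ by defining $\pi''(t)_i := \pi'(t \upharpoonright (m', n'))_i \!\frown\! t_i(n')$ and $s''_{p,i}(n') := 0$ for all $i < m'$; the concatenation trick forces $(t + \pi''(t))_i(n') = t_i(n') + t_i(n') = 0$, so the compatibility axiom is preserved, while $p'' \in D_{\sigma, F, \ell}$ by (b). Once this rectangular bookkeeping is in place, verifying that $f[D] = E$ and appealing to Lemma \ref{davelemmapower} to finish is routine.
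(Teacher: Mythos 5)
Your proposal is correct and follows essentially the same route as the paper: the same rectangular poset of triples $(s,g,\pi)$ on $m_p\times n_p$ grids (the paper identifies $(2^\omega)^\omega$ with $2^{\omega\times\omega}$ rather than using tuples, a purely notational difference), the same concatenation trick for the density of the crucial sets, and the same appeal to Lemma \ref{davelemmapower}. The only cosmetic deviation is that you index the key dense sets by an extra finite set $F$ of coordinates, where the paper instead uses separate dense sets forcing $m_p\geq\ell$ and requires the zero column across all $i<m_p$; the two bookkeeping schemes are interchangeable.
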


\begin{proof}
We will make a natural identification of $(2^\omega)^\omega$ with $2^{\omega\times\omega}$. Namely, we will identify a sequence $(x_i)_{i\in\omega}$ with the function $x$ given by $x(i,j)=x_i(j)$.

Consider the countable poset $\PPP$ consisting of all triples of the form $p=(s,g,\pi)=(s_p,g_p,\pi_p)$ such that, for some $m=m_p\in\omega$ and $n=n_p\in\omega$, the following requirements are satisfied.
\begin{itemize}
\item $s:m\times n\longrightarrow 2$.
\item $g$ is a bijection between a finite subset of $D$ and a finite subset of $E$.
\item $\pi$ is a permutation of ${}^{m\times n}2$.
\end{itemize}
Furthermore, we require the following compatibility conditions to be satisfied. Condition $(\ref{pispower})$ will actually ensure that $\{d+f(d):d\in (2^\omega)^\omega\}\subseteq \prod_{i\in\omega}(x_i\!\downarrow)$. Notice that this is equivalent to $(d+f(d))(i,j)\leq x(i,j)=x_i(j)$ for all $d\in 2^{\omega\times\omega}$ and $(i,j)\in\omega\times\omega$.
\begin{enumerate}
\item\label{pispower} $(t+\pi(t))(i,j)=1$ implies $s(i,j)=1$ for every $t\in{}^{m\times n}2$ and $(i,j)\in m\times n$.
\item\label{pigpower} $\pi(d\upharpoonright (m\times n))=g(d)\upharpoonright (m\times n)$ for every $d\in\dom(g)$.
\end{enumerate}
Order $\PPP$ by declaring $q\leq p$ if the following conditions are satisfied.
\begin{itemize}
\item $s_q\supseteq s_p$.
\item $g_q\supseteq g_p$.
\item $\pi_q(t)\upharpoonright (m_p\times n_p)=\pi_p(t\upharpoonright (m_p\times n_p))$ for all $t\in {}^{m_q\times n_q}2$.
\end{itemize}

For each $d\in D$, define
$$
D^{\dom}_d=\{p\in\PPP:d\in\dom(g_p)\}.
$$
Given $p\in\PPP$ and $d\in D\setminus \dom(g_p)$, one can simply choose $e\in E\setminus \ran(g_p)$ such that $e\upharpoonright (m_p\times n_p)=\pi_p(d\upharpoonright (m_p\times n_p))$. This choice will make sure that $q=(s_p,g_p\cup\{(d,e)\},\pi_p)\in\PPP$. Furthermore it is clear that $q\leq p$. So each $D^{\dom}_d$ is dense in $\PPP$. 

For each $e\in E$, define
$$
D^{\ran}_e=\{p\in\PPP:e\in\ran(g_p)\}.
$$
As above, one can easily show that each $D^{\ran}_e$ is dense in $\PPP$.

For each $\sigma=\{x_1,\ldots, x_k\}\in [\II]^{<\omega}$ and $\ell\in\omega$, define
$$
D_{\sigma,\ell}=\{p\in\PPP:\textrm{there exists }j\in n_p\setminus\ell\textrm{ such that }
$$
$$
s_p(0,j)=\cdots=s_p(m_p-1,j)=x_1(j)=\cdots=x_k(j)=0\}.
$$
Next, we will prove that each $D_{\sigma,\ell}$ is dense in $\PPP$. So fix $\sigma$ and $\ell$ as above. Let $p=(s,g,\pi)\in\PPP$ with $m_p=m$ and $n_p=n$. Find $m'\geq m$ and $n'\geq\ell,n$ such that the following conditions hold.
\begin{itemize}
\item All $d\upharpoonright (m'\times n')$ for $d\in\dom(g)$ are distinct.
\item All $e\upharpoonright (m'\times n')$ for $e\in\ran(g)$ are distinct.
\item $x_1(n')=\cdots=x_k(n')=d_i(n')=e_i(n')=0$ for all $d\in\dom(g)$, $e\in\ran(g)$ and $i\in m'$.
\end{itemize}
This is possible because $\II$ has the finite union property and 
$$
\sigma\cup\{d_i:d\in\dom(g),i\in\omega\}\cup\{e_i:e\in\ran(g),i\in\omega\}\subseteq\langle\II\rangle.
$$
We can choose a permutation $\pi'$ of ${}^{m'\times n'}2$ such that $\pi'(d\upharpoonright (m'\times n'))=g(d)\upharpoonright (m'\times n')$ for every $d\in\dom(g)$ and $\pi'(t)\upharpoonright(m\times n)=\pi(t\upharpoonright (m\times n))$ for all $t\in {}^{m'\times n'}2$. Extend $s$ to $s':m'\times n'\longrightarrow 2$ by setting $s'(i,j)=1$ for every $(i,j)\in (m'\times n')\setminus (m\times n)$. It is clear that $p'=(s',g,\pi')\in\PPP$ and $p'\leq p$.

Now let $\pi''$ be the permutation of ${}^{m'\times (n'+1)}2$ obtained by setting
$$
\pi''(t)(i,j)=
\begin{cases} \pi'(t\upharpoonright (m'\times n'))(i,j) &\textrm{if }(i,j)\in m'\times n'\\
t(i,j) &\textrm{if }(i,j)\in m'\times \{n'\}
\end{cases}
$$
for all $t\in {}^{m'\times (n'+1)}2$. Extend $s'$ to $s'':m'\times (n'+1)\longrightarrow 2$ by setting $s''(i,j)=0$ for all $(i,j)\in m'\times \{n'\}$. It is easy to check that $p''=(s'',g,\pi'')\in D_{\sigma,\ell}$ and $p''\leq p'$.

We will need one last class of dense sets. For any given $\ell\in\omega$, define
$$
D_\ell=\{p\in\PPP:m_p\geq\ell\}.
$$
An easier version of the above argument shows that each $D_\ell$ is in fact dense.

Since $|\II|<\cccc$, the collection of dense sets
$$
\DD=\{D_{\sigma,\ell}:\sigma\in [\II]^{<\omega},\ell\in\omega\}\cup\{D^\dom_d:d\in D\}\cup\{D^\ran_e:e\in E\}\cup\{D_\ell:\ell\in\omega\}
$$
has also size less than $\cccc$. Therefore, by $\mathrm{MA(countable)}$, there exists a $\DD$-generic filter $G\subseteq\PPP$. Define $x_i=\bigcup\{s_p(i,-):p\in G\}$ for every $i\in\omega$. To define $f(y)(i,j)$, for a given $y\in 2^{\omega\times\omega}$ and $(i,j)\in\omega\times\omega$, choose any $p\in G$ such that $(i,j)\in m_p\times n_p$ and set $f(y)(i,j)=\pi_p(y\upharpoonright (m_p\times n_p))(i,j)$.
\end{proof}

\begin{corollary}\label{consanswer} Assume that $\mathrm{MA(countable)}$ holds. Then there exists a non-$G_\delta$ subset $X$ of $2^\omega$ such that $X^\omega$ is countable dense homogeneous. 
\end{corollary}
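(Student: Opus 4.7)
Apply Theorem \ref{cdhpower} to obtain a non-principal ultrafilter $\UU\subseteq 2^\omega$ such that $\UU^\omega$ is countable dense homogeneous, and set $X=\UU$. It remains to verify that $X$ is not a $G_\delta$ subset of $2^\omega$. But any $G_\delta$ subset of $2^\omega$ is Polish, hence Borel, hence in particular analytic and possessing the property of Baire. As observed in Section 2, it follows from the 0-1 Law that no non-principal ultrafilter has the property of Baire (equivalently, no non-principal ultrafilter is analytic). Therefore $\UU$ cannot be $G_\delta$, and $X=\UU$ is the desired example.
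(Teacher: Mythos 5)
Your proof is correct and is exactly the intended argument: the paper leaves the corollary as an immediate consequence of Theorem \ref{cdhpower} together with the Section 2 observation that no non-principal ultrafilter has the property of Baire, which rules out $\UU$ being $G_\delta$. (The parenthetical ``equivalently'' relating the property of Baire to analyticity is loosely worded, but it plays no role in the argument.)
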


\begin{question} Can the assumption that $\mathrm{MA(countable)}$ holds be dropped in Theorem $\ref{cdhpower}$?
\end{question}

\begin{question} Is there an analytic non-$G_\delta$ subset $X$ of $2^\omega$ such that $X^\omega$ is countable dense homogeneous? Co-analytic?
\end{question}

\section{The perfect set property}

\begin{definition}
Let X be a space. We will say that $A\subseteq X$ has the \emph{perfect set property} if $A$ is either countable or it contains a perfect set.
\end{definition}

It is a classical result of descriptive set theory, due to Souslin, that every analytic subset of a Polish space has the perfect set property (see, for example, Theorem 29.1 in \cite{kechris}).

The following is an easy application of Kunen's closed embedding trick.
\begin{theorem}\label{noclosedpsp} There exists an ultrafilter $\UU\subseteq 2^\omega$ with a closed subset of cardinality $\cccc$ that does not have the perfect set property.
\end{theorem}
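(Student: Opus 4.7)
The plan is to apply Kunen's closed embedding trick (Theorem~$\ref{closed}$) with the zero-dimensional space $C$ chosen to be a Bernstein subset of $2^\omega$.

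First I would recall why a Bernstein set $B\subseteq 2^\omega$ is a suitable input. Such a $B$ exists in ZFC by the standard transfinite recursion over the $\cccc$ many perfect subsets of $2^\omega$, picking at each stage a fresh point for $B$ and another for $2^\omega\setminus B$. By the Bernstein-set discussion in the Notation section, $|P\cap B|=\cccc$ for every perfect $P\subseteq 2^\omega$, so in particular $|B|=\cccc$. Moreover $B$ contains no perfect subset of $2^\omega$: any perfect $Q\subseteq B$ would be disjoint from $2^\omega\setminus B$, contradicting the Bernstein property. As a subspace of $2^\omega$, $B$ is certainly zero-dimensional.

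Next I would apply Theorem~$\ref{closed}$ with $C=B$ to obtain a non-principal ultrafilter $\UU\subseteq 2^\omega$ together with a set $C'\subseteq\UU$ that is homeomorphic to $B$ and closed in $\UU$. This $C'$ is the desired witness: it is closed in $\UU$ and $|C'|=|B|=\cccc$. If $Q\subseteq C'$ were perfect in $2^\omega$, then $Q$ would be a compact copy of $2^\omega$, and its image under any homeomorphism $C'\cong B$ would be a compact subset of $B\subseteq 2^\omega$ homeomorphic to $2^\omega$; since compact subsets of $2^\omega$ are automatically closed in $2^\omega$, this image would be a perfect subset of $2^\omega$ contained in $B$, contradicting the Bernstein property of $B$. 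The only verification, namely the transfer of ``contains a perfect subset of $2^\omega$'' across a homeomorphism between two subspaces of $2^\omega$, is immediate from the topological invariance of compactness and of homeomorphism type, so I do not anticipate any substantive obstacle; the result is indeed an easy application of Theorem~$\ref{closed}$.
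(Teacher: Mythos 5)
Your proposal is correct and is exactly the paper's argument: the paper's proof is the one-liner ``Fix a Bernstein set $B$ in $2^\omega$, then apply Theorem~\ref{closed} with $C=B$,'' and your write-up just supplies the routine verifications (that $|B|=\cccc$, that $B$ contains no perfect set, and that these facts transfer to the closed homeomorphic copy inside $\UU$). No gaps.
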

\begin{proof}
Fix a Bernstein set $B$ in $2^\omega$, then apply Theorem $\ref{closed}$ with $C=B$.
\end{proof}

Next, we will consistently construct a non-principal ultrafilter $\UU$ such that every closed subset of $\UU$ has the perfect set property. Actually, we will get a much stronger result (see Theorem $\ref{analyticpsp}$).

Recall that a play of the \emph{strong Choquet game} on a topological space $(X,\TT)$ is of the form
\begin{center}
\begin{tabular}{cccccl}
I & $(q_0,U_0)$ & & $(q_1,U_1)$ & & $\cdots$  \\ \hline
II & & $V_0$ & & $V_1$ & $\cdots$,
\end{tabular}
\end{center}
where $U_n,V_n\in\TT$ are such that $q_n\in V_n\subseteq U_n$ and $U_{n+1}\subseteq V_n$ for every $n\in\omega$. Player II wins if $\bigcap_{n\in\omega} U_n\neq\varnothing$. The topological space $(X,\TT)$ is \emph{strong Choquet} if II has a winning strategy in the above game. See Section 8.D in \cite{kechris}.

Define an \emph{$\mathrm{A}$-triple} to be a triple of the form $(\TT,A,Q)$ such that the following conditions are satisfied.
\begin{itemize}
\item $\TT$ is a strong Choquet, second-countable topology on $2^\omega$ that is finer than the standard topology.
\item $A\in\TT$.
\item $Q$ is a non-empty countable subset of $A$ with no isolated points in the subspace topology it inherits from $\TT$.
\end{itemize}
By Theorem 25.18 in \cite{kechris}, for every analytic $A$ there exists a topology $\TT$ as above. Also, by Exercise 25.19 in \cite{kechris}, such a topology $\TT$ necessarily consists only of analytic sets. In particular, all $\mathrm{A}$-triples can be enumerated in type $\cccc$.

\begin{theorem}\label{analyticpsp} Assume that $\mathrm{MA(countable)}$ holds. Then there exists a non-principal ultrafilter $\UU\subseteq 2^\omega$ such that $A\cap\UU$ has the perfect set property for every analytic $A\subseteq 2^\omega$.
\end{theorem}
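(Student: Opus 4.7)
The plan is to mirror the structure of Theorem~\ref{cb}. Enumerate the $\mathrm{A}$-triples as $\{(\TT_\eta,A_\eta,Q_\eta):\eta<\cccc\}$ and construct $\FF_\xi\subseteq\PP(\omega)$ by transfinite recursion of length $\cccc$, preserving the finite intersection property together with $|\FF_\xi|<\cccc$, starting from $\FF_0=\FR$ and taking unions at limit stages. Let $\UU$ be any ultrafilter extending $\bigcup_{\xi<\cccc}\FF_\xi$. At the successor stage $\xi=\eta+1$ we treat $(\TT_\eta,A_\eta,Q_\eta)$ exactly parallel to Theorem~\ref{cb}: if $\FF_\eta\cup\{\omega\setminus q\}$ has the FIP for some $q\in Q_\eta$, we adjoin $\omega\setminus q$ (forcing $q\notin\UU$ and hence $Q_\eta\not\subseteq\UU$); otherwise $Q_\eta\subseteq\langle\FF_\eta\rangle$, and a key lemma supplies some $x\in A_\eta\cap\cl_{\TT_\eta}(Q_\eta)\setminus Q_\eta$ with $\FF_\eta\cup\{x\}$ still FIP, which we adjoin. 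This maintains the invariant that for every $\mathrm{A}$-triple $(\TT,A,Q)$ with $Q\subseteq\UU$, there exists $x\in(A\cap\UU)\cap\cl_\TT(Q)\setminus Q$.

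The key lemma, the analogue of Lemma~\ref{keycb}, asserts that under $\mathrm{MA(countable)}$, given an FIP family $\FF$ with $|\FF|<\cccc$ and an $\mathrm{A}$-triple $(\TT,A,Q)$ with $Q\subseteq\langle\FF\rangle$, there exists $x\in A\cap\cl_\TT(Q)\setminus Q$ with $\FF\cup\{x\}$ FIP. Its proof designs a countable poset $\PPP$ whose conditions carry two pieces of data at once: a finite initial segment $s\in\ct$ (for the standard-topology FIP bookkeeping) and a finite partial play of the strong Choquet game on $(2^\omega,\TT)$ in which player~II follows a fixed winning strategy (to force the generic $x$ to land in $A$ under the finer topology $\TT$). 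A typical condition has the form $\langle(p_j,V_j,W_j)_{j\leq k},s\rangle$, where the $W_j\subseteq A$ are player~II's $\TT$-basic responses and the compatibility $[s]\cap W_k\cap Q\neq\varnothing$ holds. Three families of dense sets suffice: $D_{\sigma,\ell}$ for $\sigma\in[\FF]^{<\omega}$ and $\ell\in\omega$ enforcing FIP genericity (dense because $Q\subseteq\langle\FF\rangle$ lets us extend $s$ by following some $q\in W_k\cap Q$); $D_q$ for $q\in Q$ forcing the generic to differ from $q$ (dense by the $\TT$-crowdedness of $Q$ inside $W_k$); and $D_n$ advancing both the game and the length of $s$. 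Since $|\DD|<\cccc$, $\mathrm{MA(countable)}$ yields a $\DD$-generic filter whose associated $x=\bigcup s$ lies in $\bigcap_k W_k\subseteq A$ by the winning strategy, has $\FF\cup\{x\}$ with FIP, avoids $Q$, and lies in $\cl_\TT(Q)$ because each $W_k$ meets $Q$.

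For the final verification, fix an analytic $A$ with $A\cap\UU$ uncountable and a strong Choquet topology $\TT$ with $A\in\TT$. A condensation-point argument in the Polish space $(A,\TT|_A)$ yields a countable $\TT$-crowded $Q_0\subseteq A\cap\UU$, and the invariant, applied iteratively, furnishes new points of $A\cap\UU$ arbitrarily $\TT$-close to any countable $\TT$-crowded $Q\subseteq A\cap\UU$ already produced. Splicing these extensions into a Cantor scheme of shrinking pairwise-disjoint $\TT$-basic neighbourhoods, each containing a chosen representative in $A\cap\UU$, produces a $\TT$-perfect subset of $A\cap\UU$ inside $(A,\TT|_A)$; as the identity map $(A,\TT|_A)\to(A,\mathrm{std}|_A)$ is continuous, this set is $\TT$-compact hence standard-compact, and $\TT$-crowded hence standard-crowded, so it is a perfect subset of $2^\omega$ lying in $A\cap\UU$. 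The principal obstacle is the dual-purpose design of $\PPP$ in the key lemma---keeping the standard-topology approximation $s$ compatible at every stage with the $\TT$-basic region returned by the strong Choquet strategy---and arranging the final Cantor scheme so that its branches in $A$ still lie in $\UU$, which is secured by feeding the candidate branch points into the enumeration of $\mathrm{A}$-triples so that they too get absorbed into $\FF$ during the recursion.
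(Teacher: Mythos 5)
Your overall architecture (transfinite recursion over an enumeration of $\mathrm{A}$-triples, a countable poset combining a string $s\in\ct$ with partial plays of the strong Choquet game, $\mathrm{MA(countable)}$ applied to fewer than $\cccc$ dense sets) is in the right spirit, but the core mechanism is wrong, and the gap is not repairable along the lines you sketch. Modelling the successor stage on Theorem~\ref{cb} gives you \emph{one} new point of $A_\eta\cap\UU$ per stage. That is exactly the right currency for the completely Baire property, where you only need to prevent $Q_\eta$ from being closed in $\UU$, but it is the wrong currency for the perfect set property: a perfect set has $\cccc$ points, and \emph{every one of them} must be certified to lie in the ultrafilter. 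Your final verification tries to assemble a Cantor scheme from the points produced by the invariant and then remarks that the branches "are secured by feeding the candidate branch points into the enumeration of $\mathrm{A}$-triples." This is circular: the Cantor scheme only exists after the recursion is finished (it is built from points of $A\cap\UU$), so its continuum many branch points cannot be fed back into the recursion; and even setting the circularity aside, there is no reason an arbitrary family of $\cccc$ limit points should be simultaneously FIP-compatible with the filter, so they cannot all be adjoined.

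The paper's proof hinges on an idea absent from your proposal: at the relevant successor stage it constructs, in a single application of the key lemma, an entire perfect set $P\subseteq A_\eta$ together with the guarantee that $\bigcap P$ is infinite and that $\FF_\eta\cup\{\bigcap P\}$ has the finite intersection property. Adjoining the single set $x=\bigcap P$ to the filter then places \emph{all} of $P$ inside $\UU$ at once, since $P\subseteq x\!\uparrow$ and filters are upward closed. Arranging this is the real work: the poset's conditions are functions $p:{}^{\leq n}2\to Q\times\BB$ indexing a whole binary tree of partial Choquet plays (with pairwise disjointness and refinement of standard clopen partitions to make the limit map a standard-topology embedding), and the dense sets $D_{\sigma,\ell}$ force infinitely many coordinates $i$ at which every point of every $U^p_s$ takes the value $1$ along with $x_1,\dots,x_k\in\FF$ --- this is what makes $\bigcap P$ infinite and compatible with $\FF$. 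A generic perfect subset of $A$ would have $\bigcap P$ finite, so this coordination between the Cantor scheme and the filter cannot be postponed to the end; it has to be built into the poset. Your single-play poset cannot produce it.
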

\begin{proof}
Enumerate as $\{(\TT_\eta, A_\eta, Q_\eta):\eta\in\cccc\}$ all $\mathrm{A}$-triples, making sure that each triple appears cofinally often. Also, enumerate as $\{z_\eta:\eta\in\cccc\}$ all subsets of $\omega$.

We will construct $\FF_\xi$ for every $\xi\in\cccc$ by transfinite recursion. By induction, we will make sure that the following requirements are satisfied.
\begin{enumerate}
\item $\FF_\mu\subseteq\FF_\eta$ whenever $\mu\leq\eta<\cccc$.
\item\label{fippsp} $\FF_\xi$ has the finite intersection property for every $\xi\in\cccc$.
\item $|\FF_\xi|<\cccc$ for every $\xi\in\cccc$.
\item\label{ultra} By stage $\xi=\eta+1$, we must have decided whether $z_\eta\in\UU$: that is, $z_\eta^\varepsilon\in\FF_\xi$ for some $\varepsilon\in 2$.
\item\label{perfectsubset} If $Q_\eta\subseteq\FF_\eta$ then, at stage $\xi=\eta+1$, we will deal with $A_\eta$: that is, there exists $x\in\FF_\xi$ such that $x\!\uparrow\cap A_\eta$ contains a perfect subset.
\end{enumerate}
In the end, let $\UU=\bigcup_{\xi\in\cccc}\FF_\xi$. Notice that $\UU$ will be an ultrafilter by ($\ref{ultra}$).

Start by letting $\FF_0=\FR$. Take unions at limit stages. At a successor stage $\xi=\eta+1$, assume that $\FF_\eta$ is given. First assume that $Q_\eta\nsubseteq\FF_\eta$. In this case, simply set $\FF_\xi=\FF_\eta\cup\{z_\eta^\varepsilon\}$ for a choice of $\varepsilon\in 2$ that is compatible with condition $(\ref{fippsp})$.

Now assume that $Q_\eta\subseteq\FF_\eta$. Apply Lemma $\ref{keypsp}$ with $\FF=\FF_\eta$, $A=A_\eta$, $Q=Q_\eta$ and $\TT=\TT_\eta$ to get a perfect set $P\subseteq A$ such that $\FF_\eta\cup\{\bigcap P\}$ has the finite intersection property. Let $x=\bigcap P$. Set $\FF_\xi=\FF_\eta\cup\{x,z_\eta^\varepsilon\}$, for some $\varepsilon\in 2$ compatible with condition $(\ref{fippsp})$.

Finally, we will check that $\UU$ has the required property. Assume that $A$ is an analytic subset of $2^\omega$ such that $A\cap\UU$ is uncountable. By Theorem 25.18 in \cite{kechris}, there exists a second-countable, strong Choquet topology $\TT$ on $2^\omega$ that is finer than the standard topology and contains $A$. Since every second countable, uncountable Hausdorff space contains a non-empty countable subspace with no isolated points, we can find such a subspace $Q\subseteq A\cap\UU$. Since $\cf(\cccc)>\omega$, there exists $\mu\in\cccc$ such that $Q\subseteq\FF_\mu$. Since we listed each $\mathrm{A}$-triple cofinally often, there exists $\eta\geq\mu$ such that $(\TT,A,Q)=(\TT_\eta,A_\eta,Q_\eta)$. Condition ($\ref{perfectsubset}$) guarantees that $\UU\cap A$ will contain a perfect subset.
\end{proof}

\begin{lemma}\label{keypsp} Assume that $\mathrm{MA(countable)}$ holds. Let $\FF$ be a collection of subsets of $\omega$ with the finite intersection property such that $|\FF|<\cccc$. Suppose that $(\TT,A,Q)$ is an $\mathrm{A}$-triple with $Q\subseteq\FF$. Then there exists a perfect subset $P$ of $A$ such that $\FF\cup\{\bigcap P\}$ has the finite intersection property.
\end{lemma}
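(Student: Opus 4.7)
The plan is to build a countable forcing poset $\PPP$ whose generic filter defines a perfect set $P\subseteq A$ together with a set $c\subseteq\bigcap P$ meeting every $\bigcap\sigma$ ($\sigma\in[\FF]^{<\omega}$) infinitely often. Since $(\TT,A,Q)$ is an $\mathrm{A}$-triple, Theorem 25.18 in \cite{kechris} makes $(2^\omega,\TT)$ Polish; I fix a compatible complete metric $d$ and a countable base $\BB$ of $\TT$. A condition $p$ will be a triple $(n_p,(U_s^p)_{s\in{}^{\leq n_p}2},c_p)$ with $n_p\in\omega$, $c_p\in[\omega]^{<\omega}$, and $U_s^p\in\BB$, satisfying the standard Cantor-scheme conditions in $\TT$ inside $A$ ($U_\varnothing^p\subseteq A$, $\cl_\TT(U_{s\frown i}^p)\subseteq U_s^p$, $U_{s\frown 0}^p\cap U_{s\frown 1}^p=\varnothing$, and $\mathrm{diam}_d(U_s^p)<2^{-|s|}$ for $|s|\geq 1$), together with the extra demands that each leaf $U_s^p$ (with $s\in{}^{n_p}2$) meet $Q$ and be contained in $\{x\in 2^\omega:x(i)=1\text{ for all }i\in c_p\}$, the latter being $\TT$-open since $\TT$ refines the standard topology. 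The order $q\leq p$ is $n_q\geq n_p$, $c_q\supseteq c_p$, and $U_s^q\subseteq U_s^p$ for all $s\in{}^{\leq n_p}2$. This poset is countable.

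Two families of dense sets, of total size less than $\cccc$, suffice. For each $\ell\in\omega$, $\{p:n_p\geq\ell\}$ is dense: since $Q$ has no isolated points in $\TT$, each current leaf contains two distinct points of $Q$ that can be separated by disjoint small basic opens (using regularity of Polish $\TT$), giving a one-level-deeper condition. For each $\sigma\in[\FF]^{<\omega}$ and $\ell\in\omega$, the set $D_{\sigma,\ell}=\{p:c_p\cap\bigcap\sigma\cap[\ell,\omega)\neq\varnothing\}$ is dense: choose a representative $q_s\in U_s^p\cap Q$ for each of the finitely many leaves $s\in{}^{n_p}2$; since $\sigma\cup\{q_s:s\in{}^{n_p}2\}\subseteq\FF$ has the finite intersection property, pick some $i\geq\ell$ in $\bigcap\sigma\cap\bigcap_s q_s$; then $U_s^p\cap\{x:x(i)=1\}$ is a $\TT$-open neighborhood of $q_s$, so shrink each $U_s^p$ to a basic open $V_s$ around $q_s$ inside it and adjoin $i$ to $c_p$.

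Applying $\mathrm{MA(countable)}$ yields a generic $G\subseteq\PPP$. By completeness of $d$ and diameter shrinking, each $b\in 2^\omega$ determines a unique $x_b\in\bigcap_{p\in G}\cl_\TT(U_{b\upharpoonright n_p}^p)\subseteq A$, and the map $b\mapsto x_b$ is a continuous injection (injective by sibling-disjointness), so $P=\{x_b:b\in 2^\omega\}$ is a perfect subset of $A\subseteq 2^\omega$. Writing $c=\bigcup_{p\in G}c_p$, each $i\in c$ forces $x_b(i)=1$ for every $b$, so $c\subseteq\bigcap P$; the density of the sets $D_{\sigma,\ell}$ then ensures $c\cap\bigcap\sigma$ is cofinal in $\omega$ for every $\sigma\in[\FF]^{<\omega}$, and hence $\FF\cup\{\bigcap P\}$ has the finite intersection property. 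The main obstacle is the density of $D_{\sigma,\ell}$: one must coordinate the choice of the "committed" index $i$ across all current leaves via a single FIP argument exploiting $Q\subseteq\FF$, and then shrink the Cantor scheme in $\TT$ in a way that simultaneously preserves the $Q$-meeting property of leaves and the nesting/diameter control.
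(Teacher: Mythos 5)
There is a genuine gap at the very first step: you assert that Theorem 25.18 of Kechris makes $(2^\omega,\TT)$ Polish and then fix a compatible complete metric $d$, using its completeness for the nonemptiness of $\bigcap_n U^p_{b\upharpoonright n}$ and its regularity for the closure-nesting $\cl_\TT(U^p_{s^\frown i})\subseteq U^p_s$. But an $\mathrm{A}$-triple only provides a \emph{second-countable, strong Choquet} topology $\TT$ finer than the standard one; it is not metrizable (or even regular) in general. Indeed, by Exercise 25.19 in \cite{kechris} every $\TT$-open set is analytic, so if $\TT$ were Polish the identity onto $(2^\omega,\textrm{standard})$ would be a continuous bijection of Polish spaces and every $\TT$-open set would be Borel; since the lemma must be applied to analytic non-Borel $A\in\TT$ (that is the whole point of Theorem $\ref{analyticpsp}$), $\TT$ cannot be Polish. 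This is exactly why the paper flags $(2^\omega,\TT)$ as the unique non-metrizable space in Section 6. With $d$ gone, your Cantor scheme has no reason to have nonempty intersections along branches, and your closure condition cannot even be arranged.

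The repair is the route the paper takes: make the points $q^p_s\in Q$ part of the condition and require each branch of the finite tree to be a partial play of the strong Choquet game in $(2^\omega,\TT)$ in which II follows a fixed winning strategy $\Sigma$; the strategy then guarantees $\bigcap_n U_{x\upharpoonright n}\neq\varnothing$ for every generic branch. In place of diameter control, one adds dense sets $D^{\mathrm{ref}}_\ell$ forcing the leaf sets to refine the standard clopen partition $\{[s]:s\in{}^{\ell}2\}$; these simultaneously make each branch intersection a singleton and make the induced map $2^\omega\longrightarrow A$ continuous in the standard topology (hence an embedding by compactness). Your remaining ingredients --- sibling disjointness for injectivity, the $D_{\sigma,\ell}$ density argument using $Q\subseteq\FF$ and the finite intersection property to commit an index $i\geq\ell$ with $x(i)=x_1(i)=\cdots=x_k(i)=1$ on all leaves --- are essentially the paper's and are fine once the Choquet-game scaffolding replaces the metric one.
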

\begin{proof}
Fix a winning strategy $\Sigma$ for player II in the strong Choquet game in $(2^\omega,\TT)$. Also, fix a countable base $\BB$ for $(2^\omega,\TT)$. Let $\PPP$ be the countable poset consisting of all functions $p$ such that, for some $n=n_p\in\omega$, the following conditions hold.
\begin{enumerate}
\item $p:{}^{\leq n}2\longrightarrow Q\times\BB$. We will use the notation $p(s)=(q_s^p,U_s^p)$.
\item $U_\varnothing^p=A$.
\item\label{disjointness} For every $s,t\in{}^{\leq n}2$, if $s$ and $t$ are incompatible (that is, $s\nsubseteq t$ and $t\nsubseteq s$) then $U^p_s\cap U^p_t=\varnothing$.
\item For every $s\in{}^{n}2$,
\begin{center}
\begin{tabular}{cccccccc}
I & $(q^p_{s\upharpoonright 0},U^p_{s\upharpoonright 0})$ & & $(q^p_{s\upharpoonright 1},U^p_{s\upharpoonright 1})$ & & $\cdots$ & $(q^p_{s\upharpoonright n},U^p_{s\upharpoonright n})$ & \\ \hline
II & & $V^p_{s\upharpoonright 0}$ & & $V^p_{s\upharpoonright 1}$ & $\cdots$ & & $V^p_{s\upharpoonright n}$
\end{tabular}
\end{center}
is a partial play of the strong Choquet game in $(2^\omega,\TT)$, where the open sets $V^p_{s\upharpoonright i}$ played by II are the ones dictated by the strategy $\Sigma$.
\end{enumerate}
Order $\PPP$ by setting $p\leq p'$ whenever $p\supseteq p'$.

For every $\ell\in\omega$, define
$$
D_\ell=\{p\in\PPP:n_p\geq\ell\}.
$$
Since $Q$ has no isolated points and $\TT$ is Hausdorff, it is easy to see that each $D_\ell$ is dense.

For any fixed $\ell\in\omega$, consider the partition of $2^\omega$ in clopen sets $\PP_\ell=\{[s]:s\in{}^{\ell}2\}$, then define
$$
D^\textrm{ref}_\ell=\{p\in\PPP: \{U_s^p:s\in {}^{n_p}2\}\textrm{ refines }\PP_\ell\}.
$$
Let us check that each $D^\textrm{ref}_\ell$ is dense. Given $p\in\PPP$ and $\ell\in\omega$, let $n=n_p$ and $q^0_s=q^p_s$ for every $s\in{}^{n}2$. Since $Q$ has no isolated points, it is possible, for every $s\in{}^{n}2$, to choose $q^1_s\neq q^0_s$ such that $q^1_s\in V^p_s\cap Q$. Then choose $j\geq\ell$ big enough so that $[q^1_s\upharpoonright j]\cap[q^0_s\upharpoonright j]=\varnothing$ for every $s\in{}^{n}2$. Now simply extend $p$ to a condition $p':{}^{\leq n+1}2\longrightarrow Q\times\BB$ by defining $p'(s^\frown\varepsilon)=(q^\varepsilon_s,U^\varepsilon_s)$ for every $s\in{}^{n}2$ and $\varepsilon\in 2$, where each $U^\varepsilon_s\in\BB$ is such that $q^\varepsilon_s\in U^\varepsilon_s\subseteq V^p_s\cap [q^\varepsilon_s\upharpoonright j]$. It is easy to realize that $p'\in D^\textrm{ref}_\ell$.

For any fixed $\sigma=\{x_1,\ldots,x_k\}\in [\FF]^{<\omega}$ and $\ell\in\omega$, define
$$
D_{\sigma,\ell}=\{p\in\PPP:\textrm{there exists }i\in\omega\setminus\ell\textrm{ such that }
$$
$$
x(i)=x_1(i)=\cdots=x_k(i)=1\textrm{ for all }x\in U^p_s\textrm{ for all }s\in{}^{n_p}2\}.
$$
Let us check that each $D_{\sigma,\ell}$ is dense. Given $p\in\PPP$, $\sigma$ and $\ell$ as above, let $n=n_p$ and $q^0_s=q^p_s$ for every $s\in{}^{n}2$. Notice that
$$
\bigcap_{s\in{}^{n}2}q^p_s\cap\bigcap\sigma
$$
is an infinite subset of $\omega$, because $Q\subseteq\FF$ by assumption. So there exists $i\in\omega$ with $i\geq\ell$ such that 
$$
q^p_s(i)=x_1(i)=\cdots=x_k(i)=1
$$ 
for every $s\in{}^{n}2$. Since $Q$ has no isolated points, it is possible, for every $s\in{}^{n}2$, to choose $q^1_s\neq q^0_s$ such that $q^1_s\in V^p_s\cap [q^p_s\upharpoonright (i+1)]\cap Q$. Then choose $j\geq i+1$ big enough so that $[q^1_s\upharpoonright j]\cap[q^0_s\upharpoonright j]=\varnothing$ for every $s\in{}^{n}2$. Now simply extend $p$ to a condition $p':{}^{\leq n+1}2\longrightarrow Q\times\BB$ by defining $p'(s^\frown\varepsilon)=(q^\varepsilon_s,U^\varepsilon_s)$ for every $s\in{}^{n}2$ and $\varepsilon\in 2$, where each $U^\varepsilon_s\in\BB$ is such that $q^\varepsilon_s\in U^\varepsilon_s\subseteq V^p_s\cap [q^\varepsilon_s\upharpoonright j]$. It is easy to realize that $p'\in D_{\sigma,\ell}$.

Since $|\FF|<\cccc$, the collection of dense sets
$$
\DD=\{D_\ell:\ell\in\omega\}\cup\{D^\textrm{ref}_\ell:\ell\in\omega\}\cup\{D_{\sigma,\ell}:\sigma\in [\FF]^{<\omega},\ell\in\omega\}
$$
has also size less than $\cccc$. Therefore, by $\mathrm{MA(countable)}$, there exists a $\DD$-generic filter $G\subseteq\PPP$. Let $g=\bigcup G:\ct\longrightarrow Q\times\BB$. Given $s\in\ct$, pick any $p\in G$ such that $s\in\dom(p)$ and set $U_s=U^p_s$. For any $x\in 2^\omega$, since $\Sigma$ is a winning strategy for II, we must have $\bigcap_{n\in\omega}U_{x\upharpoonright n}\neq\varnothing$. Using the dense sets $D^\textrm{ref}_\ell$, one can easily show that such intersection is actually a singleton. Therefore, letting $f(x)$ be the unique element of $\bigcap_{n\in\omega}U_{x\upharpoonright n}$ yields a well-defined function $f:2^\omega\longrightarrow A$. Using condition ($\ref{disjointness}$) in the definition of $\PPP$, one sees that $f$ is injective. 

Next, we will show that $f$ is continuous in the standard topology, hence a homemorphic embedding by compactness. Fix $x\in 2^\omega$ and let $y=f(x)$. Fix $\ell\in\omega$. Since $G$ is a $\DD$-generic filter, there must be $p\in D^\textrm{ref}_\ell\cap G$. Let $n=n_p$. Notice that this implies $U_{x\upharpoonright n}=U^p_{x\upharpoonright n}\subseteq [y\upharpoonright\ell]$, hence $f(x')\in [y\upharpoonright\ell]$ whenever $x'\in [x\upharpoonright n]$.

Therefore $P=\ran(f)$ is a perfect subset of $A$. Finally, using the dense sets $D_{\sigma,\ell}$ one can show that $\FF\cup\{\bigcap P\}$ has the finite intersection property.
\end{proof}

\begin{corollary}\label{closedpsp} Assume that $\mathrm{MA(countable)}$ holds. Then there exists a non-principal ultrafilter $\UU\subseteq 2^\omega$ such that every closed subset of $\UU$ has the perfect set property.
\end{corollary}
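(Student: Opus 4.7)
The plan is to read Corollary \ref{closedpsp} as an immediate weakening of Theorem \ref{analyticpsp}, which has been established just above. Concretely, I would take $\UU$ to be exactly the ultrafilter produced by Theorem \ref{analyticpsp}, i.e., one for which $A\cap\UU$ has the perfect set property for every analytic $A\subseteq 2^\omega$, and then check that this same $\UU$ already witnesses the conclusion of the corollary.

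To do this, let $F$ be an arbitrary closed subset of the subspace $\UU\subseteq 2^\omega$. By definition of the subspace topology, $F=C\cap\UU$ for some set $C$ closed in $2^\omega$. Since closed subsets of the Polish space $2^\omega$ are Borel and hence analytic, applying Theorem \ref{analyticpsp} with $A=C$ tells me at once that $F=C\cap\UU$ is either countable or contains a perfect set $P\subseteq 2^\omega$ (a subset of $2^\omega$ that is closed in $2^\omega$ and has no isolated points).

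The one small point that needs to be verified, rather than invoked, is that such a $P$ is also perfect in the sense of Section 6 when viewed as a subset of $\UU$ (equivalently, of $F$), since \emph{a priori} the definition of the perfect set property in Section 6 refers to perfect sets in the ambient space. This is routine: $P$ is closed in $2^\omega$ and contained in $\UU$, so $P=P\cap\UU$ is closed in $\UU$ (and hence closed in $F$); and absence of isolated points is an intrinsic property of $P$ as a topological space, unaffected by whether one regards $P$ as sitting in $2^\omega$, in $\UU$, or in $F$. So $P$ really is a perfect subset of $F$, and $F$ has the perfect set property. I do not foresee any genuine obstacle here; the real work has already been done inside the proofs of Theorem \ref{analyticpsp} and Lemma \ref{keypsp}, and the corollary is essentially a packaging step that restricts attention from analytic sets to closed ones.
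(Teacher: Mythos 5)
Your proposal is correct and is exactly the intended derivation: the corollary is an immediate consequence of Theorem $\ref{analyticpsp}$, since every closed subset of $\UU$ has the form $C\cap\UU$ with $C$ closed (hence analytic) in $2^\omega$. Your extra check that a perfect set in $2^\omega$ contained in $C\cap\UU$ is also perfect relative to the subspace is a sensible (and harmless) bit of bookkeeping that the paper leaves implicit.
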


\begin{question} Can the assumption that $\mathrm{MA(countable)}$ holds be dropped in Theorem $\ref{analyticpsp}$?
\end{question}

Observe that if $Q\subseteq 2^\omega$ is homeomorphic to $\QQQ$ in the standard topology, $A=\cl(Q)$ and $\TT_A$ is the topology obtained by declaring $A$ open, then $(\TT_A,Q,A)$ is an $\mathrm{A}$-triple because $\TT_A$ is Polish (see Lemma 13.2 in \cite{kechris}). It follows easily that the ultrafilter constructed in Theorem $\ref{analyticpsp}$ cannot contain closed copies of the rationals, hence it is completely Baire by Lemma $\ref{hurewicz}$.

\begin{question} Is an ultrafilter $\UU\subseteq 2^\omega$ such that $A\cap\UU$ has the perfect set property whenever $A$ is an analytic subset of $2^\omega$ necessarily completely Baire?
\end{question}

We also remark that if $\Gamma\subseteq\PP\bigl(2^\omega\bigr)$ is closed under $c$ and $\UU$ is such that $A\cap\UU$ has the perfect set property for all $A\in\Gamma$, then $A\setminus\UU$ has the perfect set property for all $A\in\Gamma$.

\section{Extending the perfect set property}

Assuming $V=L$, there exists an uncountable co-analytic set $A$ that does not contain any perfect set (see Theorem 25.37 in \cite{jech}). It follows that $\mathrm{MA(countable)}$ is not enough to extend Theorem $\ref{analyticpsp}$ to all co-analytic sets. This section is devoted to attaining a positive result for the co-analytic case. Actually, we will obtain a much stronger result (see Theorem \ref{lrpspu}). We will need a modest large cardinal assumption, a larger fragment of MA, and the negation of CH.

\begin{lemma}\label{pal2psp}
Assume that $\UU\subseteq 2^\omega$ is a $\mathrm{P}_{\omega_2}$-point. If $A\subseteq 2^{\omega}$ is such that every closed subspace of $A$ has the perfect set property, then $A\cap\UU$ has the perfect set property.
\end{lemma}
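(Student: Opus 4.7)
The plan is to prove the non-trivial direction contrapositively: assuming $A\cap\UU$ is uncountable, I will produce a perfect subset of $A\cap\UU$. The guiding idea is to consolidate many individual elements of $\UU$ into a single $y'\in\UU$ whose upward cone $y'\!\uparrow$ is, on the one hand, automatically contained in $\UU$ by upward closure and, on the other, still captures uncountably many points of $A$. The closed-subspace hypothesis on $A$ will then do the rest.

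Concretely, I would first enumerate an $\omega_1$-sized subfamily $\{x_\alpha:\alpha<\omega_1\}\subseteq A\cap\UU$ of distinct elements. Since $\UU$ is a $\mathrm{P}_{\omega_2}$-point and $\omega_1<\omega_2$, this family admits a pseudointersection $y\in\UU$, so each $F_\alpha:=y\setminus x_\alpha$ is a finite subset of $\omega$. Because $[\omega]^{<\omega}$ is countable, a pigeonhole argument yields one finite set $F\subseteq\omega$ and an uncountable $S\subseteq\omega_1$ with $F_\alpha=F$ for every $\alpha\in S$. Setting $y':=y\setminus F$, non-principality of $\UU$ ensures $y'\in\UU$, and by construction $y'\subseteq x_\alpha$, i.e.\ $x_\alpha\in y'\!\uparrow$, for every $\alpha\in S$.

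Now $y'\!\uparrow$ is closed in $2^\omega$ and, by upward closure of $\UU$ together with $y'\in\UU$, is entirely contained in $\UU$. Hence $C:=A\cap y'\!\uparrow$ is a closed subspace of $A$ that sits inside $A\cap\UU$ and has cardinality at least $\omega_1$, since it contains $\{x_\alpha:\alpha\in S\}$. The hypothesis on $A$ then forces $C$ to contain a perfect set $P$, and $P\subseteq C\subseteq A\cap\UU$ is the desired perfect subset.

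I do not anticipate any genuine obstacle: the $\mathrm{P}_{\omega_2}$-point assumption is tailored precisely to collapse $\omega_1$ elements of $\UU$ into a single one, and the pigeonhole step is what upgrades $\subseteq^\ast$ to honest inclusion so that the \emph{closed} set $y'\!\uparrow$ becomes available as a workspace. The only side verification — that $y'$ is co-infinite, so that $y'\!\uparrow$ is actually uncountable — is automatic from the fact that $C$ already contains $\omega_1$ distinct points.
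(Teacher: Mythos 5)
Your argument is correct and is essentially the paper's proof: take an $\omega_1$-sized subset of $A\cap\UU$, use the $\mathrm{P}_{\omega_2}$-point property to get a pseudointersection in $\UU$, pigeonhole to upgrade $\subseteq^\ast$ to genuine inclusion above some cofinite modification, and apply the closed-subspace hypothesis to the resulting uncountable closed piece of $A$ inside the upward cone. The only cosmetic difference is that the paper pigeonholes on the $n$ with $y\setminus x_\alpha\subseteq n$ rather than on the finite set $y\setminus x_\alpha$ itself.
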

\begin{proof}
Let $A$ be as above, and assume that $A\cap\UU$ is uncountable.
Choose $B\subseteq A\cap\UU$ such that $|B|=\omega_1$. Since $\UU$ is a $\mathrm{P}_{\omega_2}$-point, there is a pseudointersection $x$ of $B$ in $\UU$. For some $n\in\omega$, uncountably many elements of $B$ are in the closed set $C=(x\setminus n)\!\uparrow$. By hypothesis, $A\cap C$ contains a perfect set $P$. We now have $A\cap\UU\supseteq P$ as desired.
Thus, $A\cap\UU$ has the perfect set property.
\end{proof}

It is not hard to verify that the hypothesis on $A$ in the above lemma is optimal. Let $x_0$ and $x_1$ be complementary infinite subsets of $\omega$. Identify each $\PP(x_i)$ with the perfect set $\{x\in 2^\omega: x(n)=0\textrm{ for all }n\in x_{1-i}\}$. Fix a Bernstein subset $B_i$ of $\PP(x_i)$ and set $A_i=B_i\cup\PP(x_{1-i})$ for each $i\in 2$. Each $A_i$ has the perfect set property. However, if $\UU\subseteq 2^\omega$ is an ultrafilter, then some $A_i\cap\UU$ lacks the perfect set property. Indeed, if $x_i\in\UU$, then $y\in\UU$ for some subset $y\subseteq x_i$ such that $x_i\setminus y$ is infinite. The perfect set $y\!\uparrow\cap\PP(x_i)$ contains $\cccc$ many elements of $B_i$, so $A_i\cap\UU$ has size $\cccc$ as well. However, $A_i\cap\UU\subseteq B_i$, so $A_i\cap\UU$ does not contain a perfect set.

The following lemma is essentially due to Ihoda (Judah) and Shelah (see Theorem 3.1 in~\cite{ihodashelah}). Given a class $\Gamma$, we define $\mathrm{PSP}(\Gamma)$ to mean that every $X\in\Gamma\cap\PP(2^\omega)$ has the perfect set property.

\begin{lemma}\label{mahloma}
The existence of a Mahlo cardinal is equiconsistent with
$$\mathrm{MA(}\sigma\text{-}\mathrm{centered)}+\neg\mathrm{CH}+\mathrm{PSP}(L(\RRR)).$$
\end{lemma}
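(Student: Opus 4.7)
The plan is to sketch both directions of this equiconsistency, essentially reproducing Theorem 3.1 of~\cite{ihodashelah}.

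For the upper bound, the plan is to start with a Mahlo cardinal $\kappa$ in the ground model $V$, first L\'evy-collapse $\kappa$ to $\aleph_1$, and then over the resulting intermediate model perform a finite-support iteration $\langle\PPP_\alpha,\dot Q_\alpha:\alpha<\kappa^{++}\rangle$ of $\sigma$-centered forcings, with a bookkeeping scheme that deals with every $\sigma$-centered poset of size less than $\kappa^{++}$. In the final extension, $\mathrm{MA}(\sigma\text{-centered})$ will hold by the bookkeeping, $\mathrm{CH}$ will fail because $\kappa^{++}$ many reals have been added, and $\mathrm{PSP}(L(\RRR))$ will survive because $\sigma$-centered forcing preserves the Solovay-style homogeneity present after the L\'evy collapse: any uncountable $A\in L(\RRR)$ of the final extension is definable from a real $r$ and an ordinal parameter, and a reflection argument below the inaccessibles guessed by the collapse produces a perfect subset of $A$. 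The Mahloness of $\kappa$ is what ensures that enough reflection survives the iteration of length $\kappa^{++}$.

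For the lower bound, assume $V\models \mathrm{MA}(\sigma\text{-centered})+\neg\mathrm{CH}+\mathrm{PSP}(L(\RRR))$, so that by Bell's theorem $\pppp=\cccc\geq\aleph_2$. A Solovay--Specker-style argument, applied to the $L[x]$-least uncountable set of reals with no perfect subset, first yields that $\aleph_1^V$ is inaccessible in $L[x]$ for every real $x$. To promote this to Mahloness of $\aleph_1^V$ in $L$, suppose for contradiction that the set of $L$-inaccessibles below $\aleph_1^V$ is non-stationary in $L$, and fix some $C\in L$ club in $\aleph_1^V$ consisting of $L$-singular ordinals. Using $\pppp\geq\aleph_2$ together with the $L$-definability of $C$, one codes $C$ into an $L(\RRR)$-definable set of reals of size $\aleph_1$ containing no perfect subset, contradicting $\mathrm{PSP}(L(\RRR))$.

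The main obstacle is precisely the Mahlo upgrade in the lower bound: $\mathrm{PSP}(L(\RRR))$ by itself delivers only inaccessibility of $\aleph_1^V$ in $L$, and promoting this to stationarity of the $L$-inaccessibles below $\aleph_1^V$ requires a delicate coding argument that combines the Martin's Axiom fragment with the $L$-fine structure, as carried out in~\cite{ihodashelah}. This is also exactly what forces the use of a Mahlo, rather than merely inaccessible, cardinal in the upper bound, since fewer than stationarily many reflection stages would leave room for an $L$-definable counterexample to PSP.
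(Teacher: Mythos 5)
Your overall strategy coincides with the paper's: both directions are outsourced to the proof of Theorem 3.1 of~\cite{ihodashelah}, with $\mathrm{PSP}(L(\RRR))$ translated back and forth into the statement that $\omega_1^{L[r]}<\omega_1$ for every real $r$. In particular your lower bound is essentially the one in the paper: $\mathrm{PSP}(L(\RRR))$ forces every injection of $\omega_1$ into $2^\omega$ out of $L(\RRR)$, hence $\omega_1$ is inaccessible in every $L[x]$, and the Harrington--Shelah-style almost-disjoint coding of a club of $L$-singulars (using $\mathrm{MA(}\sigma\text{-}\mathrm{centered)}+\neg\mathrm{CH}$) upgrades this to Mahloness; deriving the final contradiction directly from an $L(\RRR)$-definable set of size $\aleph_1$ with no perfect subset, rather than from $\omega_1^{L[r]}<\omega_1$, is an equivalent reformulation.

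The one step where your sketch is materially weaker than the paper's argument is the preservation of $\mathrm{PSP}(L(\RRR))$ in the upper bound. ``$\sigma$-centered forcing preserves the Solovay-style homogeneity'' is not a usable general principle: the lower bound itself exhibits a $\sigma$-centered forcing (almost-disjoint coding along a club of $L$-singulars) that destroys $\mathrm{PSP}(L(\RRR))$ whenever $\omega_1$ fails to be Mahlo in $L$, so preservation cannot be a soft homogeneity fact about the iterands taken one at a time. What actually carries the argument is the absorption lemma (Lemma 1.1 of~\cite{ihodashelah}), which the paper quotes with its three hypotheses: the \emph{entire} forcing must be $\kappa$-cc, collapse $\kappa$ to $\omega_1$, and have the factoring property that every subset of size $<\kappa$ extends to a completely embedded suborder of size $<\kappa$; under these hypotheses $L(\RRR)$ of the extension equals $L(\RRR)$ of a Levy-collapse extension, where $\mathrm{PSP}(L(\RRR))$ holds by the Solovay analysis (Theorem 11.1 of~\cite{kanamori}). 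Verifying the factoring condition for the particular collapse-plus-MA iteration is exactly where the Mahloness of $\kappa$ is spent, and this verification --- rather than a generic ``enough reflection survives'' claim --- is what your sketch would need to supply or cite explicitly. Since you do ultimately defer to Theorem 3.1 of~\cite{ihodashelah} for the construction, the gap is one of identifying the correct mechanism rather than of overall strategy, but as written the preservation step does not stand on its own.
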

\begin{proof}
Any generic extension by the Levy collapse $\mathrm{Col}(\omega,\kappa)$ of an inaccessible cardinal $\kappa$ to $\omega_1$ satisfies $\mathrm{PSP}(L(\RRR))$ (see the proof of Theorem 11.1 in~\cite{kanamori}). By the proof of Lemma 1.1 in~\cite{ihodashelah}, if $\kappa$ is inaccessible and $\PPP$ is a forcing poset that satisfies the following conditions, then every generic extension $V[G]$ of $V$ by $\PPP$ is such that $L(\RRR)^{V[G]}=L(\RRR)^{V[H]}$ for some $V$-generic filter $H\subseteq\mathrm{Col}(\omega,\kappa)$.
\begin{enumerate}
\item\label{kcc} $\PPP$ has the $\kappa$-cc. 
\item\label{komega1} $\PPP$ forces $\kappa=\omega_1$.
\item\label{kfactors} For every $R\subseteq\PPP$ of size less than $\kappa$, there exists $Q\subseteq\PPP$ such that $|Q|<\kappa$, $R\subseteq Q$, and $Q$ is completely embedded in $\PPP$ by the inclusion map.
\end{enumerate}
Assuming that there exists a Mahlo cardinal $\kappa$, the proof of Theorem 3.1 in \cite{ihodashelah} constructs a generic extension $V[G]$ of $V$ by a forcing $\PPP$ such that $V[G]$ satisfies $\mathrm{MA(}\sigma\text{-}\mathrm{centered)}+\neg\mathrm{CH}$, using a forcing poset $\PPP$ that satisfies conditions ($\ref{kcc}$), ($\ref{komega1}$) and ($\ref{kfactors}$). Therefore, $\mathrm{PSP}(L(\RRR))$ also holds in $V[G]$.

Conversely, $\mathrm{PSP}(L(\RRR))$ implies that all
injections of $\omega_1$ into $2^{\omega}$ are outside of $L(\RRR)$,
which in turn implies $\omega_1^{L[r]}<\omega_1$ for all reals $r$.
The proof of Theorem 3.1 in~\cite{ihodashelah} shows that 
if $\mathrm{MA(}\sigma\text{-}\mathrm{centered)}+\neg\mathrm{CH}$ holds and $\omega_1^{L[r]}<\omega_1$ for all reals $r$, then $\omega_1$ is Mahlo in $L$.
\end{proof}

For the convenience of the reader, we include the proof of the following standard lemma.

\begin{lemma}\label{pcpoint}
Assume that $\mathrm{MA(}\sigma\text{-}\mathrm{centered)}$ holds. Then there exists a $\mathrm{P}_\cccc$-point $\UU$.
\end{lemma}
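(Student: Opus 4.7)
The plan is to imitate the transfinite recursions of length $\cccc$ used throughout the paper. By Bell's theorem, $\mathrm{MA(}\sigma\text{-}\mathrm{centered)}$ is equivalent to $\pppp=\cccc$, so $\cccc$ is regular; moreover, the standard almost-disjoint coding argument (whose forcing is $\sigma$-centered) gives $2^\kappa=\cccc$ for all infinite $\kappa<\cccc$. Therefore, we may enumerate as $\{(z_\eta,\CC_\eta):\eta\in\cccc\}$ all pairs $(z,\CC)$ with $z\in\PP(\omega)$ and $\CC\in[\PP(\omega)]^{<\cccc}$, arranging that each pair appears cofinally often.

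Build an increasing chain $\FF_\xi$ ($\xi\in\cccc$) of families with the finite intersection property and $|\FF_\xi|<\cccc$. Start with $\FF_0=\FR$, take unions at limit stages, and at a successor $\xi=\eta+1$ first pick $\varepsilon\in 2$ with $\FF_\eta\cup\{z_\eta^\varepsilon\}$ still satisfying the finite intersection property, then, if $\CC_\eta\subseteq\FF_\eta$, apply the key lemma below to adjoin a pseudointersection $x$ of $\CC_\eta$ preserving the finite intersection property. Let $\UU=\bigcup_{\xi\in\cccc}\FF_\xi$. The first step of each successor makes $\UU$ an ultrafilter; for the $\mathrm{P}_\cccc$-point property, given $\CC\in[\UU]^{<\cccc}$, the regularity of $\cccc$ yields some $\mu$ with $\CC\subseteq\FF_\mu$, and the cofinal enumeration produces $\eta\geq\mu$ with $\CC_\eta=\CC$, at which point the successor stage $\eta+1$ places a pseudointersection of $\CC$ into $\UU$.

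The main obstacle, and really the only content, is the auxiliary lemma: given $\FF$ with the finite intersection property and $|\FF|<\cccc$, and $\CC\subseteq\FF$ with $|\CC|<\cccc$, there exists a pseudointersection $x$ of $\CC$ such that $\FF\cup\{x\}$ has the finite intersection property. I would prove this by the classical Mathias-style forcing. Consider the countable poset $\PPP$ of pairs $p=(s_p,F_p)$ with $s_p\in\FIN$ and $F_p\in[\CC]^{<\omega}$, ordered by $q\leq p$ iff $s_q\supseteq s_p$, $F_q\supseteq F_p$, and $s_q\setminus s_p\subseteq\bigcap F_p$. Conditions sharing the same first coordinate are compatible (just union their $F$'s), so $\PPP$ is $\sigma$-centered. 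The dense sets $\{p:y\in F_p\}$ for $y\in\CC$ force the generic $x=\bigcup\{s_p:p\in G\}$ to be almost contained in every $y\in\CC$, while the dense sets $\{p:|s_p\cap\bigcap\sigma|\geq n\}$ for $\sigma\in[\FF]^{<\omega}$ and $n\in\omega$ (dense because $F_p\cup\sigma\subseteq\FF$ has the finite intersection property, so elements of $\bigcap F_p\cap\bigcap\sigma$ are available to extend $s_p$) force $\FF\cup\{x\}$ to retain the finite intersection property. There are fewer than $\cccc$ such dense sets, so $\mathrm{MA(}\sigma\text{-}\mathrm{centered)}$ delivers a sufficiently generic filter $G$.
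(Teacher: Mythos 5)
Your proof is correct, but it takes a noticeably more laborious route than the paper's. The paper simply invokes $\mathrm{MA(}\sigma\text{-}\mathrm{centered)}\Rightarrow\pppp=\cccc$ at each successor stage to adjoin an infinite pseudointersection of the \emph{entire} family $\FF_\eta$ built so far (such a pseudointersection automatically preserves the finite intersection property, so nothing further needs to be checked); the regularity of $\pppp$ then gives the $\mathrm{P}_\cccc$-point property exactly as in your closing argument. This sidesteps both of your extra ingredients: the enumeration of $[\PP(\omega)]^{<\cccc}$, which forces you to verify $2^{<\cccc}=\cccc$ (true under $\mathrm{MA(}\sigma\text{-}\mathrm{centered)}$, but an additional appeal to almost disjoint coding), and the explicit Mathias-style forcing, which is in effect a re-proof of the relevant fragment of Bell's theorem. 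Your forcing argument itself is sound: the poset is $\sigma$-centered (though not countable when $\CC$ is uncountable, so the word ``countable'' should be dropped), the dense sets are dense for the reasons you give, and there are fewer than $\cccc$ of them. It even yields a slightly more flexible lemma --- a pseudointersection of a prescribed subfamily that remains compatible with a larger family --- but for the statement at hand the direct appeal to $\pppp=\cccc$ is shorter and is what the paper does.
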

\begin{proof}
Enumerate all subsets of $\omega$ as $\{z_\eta:\eta\in\cccc\}$. We will construct $\FF_\xi$ for every $\xi\in\cccc$ by transfinite recursion. By induction, we will make sure that the following requirements are satisfied.
\begin{enumerate}
\item $\FF_\mu\subseteq\FF_\eta$ whenever $\mu\leq\eta<\cccc$.
\item\label{fippc} $\FF_\xi$ has the finite intersection property for every $\xi\in\cccc$.
\item $|\FF_\xi|<\cccc$ for every $\xi\in\cccc$.
\item\label{ultrapc} By stage $\xi=\eta+1$, we must have decided whether $z_\eta\in\UU$: that is, $z_\eta^\varepsilon\in\FF_\xi$ for some $\varepsilon\in 2$.
\item\label{pseudopc} At stage $\xi=\eta+1$, we will make sure that $\FF_\xi$ contains a pseudointersection of $\FF_\eta$.
\end{enumerate}
Start by letting $\FF_0=\FR$. Take unions at limit stages. At a successor stage $\xi=\eta+1$, assume that $\FF_\eta$ is given. 

Since $\mathrm{MA(}\sigma\text{-}\mathrm{centered)}$ implies $\pppp=\cccc$ (see Theorem 7.12 in \cite{blass}), there exists an infinite pseudointersection $x$ of $\FF_\eta$. Now simply set $\FF_\xi=\FF_\eta\cup\{x,z_\eta^\varepsilon\}$ for a choice of $\varepsilon\in 2$ that is compatible with condition $(\ref{fippc})$.

In the end, let $\UU=\bigcup_{\xi\in\cccc}\FF_\xi$. Notice that $\UU$ will be an ultrafilter by ($\ref{ultrapc}$). Since $\pppp=\cccc$ is regular (see Theorem 7.15 in \cite{blass}), condition ($\ref{pseudopc}$) implies that $\UU$ is a $\mathrm{P}_\cccc$-point.
\end{proof}

It is well-known that  $\mathrm{MA(countable)}$ is not a sufficient hypothesis for the above lemma. Consider the Cohen model $W=V[(c_\alpha:\alpha<\omega_2)]$, where $V\vDash\mathrm{CH}$ and
each $c_\alpha$ is an element of $2^\omega$ that avoids all meager Borel sets with Borel codes in $V[(c_\beta:\beta<\omega_2, \beta\neq\alpha)]$. Observe that every $x\in 2^\omega$ is in $V[(c_\alpha:\alpha\in I)]$ for some countable set $I\subseteq\omega_2$. In this model, $\textrm{cov}(\MM)=\cccc=\omega_2$, so $\mathrm{MA(countable)}+\neg\mathrm{CH}$ holds (see Theorem 7.13 in \cite{blass}). However, if $\UU\in W$ is a non-principal ultrafilter, then $\UU\cap V[(c_\alpha:\alpha<\omega_1)]$ is a subset of $\UU$ of size $\omega_1$ with no infinite pseudointersection.

\begin{theorem}\label{lrpspu}
It is consistent, relative to a Mahlo cardinal, that there exists a non-principal ultrafilter $\UU\subseteq 2^\omega$ such that $A\cap\UU$ has the perfect set property for all $A\in\PP\bigl(2^\omega\bigr)\cap L(\RRR)$. On the other hand, if there exists such an ultrafilter $\UU$, then $\omega_1$ is inaccessible in $L$.
\end{theorem}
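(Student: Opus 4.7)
The plan has two parts, mirroring the two statements of the theorem.

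For the consistency (upper bound), I would apply Lemma~\ref{mahloma} to obtain, from a Mahlo cardinal, a model $W$ in which $\mathrm{MA}(\sigma\text{-centered})+\neg\mathrm{CH}+\mathrm{PSP}(L(\RRR))$ holds. Inside $W$, Lemma~\ref{pcpoint} yields a $\mathrm{P}_\cccc$-point $\UU$, which is in particular a $\mathrm{P}_{\omega_2}$-point since $\cccc\geq\omega_2$. Given any $A\in\PP(2^\omega)\cap L(\RRR)$, every closed (in $2^\omega$) subset $C$ of $A$ is coded by a real, hence lies in $L(\RRR)$, so $\mathrm{PSP}(L(\RRR))$ forces $C$ to have the perfect set property. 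Lemma~\ref{pal2psp} then delivers that $A\cap\UU$ has the perfect set property, as desired.

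For the consistency strength lower bound, I would use the standard equivalence that $\omega_1$ is inaccessible in $L$ iff $\omega_1^{L[r]}<\omega_1$ for every real $r$, and argue by contradiction. So suppose $\UU$ is as in the hypothesis and, for some real $r$, $\omega_1^{L[r]}=\omega_1$. By the Mansfield--Solovay theorem, there is a $\Pi^1_1(r)$ set $A\subseteq 2^\omega$ with $|A|=\omega_1$ and no perfect subset. Then $A\in L(\RRR)$, and also $c[A]\in L(\RRR)$ has no perfect subset, since $c$ is a homeomorphism. Because $c$ restricts to a bijection $\UU\to c[\UU]=2^\omega\setminus\UU$, we have $|c[A]\cap\UU|=|A\cap(2^\omega\setminus\UU)|$, so $|A\cap\UU|+|c[A]\cap\UU|=|A|=\omega_1$. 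Thus at least one of $A\cap\UU$ or $c[A]\cap\UU$ is uncountable; that intersection is an uncountable subset of the corresponding $L(\RRR)$-set, hence has no perfect subset, contradicting the perfect set property granted to $A\cap\UU$ (respectively $c[A]\cap\UU$) by hypothesis.

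The main subtlety I anticipate is the need for the genuine Mansfield--Solovay witness rather than a naive candidate like $L[r]\cap 2^\omega$: when $\cccc=\omega_1$ and $V=L[r]$, the naive candidate is simply $2^\omega$ and of course contains perfect sets, so one really must invoke the $\Pi^1_1(r)$ counterexample to the perfect set property to get a uniform witness. The $c$-symmetry trick is then the crucial move that converts failure of PSP for some $A\in L(\RRR)$ into failure of PSP for $A\cap\UU$ (or $c[A]\cap\UU$), which is what the hypothesis on $\UU$ actually constrains. Once these pieces are in place, both directions of the theorem reduce to a clean assembly of the lemmas already proved in this section.
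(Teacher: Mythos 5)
Your proposal is correct and follows essentially the same route as the paper: the consistency half is the identical assembly of Lemmas~\ref{mahloma}, \ref{pcpoint} and \ref{pal2psp}, and the lower bound rests on the same $c$-symmetry trick. The only cosmetic difference is that the paper uses the identity $A=(A\cap\UU)\cup c[c[A]\cap\UU]$ to deduce full $\mathrm{PSP}(L(\RRR))$ and then quotes the implication to $\omega_1^{L[r]}<\omega_1$, whereas you apply the same symmetry directly to the Mansfield--Solovay witness in contrapositive form.
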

\begin{proof}
Assume that $\mathrm{MA(}\sigma\text{-}\mathrm{centered)}+\neg\mathrm{CH}+\mathrm{PSP}(L(\RRR))$ holds, which is consistent relative to a Mahlo cardinal by Lemma~\ref{mahloma}. By Lemma~\ref{pcpoint}, there exists a $\mathrm{P}_{\cccc}$-point $\UU$. Since $\neg\mathrm{CH}$ holds, $\UU$ is a $\mathrm{P}_{\omega_2}$-point. Fix $A\in\PP\bigl(2^\omega\bigr)\cap L(\RRR)$. Every closed subspace $C$ of $A$ is also in $L(\RRR)$ because $C=A\cap[T]$ for some tree $T\subseteq 2^{<\omega}$. By $\mathrm{PSP}(L(\RRR))$, all such $C$ have the perfect set property. So $A\cap\UU$ has the perfect set property by Lemma~\ref{pal2psp}.

For the second half of the theorem, assume that $\UU\subseteq 2^\omega$ is a non-principal ultrafilter such that $A\cap\UU$ has the perfect set property for all $A\in\PP\bigl(2^\omega\bigr)\cap L(\RRR)$. First, observe that given $A$ as above, $c[A]$ is in $L(\RRR)$ too, so $A\cap\UU$ and $c[A]\cap\UU$ have the perfect set property. Since
$$
A=(A\cap\UU)\cup(A\cap c[\UU])=(A\cap\UU)\cup c[c[A]\cap\UU],
$$
it follows that $A$ itself has the perfect set property. So $\mathrm{PSP}(L(\RRR))$ holds, which implies $\omega_1^{L[r]}<\omega_1$ for all reals $r$. Therefore $\omega_1$ is inaccessible in $L$.
\end{proof}

\begin{question}
What is the exact consistency strenght of a non-principal ultrafilter $\UU\subseteq 2^\omega$ such that $A\cap\UU$ has the perfect set property for all $A\in\PP\bigl(2^\omega\bigr)\cap L(\RRR)$? In particular, does the Levy collapse $\mathrm{Col}(\omega,\kappa)$ of an inaccessible cardinal $\kappa$ to $\omega_1$ force such an ultrafilter?
\end{question}

\section{$\mathrm{P}$-points}

Given a non-principal ultrafilter $\UU\subseteq 2^\omega$, it seems natural to investigate whether there is any relation between the topological properties of $\UU$ that we studied so far and combinatorial properties of $\UU$. In order to construct several kinds of non-$\mathrm{P}$-points, we will essentially use an idea from \cite{kunen}.

\begin{definition}
A \emph{mixed independent family} is a pair $(\FF,\Aa)$ of collections of subsets of $\omega$ such that
$$
\bigcap\sigma\cap\bigcap_{x\in\tau}x^{w(x)}
$$
is infinite whenever $\sigma\in[\FF]^{<\omega}$, $\tau\in[\Aa]^{<\omega}$ and $w:\tau\longrightarrow 2$. A \emph{dual mixed independent family} is a pair $(\II,\BB)$ of collections of subsets of $\omega$ such that $(c[\II],c[\BB])$ is a mixed independent family.
\end{definition}

\begin{lemma}\label{nonP}
Let $(\FF,\Aa)$ be a mixed independent family such that $\Aa$ is infinite. Then there exists a non-$\mathrm{P}$-point $\UU$ extending $\FF\cup\Aa$.
\end{lemma}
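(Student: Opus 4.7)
The plan is to realize $\UU$ as any ultrafilter extending the family
\[
\FR\cup\FF\cup\Aa\cup\{\omega\setminus y:y\in\mathcal{P}\},
\]
where $\mathcal{P}$ is the set of all infinite pseudointersections of a fixed countable subfamily $\{a_n:n\in\omega\}$ of distinct elements of $\Aa$ (available because $\Aa$ is infinite). Granting that this family has the finite intersection property, any ultrafilter $\UU$ extending it is non-principal (since $\FR\subseteq\UU$), contains $\FF\cup\Aa$, and fails to be a $\mathrm{P}$-point with witness $\{a_n:n\in\omega\}$: any pseudointersection $y$ of this countable family that belonged to $\UU$ would have to be infinite, hence $y\in\mathcal{P}$, forcing $\omega\setminus y\in\UU$, contradicting $y\in\UU$.

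Verifying the finite intersection property of the displayed family is the only real content. Given $\sigma\in[\FF]^{<\omega}$, $\tau\in[\Aa]^{<\omega}$, a cofinite set $F$, and $y_1,\ldots,y_r\in\mathcal{P}$, the idea is to exploit a spare variable $a_k\in\Aa\setminus\tau$, which exists because $\Aa$ is infinite. Applying the mixed independence hypothesis to the word over $\tau\cup\{a_k\}$ that evaluates to $1$ on each element of $\tau$ and to $0$ on $a_k$ gives that $\bigcap\sigma\cap\bigcap\tau\cap(\omega\setminus a_k)$ is infinite. Since each $y_i\subseteq^\ast a_k$, the set $y_i\cap(\omega\setminus a_k)$ is finite, so $\bigcup_i y_i$ meets this infinite base set in only finitely many points; the cofinite set $F$ also removes only finitely many points. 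Therefore
\[
\bigcap\sigma\cap\bigcap\tau\cap F\cap\bigcap_i(\omega\setminus y_i)
\]
is infinite, in particular nonempty, which establishes the finite intersection property.

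The one substantive step is precisely this finite intersection property verification, and the essential ingredient for overcoming it is the infinitude of $\Aa$: it furnishes, for every finite word involving $\tau\subseteq\Aa$, a fresh coordinate $a_k\notin\tau$ whose complement, through mixed independence, isolates an infinite base set into which the finitely many pseudointersections from $\mathcal{P}$ can only protrude finitely far.
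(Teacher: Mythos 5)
Your proposal is correct and is essentially the paper's proof: the paper likewise fixes a countably infinite $\BB\subseteq\Aa$ and extends $\FF\cup\Aa$ by the complements of all pseudointersections of $\BB$, with the finite intersection property verified exactly as you do (a fresh coordinate outside $\tau$, used with exponent $0$ in a word). One small precision: your spare variable $a_k$ must be chosen from the countable subfamily $\{a_n:n\in\omega\}$ itself (not merely from $\Aa\setminus\tau$) so that $y_i\subseteq^\ast a_k$ holds, which is possible since that subfamily is infinite and $\tau$ is finite --- your notation indicates this is what you intend.
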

\begin{proof}
Fix a countably infinite subset $\BB$ of $\Aa$. It is easy to check that 
$$
\GG=\FF\cup\Aa\cup\{\omega\setminus x:x\subseteq^\ast y\textrm{ for every }y\in\BB\}
$$
has the finite intersection property. Let $\UU$ be any ultrafilter extending $\GG$. It is clear that $\BB$ has no pseudointersection in $\UU$.
\end{proof}

Similarly, one can prove the following.

\begin{lemma}\label{dualnonP}
Let $(\II,\BB)$ be a dual mixed independent family such that $\BB$ is infinite. Then there exists a maximal ideal $\JJ$ extending $\II\cup\BB$ that is not a $\mathrm{P}$-ideal.
\end{lemma}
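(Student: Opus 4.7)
The statement is the exact dual of Lemma~\ref{nonP}, so the natural plan is to derive it from that lemma by applying the complementation involution $c:2^\omega\longrightarrow 2^\omega$, which sends ultrafilters to maximal ideals and vice versa, and which the paper has already established as the bridge between the P-point and P-ideal notions.

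Specifically, my plan is to set $\FF=c[\II]$ and $\Aa=c[\BB]$. By the definition of a dual mixed independent family, $(\FF,\Aa)$ is a mixed independent family, and $\Aa$ is infinite because $\BB$ is and $c$ is a bijection. Lemma~\ref{nonP} then produces a non-$\mathrm{P}$-point $\UU$ with $\UU\supseteq\FF\cup\Aa$. I would then take $\JJ=c[\UU]$; since $c$ is an involutive homeomorphism of $2^\omega$ that exchanges ultrafilters and maximal ideals, $\JJ$ is a maximal ideal, and
\[
\JJ=c[\UU]\supseteq c[\FF]\cup c[\Aa]=c[c[\II]]\cup c[c[\BB]]=\II\cup\BB.
\]
Finally, because $c[\JJ]=\UU$ is not a $\mathrm{P}$-point, $\JJ$ is by definition not a $\mathrm{P}$-ideal, which completes the plan.

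There is essentially no obstacle here: every nontrivial step has already been isolated, either in the definition of a dual mixed independent family, in the earlier observation that $\UU\mapsto c[\UU]$ sends ultrafilters to maximal ideals, or in the definition of a $\mathrm{P}$-ideal as one whose complement is a $\mathrm{P}$-point. The only thing worth double-checking while writing out the proof is that $\UU$ really is non-principal (so that $c[\UU]$ is a genuine maximal ideal rather than a principal one), but this is automatic: $\UU$ contains the infinite independent set $\Aa$ together with witnesses forcing countably many of its members to have no pseudointersection in $\UU$, so $\UU$ cannot be concentrated on a single point.
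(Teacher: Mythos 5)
Your proof is correct and matches the paper's intent: the paper gives no separate argument for this lemma, saying only that it is proved ``similarly'' to Lemma~\ref{nonP}, and conjugating everything by the involution $c$ is precisely the clean way to make that ``similarly'' precise. The one point you flag---non-principality of $\UU$---is indeed automatic, since the family $\GG$ in the proof of Lemma~\ref{nonP} contains $\omega\setminus\{n\}$ for every $n\in\omega$ (each singleton being a pseudointersection of the chosen countable subfamily), so $\UU$ is non-principal and $\JJ=c[\UU]$ contains all finite sets.
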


We will begin by studying the relation between $\mathrm{P}$-points and completely Baire ultrafilters.

\begin{theorem} There exists a non-$\mathrm{P}$-point $\UU\subseteq 2^\omega$ that is not completely Baire.
\end{theorem}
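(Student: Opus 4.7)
The plan is to combine Kunen's closed embedding trick (Theorem~\ref{closed}) with Lemma~\ref{nonP}. Start by fixing a perfect independent family $P\subseteq 2^\omega$ via Lemma~\ref{perfectindependent}. Since $P$ is homeomorphic to $2^\omega$, partition $P$ into two non-empty, disjoint, relatively clopen pieces $P_0$ and $P_1$; each $P_i$ is then perfect in $2^\omega$ (hence closed in $2^\omega$) and itself homeomorphic to $2^\omega$. Pick a countable dense subspace $C\subseteq P_0$---so $C\cong\QQQ$ by Sierpi\'{n}ski's theorem---and pick any countably infinite $\BB\subseteq P_1$.

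Now set $\FF=\FR\cup C\cup\{\omega\setminus y:y\in P_0\setminus C\}$ and $\Aa=\BB$. I claim $(\FF,\Aa)$ is a mixed independent family. A generic word determined by $\sigma\in[\FF]^{<\omega}$, $\tau\in[\Aa]^{<\omega}$ and $w:\tau\longrightarrow 2$ decomposes as a cofinite set (coming from the $\FR$-part of $\sigma$) intersected with a word in the independent family $P$ whose positive literals lie in the disjoint pieces $C\subseteq P_0$ and $\tau\cap w^{-1}(1)\subseteq\BB\subseteq P_1$, and whose negative literals lie in the disjoint pieces $P_0\setminus C$ (the part of $\sigma$ of that shape) and $\tau\cap w^{-1}(0)\subseteq\BB\subseteq P_1$. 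The word in $P$ is infinite by independence of $P$, and intersecting with a cofinite set preserves infinitude. Since $\Aa=\BB$ is infinite, Lemma~\ref{nonP} produces a non-$\mathrm{P}$-point $\UU$ extending $\FF\cup\Aa$; such $\UU$ is automatically non-principal since $\FR\subseteq\UU$.

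Finally, $\UU$ contains $C$ as a closed copy of $\QQQ$: by construction $C\subseteq\UU$, while $\omega\setminus y\in\UU$ for every $y\in P_0\setminus C$, hence $\UU\cap P_0=C$; and since $P_0$ is closed in $2^\omega$ we have $\cl_{2^\omega}(C)\subseteq P_0$, whence $\cl_{2^\omega}(C)\cap\UU=\cl_{2^\omega}(C)\cap C=C$. So $\UU$ is not completely Baire by Lemma~\ref{hurewicz}. The only subtle point in the whole argument is keeping $C$ closed in $\UU$ even after forcing the extra sets $\BB$ into $\UU$; splitting $P$ into the two disjoint closed halves $P_0$ and $P_1$ and confining $C$ to $P_0$ and $\BB$ to $P_1$ is exactly what guarantees this, since $\cl_{2^\omega}(C)\subseteq P_0$ then cannot accidentally pick up any element of $\BB\subseteq P_1$.
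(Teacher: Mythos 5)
Your proof is correct and follows essentially the same route as the paper: Kunen's closed embedding trick for a copy of $\QQQ$ combined with Lemma~\ref{nonP} applied to a (mixed) independent family extracted from a perfect independent family. The only real difference is that the paper skips your splitting of $P$ into $P_0$ and $P_1$: it notes that $\GG=C\cup\{\omega\setminus x:x\in P\setminus C\}$ is itself an independent family and applies Lemma~\ref{nonP} directly to $(\varnothing,\GG)$, which already keeps $C$ closed in $\UU$ because $\cl(C)\subseteq P$ and $\UU\cap P=C$ no matter where the witnessing countable subfamily $\BB$ sits.
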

\begin{proof}
We will use the same notation as in the proof of Theorem~\ref{closed}.
Choose $C=\QQQ$, so that any ultrafilter extending $\GG$ 
will contain a closed copy of $\QQQ$. Now simply apply Lemma $\ref{nonP}$ to $(\varnothing,\GG)$.
\end{proof}

\begin{theorem}\label{cbPpoint} Assume that $\mathrm{MA(countable)}$ holds. Then there exists a $\mathrm{P}$-point $\UU\subseteq 2^\omega$ that is completely Baire.
\end{theorem}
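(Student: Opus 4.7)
The plan is to run the construction of Theorem \ref{cb} while simultaneously diagonalizing for the $\mathrm{P}$-point property at each successor stage. Using $\cccc^\omega=\cccc$, enumerate as $\{\CC_\eta:\eta\in\cccc\}$ all countable subsets of $\PP(\omega)$ with each such set appearing cofinally often in the enumeration, and enumerate as $\{Q_\eta:\eta\in\cccc\}$ all subsets of $2^\omega$ homeomorphic to $\QQQ$. Build $\FF_\xi$ ($\xi\in\cccc$) with $\FF_0=\FR$, $|\FF_\xi|<\cccc$, the finite intersection property preserved, and unions at limit stages. At a successor stage $\xi=\eta+1$, first execute the dichotomy of Theorem \ref{cb} for $Q_\eta$ (using Lemma \ref{keycb} when needed) to produce an intermediate family $\FF'\supseteq\FF_\eta$ with $|\FF'|<\cccc$; then, if $\CC_\eta\subseteq\FF'$, apply a new key lemma (stated below) to get a pseudointersection $y$ of $\CC_\eta$ with $\FF'\cup\{y\}$ still having the finite intersection property, and set $\FF_\xi=\FF'\cup\{y\}$; otherwise set $\FF_\xi=\FF'$. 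Let $\UU$ be any ultrafilter extending $\bigcup_{\xi\in\cccc}\FF_\xi$.

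The key lemma asserts: under $\mathrm{MA(countable)}$, if $\FF\subseteq\PP(\omega)$ has the finite intersection property, $|\FF|<\cccc$, and $\CC\subseteq\FF$ is countable, then there is $y\subseteq\omega$ with $y\subseteq^\ast A$ for every $A\in\CC$ such that $\FF\cup\{y\}$ has the finite intersection property. Consider the countable poset $\PPP$ of pairs $p=(s_p,C_p)$ with $s_p\in {}^{<\omega}2$ and $C_p\in[\CC]^{<\omega}$, ordered by $q\leq p$ iff $s_p\subseteq s_q$, $C_p\subseteq C_q$, and $s_q(i)=1$ implies $i\in\bigcap C_p$ for every $i\in\dom(s_q)\setminus\dom(s_p)$. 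For each $A\in\CC$ the set $\{p\in\PPP:A\in C_p\}$ is dense (simply adjoin $A$ to $C_p$); for each $\sigma\in[\FF]^{<\omega}$ and $\ell\in\omega$, the set $\{p\in\PPP:\textrm{there exists }i\geq\ell\textrm{ with }s_p(i)=1\textrm{ and }i\in\bigcap\sigma\}$ is dense, because $\bigcap\sigma\cap\bigcap C_p$ is infinite (as $\sigma\cup C_p\in[\FF]^{<\omega}$ and $\FF$ has the finite intersection property). $\mathrm{MA(countable)}$ produces the desired generic filter $G$, and $y=\{i\in\omega:\textrm{there exists }(s,C)\in G\textrm{ with }s(i)=1\}$ works: the dense sets $D_A$ guarantee $y\subseteq^\ast A$ for all $A\in\CC$, while the $D_{\sigma,\ell}$ guarantee $y\cap\bigcap\sigma$ is infinite for all $\sigma\in[\FF]^{<\omega}$.

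The completely Baire property of $\UU$ follows from Lemma \ref{hurewicz} exactly as in Theorem \ref{cb}, since no $Q_\eta$ can be a closed subset of $\UU$. For the $\mathrm{P}$-point property, given a countable $\CC\subseteq\UU$, the fact that $\cf(\cccc)>\omega$ yields $\mu<\cccc$ with $\CC\subseteq\FF_\mu$; cofinal repetition of the enumeration then yields $\eta\geq\mu$ with $\CC_\eta=\CC$, so the pseudointersection step at stage $\eta+1$ places a pseudointersection of $\CC$ into $\FF_{\eta+1}\subseteq\UU$. The main obstacle is showing that the closed-accumulation-point step (from Theorem \ref{cb}) and the pseudointersection step (from the key lemma) can be freely interleaved; since both steps preserve only the invariants ``finite intersection property'' and ``size $<\cccc$,'' this composition is immediate. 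The technical heart is thus the new key lemma above, which is a routine $\mathrm{MA(countable)}$ forcing argument modeled on Lemma \ref{keycb}.
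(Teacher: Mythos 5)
Your overall strategy (interleave the closed-copies-of-$\QQQ$ diagonalization of Theorem \ref{cb} with a pseudointersection step at successor stages) matches the paper's, and your key lemma is correct as stated: the poset, the dense sets, and the verification that the generic $y$ is a pseudointersection of $\CC$ compatible with $\FF$ all check out. (The paper instead quotes $\dddd=\cccc$ together with Proposition 6.24 of Blass to produce the pseudointersection; your direct forcing argument is a reasonable self-contained substitute.)

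There is, however, a genuine gap in the verification that $\UU$ is a $\mathrm{P}$-point. You take $\UU$ to be \emph{any} ultrafilter extending $\bigcup_{\xi\in\cccc}\FF_\xi$, and then argue that a countable $\CC\subseteq\UU$ satisfies $\CC\subseteq\FF_\mu$ for some $\mu<\cccc$ by $\cf(\cccc)>\omega$. But that cofinality argument only applies to countable subsets of $\bigcup_{\xi\in\cccc}\FF_\xi$, and $\UU$ is in general a proper extension of that union: nothing in your construction decides every subset of $\omega$, so the final passage to an ultrafilter adds sets that never appeared at any stage, and a countable family made of such sets is never submitted to your pseudointersection step --- your trigger ``if $\CC_\eta\subseteq\FF'$'' simply never fires for it. Two standard repairs: (i) also enumerate $\PP(\omega)$ as $\{z_\eta:\eta\in\cccc\}$ and at stage $\eta+1$ add $z_\eta$ or $\omega\setminus z_\eta$ to the family, so that $\bigcup_{\xi\in\cccc}\FF_\xi$ is itself an ultrafilter and your argument applies verbatim; or (ii) do what the paper does, namely handle \emph{every} $\CC_\eta$ at its stage via a dichotomy: if $\FF'\cup\{\omega\setminus x\}$ has the finite intersection property for some $x\in\CC_\eta$, add $\omega\setminus x$ (which guarantees $\CC_\eta\nsubseteq\UU$ for any extension $\UU$); otherwise $\CC_\eta\subseteq\langle\FF'\rangle$ and one adds a pseudointersection --- your key lemma goes through with $\CC\subseteq\langle\FF\rangle$ in place of $\CC\subseteq\FF$, since the density of $D_{\sigma,\ell}$ only needs $\bigcap\sigma\cap\bigcap C_p$ to be infinite. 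With repair (ii) the cofinal repetition of the $\CC_\eta$'s is not needed at all.
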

\begin{proof}
Enumerate all countable collections of subsets of $\omega$ as $\{\CC_\eta:\eta\in\cccc\}$. The setup of the construction will be as in the proof of Theorem $\ref{cb}$, but we will do different things at even and odd successor stages.

Start by letting $\FF_0=\FR$. Take unions at limit stages. At a successor stage $\xi=2\eta+1$, assume that $\FF_{2\eta}$ is given, then take care of $Q_\eta$ as in the proof of Theorem $\ref{cb}$. At a successor stage $\xi=2\eta+2$, assume that $\FF_{2\eta+1}$ is given, then take care of $\CC_\eta$ as follows.

First assume that there exists $x\in\CC_\eta$ such that $\FF_{2\eta+1}\cup\{\omega\setminus x\}$ has the finite intersection property. In this case, we can just set $\FF_\xi=\FF_{2\eta+1}\cup\{\omega\setminus x\}$. Now assume that $\FF_{2\eta+1}\cup\{\omega\setminus x\}$ does not have the finite intersection property for any $x\in\CC_\eta$. It is easy to check that this implies $\CC_\eta\subseteq\langle\FF_{2\eta+1}\rangle$. Recall that $\mathrm{MA(countable)}$ implies $\dddd=\cccc$ (see, for example, Proposition 5.5 and Theorem 7.13 in \cite{blass}). So, by Proposition 6.24 in \cite{blass}, there exists a pseudointersection $x$ of $\CC_\eta$ such that $\FF_{2\eta+1}\cup\{x\}$ has the finite intersection property. Finally, set $\FF_\xi=\FF_{2\eta+1}\cup\{x\}$.
\end{proof}

\begin{question} For a non-principal ultrafilter $\UU\subseteq 2^\omega$, is being a $\mathrm{P}$-point equivalent to being completely Baire?
\end{question}

Now we turn to the relation between $\mathrm{P}$-points and countable dense homogeneous ultrafilters.

\begin{theorem} Assume that $\mathrm{MA(countable)}$ holds. Then there exists a non-principal ultrafilter $\UU\subseteq 2^\omega$ that is countable dense homogeneous but not a $\mathrm{P}$-point.
\end{theorem}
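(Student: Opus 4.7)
The plan is to interleave the countable dense homogeneity construction from the proof of Theorem~\ref{cdh} with the mixed-independence trick underlying Lemma~\ref{dualnonP}. Fix once and for all a countably infinite independent family $\BB = \{b_n : n \in \omega\}\subseteq 2^\omega$ (for instance by taking a countable subset of the perfect independent family produced by Lemma~\ref{perfectindependent}). We build a chain $(\II_\xi : \xi<\cccc)$ of families of subsets of $\omega$, starting from $\II_0 = \FIN$ and taking unions at limits, exactly as in the proof of Theorem~\ref{cdh}, except that we strengthen the recursion invariant from ``$\II_\xi$ has the finite union property'' to ``$(\II_\xi,\BB)$ is a dual mixed independent family''. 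This stronger invariant holds initially since $\BB$ is independent. At the end of the recursion, Lemma~\ref{dualnonP} applied to $(\bigcup_{\xi<\cccc}\II_\xi,\BB)$ produces a maximal ideal $\JJ$ extending $\bigcup_{\xi<\cccc}\II_\xi\cup\BB$ that is not a $\mathrm{P}$-ideal; the ultrafilter $\UU=c[\JJ]$ is then countable dense homogeneous for the same reason as in Theorem~\ref{cdh}, while $c[\BB]\subseteq\UU$ is a countable subfamily of $\UU$ with no pseudointersection in $\UU$, so $\UU$ fails to be a $\mathrm{P}$-point.

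The technical heart of the adaptation is a strengthening of Lemma~\ref{keycdh}. We keep the poset $\PPP$ unchanged but enlarge the family $\DD$ of dense sets we meet by adjoining, for every $\sigma=\{y_1,\ldots,y_k\}\in[\II]^{<\omega}$, every $\tau\in[\BB]^{<\omega}$, every $w\colon\tau\to 2$, and every $\ell\in\omega$,
$$
D_{\sigma,\tau,w,\ell}=\bigl\{p\in\PPP : \exists\, i\in n_p\setminus\ell,\ s_p(i)=y_1(i)=\cdots=y_k(i)=0\text{ and }b(i)=1-w(b)\text{ for all }b\in\tau\bigr\}.
$$
The density proof for $D_{\sigma,\tau,w,\ell}$ mirrors the density proof for $D_{\sigma,\ell}$ in Lemma~\ref{keycdh}: because $(\II,\BB)$ is dual mixed independent and $\sigma\cup\dom(g)\cup\ran(g)\subseteq\langle\II\rangle$, the word set $\bigcap_{b\in\tau,\,w(b)=0}b\cap\bigcap_{b\in\tau,\,w(b)=1}(\omega\setminus b)$ has infinitely many elements outside $\bigcup\sigma\cup\bigcup_{d\in\dom(g)}d\cup\bigcup_{e\in\ran(g)}e$, and extending $\pi$ by the identity at the new coordinate $n'\geq\ell$ makes setting $s(n')=0$ compatible with condition~$(\ref{pis})$ of the poset. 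Since $|\BB|=\omega$ and $|\II|<\cccc$, the enlarged $\DD$ still has size below $\cccc$, so $\mathrm{MA(countable)}$ produces the generic filter. The resulting $x$ and homeomorphism $f$ satisfy the conclusions of Lemma~\ref{keycdh} together with the new guarantee that $(\II\cup\{x\},\BB)$ is dual mixed independent.

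The main obstacle is the ``case~(a)'' branch of the recursion, where Theorem~\ref{cdh} adds $\omega\setminus x$ for some $x\in D_\eta\cup E_\eta$ with $\II_\eta\cup\{\omega\setminus x\}$ having the finite union property. To preserve the new invariant we need such an $x$ to further satisfy that $(\II_\eta\cup\{\omega\setminus x\},\BB)$ is dual mixed independent, i.e.\ that $x$ is not almost contained in any set of the form $\bigcup\sigma\cup\bigcup_{b\in\tau,\,w(b)=1}b\cup\bigcup_{b\in\tau,\,w(b)=0}(\omega\setminus b)$ with finite $\sigma\subseteq\II_\eta$ and finite $\tau\subseteq\BB$. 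If such an $x\in D_\eta\cup E_\eta$ exists, we take it. Otherwise every $x\in D_\eta\cup E_\eta$ has the opposite property, and a further mild weakening of the strengthened key lemma applies: instead of the hypothesis $D\cup E\subseteq\langle\II\rangle$, we need only that each element of $D\cup E$ be almost covered by a finite union of elements of $\II$ together with finitely many signed atoms of $\BB$. This weaker cover hypothesis still suffices in the density argument above, since the finite $\BB$-witnesses for $\dom(g)\cup\ran(g)$ contribute only a single fixed finite word of $\BB$ which dual mixed independence absorbs. This is the only place where the argument genuinely departs from the proofs of Theorem~\ref{cdh} and Lemma~\ref{keycdh}.
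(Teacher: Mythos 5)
Your overall architecture is the paper's: run the recursion of Theorem~\ref{cdh} while maintaining dual mixed independence with an auxiliary family $\BB$, strengthen Lemma~\ref{keycdh} by adjoining the dense sets $D_{\sigma,\tau,w,\ell}$ (your strengthened key lemma is exactly the paper's Lemma~\ref{keycdhmixed}), and finish with Lemma~\ref{dualnonP}. The one structural departure is that you fix $\BB$ as a countably infinite independent family before the recursion begins, whereas the paper starts from $\BB_0=\varnothing$ and adds a single generically chosen element to $\BB$ at each successor stage, as in Lemma~\ref{independentncdh}, so that its key lemma is only ever invoked with the unweakened hypothesis $D\cup E\subseteq\langle\II\rangle$. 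Your third paragraph, where you weaken that hypothesis, is where the argument breaks.

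The ``further mild weakening'' of the key lemma is false, and the pairs it is meant to handle really do occur. Let $b_0\in\BB$ and consider $D=\FIN\cup\{b_0\}$ and $E=\FIN$: both are countable dense subsets of $2^\omega$, and both are contained in the final $\JJ$ (since $\FIN\subseteq\JJ$ and $\BB\subseteq\JJ$ by the application of Lemma~\ref{dualnonP}), so this pair must be matched by a homeomorphism if $\JJ$ is to be countable dense homogeneous. No element of $D\cup E$ can be excluded in your case~(a): complements of finite sets destroy the finite union property, and adding $\omega\setminus b_0$ to the ideal side is incompatible with dual mixed independence relative to $b_0\in\BB$. So you land in the weakened key lemma, whose hypothesis is satisfied ($b_0$ is covered by the signed atom $b_0$ itself), but whose conclusion fails: any $f$ with $f[D]=E$ sends $b_0$ to some finite $e$, so $b_0+f(b_0)\in x\!\downarrow$ forces $b_0\subseteq^\ast x$, and then $b_0\setminus x$ is finite, contradicting dual mixed independence of $(\II\cup\{x\},\BB)$ (take the word $(\omega\setminus b_0)^0=b_0$ and $\sigma=\varnothing$). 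The same obstruction shows the density argument cannot be repaired: the $\BB$-witnesses covering distinct elements of $\dom(g)\cup\ran(g)$ need not cohere into a single word, and even one of them can contradict the word that $D_{\sigma,\tau,w,\ell}$ must realize --- here $D_{\varnothing,\{b_0\},w,\ell}$ asks for a coordinate $i$ with $b_0(i)=1$ and $s_p(i)=0$, and the compatibility conditions of the poset then force $g(b_0)(i)=b_0(i)=1$, which is impossible for large $i$ once $g(b_0)$ is finite. This is precisely the role of the paper's decision not to commit to $\BB$ in advance: each element of $\BB$ is introduced, generically, only after the current stage's pair has been dealt with, so that the exclusion step and Lemma~\ref{keycdhmixed} can be applied as stated. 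To salvage your version you would need either to revert to the paper's growing $\BB$ or to supply a genuinely new argument for pairs entangled with the fixed $\BB$; the proposal as written does not contain one.
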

\begin{proof}
For notational convenience, we will actually construct a maximal ideal $\JJ\subseteq 2^\omega$ that is countable dense homogeneous but not a $\mathrm{P}$-ideal.

The setup of the construction will be as in the proof of Theorem $\ref{cdh}$, but we will simultaneously construct $\BB_\xi$ for $\xi\in\cccc$ so that the following conditions will be satisfied. In the end, set $\BB=\bigcup_{\xi\in\cccc}\BB_\xi$ and apply Lemma $\ref{dualnonP}$ to $(\II,\BB)$.
\begin{enumerate}
\item $\BB_\mu\subsetneq\BB_\eta$ whenever $\mu<\eta<\cccc$.
\item $(\II_\xi,\BB_\xi)$ is a dual mixed independent family for every $\xi\in\cccc$.
\item $|\BB_\xi|<\cccc$ for every $\xi\in\cccc$.
\end{enumerate}

Start by letting $(\II_0,\BB_0)=(\FIN,\varnothing)$. Take unions at limit stages. At a successor stage $\xi=\eta+1$, assume that $(\II_\eta,\BB_\eta)$ is given. First get $x$ by applying Lemma $\ref{keycdhmixed}$ with $\II=\II_\eta$, $\BB=\BB_\eta$ and $(D,E)=(D_\eta,E_\eta)$. Then, as in the proof of Lemma $\ref{independentncdh}$, use $\mathrm{MA(countable)}$ to get $y\notin\BB_\eta$ such that $(\II_\eta\cup\{x\},\BB_\eta\cup\{y\})$ is still a dual mixed independent family. Finally, set $(\II_\xi,\BB_\xi)=(\II_\eta\cup\{x\},\BB_\eta\cup\{y\})$.
\end{proof}

The following lemma is easily proved by modifying  the proof of Lemma $\ref{keycdh}$ (substitute the dense sets $D_{\sigma,\ell}$ with the obviously defined dense sets $D_{\sigma,\tau,w,\ell}$).

\begin{lemma}\label{keycdhmixed} Assume that $\mathrm{MA(countable)}$ holds. Let $(\II,\BB)$ be a dual mixed independent family such that $|\II|<\cccc$ and $|\BB|<\cccc$. Fix two countable dense subsets $D$ and $E$ of $2^\omega$ such that $D\cup E\subseteq\langle\II\rangle$. Then there exists a homeomorphism $f:2^\omega\longrightarrow 2^\omega$ and $x\in 2^\omega$ such that $f[D]=E$, $(\II\cup\{x\},\BB)$ is still a dual mixed independent family and $\{d+f(d):d\in D\}\subseteq x\!\downarrow$.
\end{lemma}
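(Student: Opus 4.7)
The plan is to run the construction from Lemma \ref{keycdh} essentially verbatim, with the same countable poset $\PPP$ of triples $(s,g,\pi)$, the same order, and the same dense sets $D^{\dom}_d$ and $D^{\ran}_e$ taking care of $f[D]=E$. The only change is in the family of dense sets that handles the combinatorial side condition: instead of $D_{\sigma,\ell}$, I will use
$$
D_{\sigma,\tau,w,\ell}=\{p\in\PPP:\exists i\in n_p\setminus\ell,\ s_p(i)=0,\ \forall y\in\sigma\ y(i)=0,\ \forall y\in\tau\ y(i)=1-w(y)\}
$$
for $\sigma\in[\II]^{<\omega}$, $\tau\in[\BB]^{<\omega}$, $w:\tau\longrightarrow 2$ and $\ell\in\omega$. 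Such an $i$ is exactly a witness that it belongs to $(\omega\setminus(x\cup\bigcup\sigma))\cap\bigcap_{y\in\tau}(\omega\setminus y)^{w(y)}$, so meeting all such dense sets will force $(\II\cup\{x\},\BB)$ to remain a dual mixed independent family.

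The density of $D_{\sigma,\tau,w,\ell}$ is where the hypothesis on $(\II,\BB)$ enters, replacing the finite-union-property argument of Lemma \ref{keycdh}. Given $p=(s,g,\pi)\in\PPP$, I need $n'\geq\max(\ell,n_p)$ such that the restrictions $d\upharpoonright n'$ for $d\in\dom(g)$ are distinct, the restrictions $e\upharpoonright n'$ for $e\in\ran(g)$ are distinct, and moreover at $i=n'$ we simultaneously have $y(n')=0$ for all $y\in\sigma\cup\sigma_0$ (where $\sigma_0\in[\II]^{<\omega}$ is any witness to $\dom(g)\cup\ran(g)\subseteq\langle\II\rangle$, picked once and for all) and $y(n')=1-w(y)$ for all $y\in\tau$. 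The existence of infinitely many such $n'$ is precisely the dual mixed independence of $(\II,\BB)$ applied to $(\sigma\cup\sigma_0,\tau,w)$. Then I extend exactly as in Lemma \ref{keycdh}: first pass to length $n'$ by choosing a permutation $\pi'$ of ${}^{n'}2$ compatible with $g$ and extending $\pi$, setting $s'\equiv 1$ on $[n_p,n')$; then append one more coordinate, setting $\pi''(t)=\pi'(t\upharpoonright n')^\frown t(n')$ and $s''(n')=0$. The resulting condition lies in $D_{\sigma,\tau,w,\ell}$ and refines $p$.

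The rest is bookkeeping. Since $|\II|,|\BB|<\cccc$ and $D,E$ are countable, the collection
$$
\DD=\{D_{\sigma,\tau,w,\ell}\}\cup\{D^\dom_d:d\in D\}\cup\{D^\ran_e:e\in E\}
$$
has size less than $\cccc$, so $\mathrm{MA(countable)}$ furnishes a $\DD$-generic filter $G$. Define $x=\bigcup\{s_p:p\in G\}$ and $f(y)(i)=\pi_p(y\upharpoonright n_p)(i)$ for any $p\in G$ with $i\in n_p$. Exactly as in Lemma \ref{keycdh}, $f$ is a well-defined homeomorphism of $2^\omega$ with $f[D]=E$ and $\{d+f(d):d\in D\}\subseteq x\!\downarrow$ (from compatibility condition ($1$) of the poset). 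Meeting $D_{\sigma,\tau,w,\ell}$ for every choice of parameters yields the required dual mixed independence of $(\II\cup\{x\},\BB)$.

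The main (and only) obstacle is getting the density clause right, i.e.\ correctly translating dual mixed independence into the statement that a single coordinate can realize the prescribed pattern over $\sigma\cup\sigma_0\subseteq\II$ and $\tau\subseteq\BB$ while also being forced to $0$ in the generic $x$. Once that is written down, the rest is a mechanical rerun of the earlier proof.
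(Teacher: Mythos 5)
Your proposal is correct and takes exactly the approach the paper intends: the paper's own ``proof'' is the one-line remark that Lemma \ref{keycdh} should be rerun with the dense sets $D_{\sigma,\ell}$ replaced by the ``obviously defined'' $D_{\sigma,\tau,w,\ell}$. Your explicit definition of those sets (with the witness $i$ satisfying $s_p(i)=0$, $y(i)=0$ for $y\in\sigma$, and $y(i)=1-w(y)$ for $y\in\tau$) and the density argument via dual mixed independence applied to $(\sigma\cup\sigma_0,\tau,w)$ is precisely the intended filling-in of that sketch.
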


\begin{theorem} Assume that $\mathrm{MA(countable)}$ holds. Then there exists a non-$\mathrm{P}$-point $\UU\subseteq 2^\omega$ that is not countable dense homogeneous.
\end{theorem}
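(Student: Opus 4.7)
The plan is to combine the two tools already developed: Lemma~\ref{independentncdh}, which produces an independent family $\Aa$ that ``kills'' countable dense homogeneity for any extending ultrafilter, and Lemma~\ref{nonP}, which converts any mixed independent family $(\FF,\Aa)$ with $\Aa$ infinite into a non-$\mathrm{P}$-point extending $\FF\cup\Aa$. These two lemmas are perfectly compatible: the output of the first is an independent family, and any infinite independent family can serve as the $\Aa$ in the second (with $\FF=\varnothing$).

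Specifically, I would first fix a countable independent family $\DD$ that is dense in $2^\omega$ (for example, the one produced by Lemma~\ref{perfectindependent} contains such a $\DD$), along with disjoint countable dense subsets $D_1, D_2\subseteq\DD$. Then I would invoke Lemma~\ref{independentncdh} to obtain an independent family $\Aa\supseteq\DD$ such that for every homeomorphism $f:G\longrightarrow G$ on a $G_\delta$ set $G\supseteq\DD$ satisfying $f[D_1]=D_2$, there exists $x\in G$ with $\{x,\omega\setminus f(x)\}\subseteq\Aa$. Since the pair $(\varnothing,\Aa)$ is vacuously a mixed independent family and $\Aa$ is infinite (as $\DD\subseteq\Aa$), Lemma~\ref{nonP} yields a non-$\mathrm{P}$-point $\UU\supseteq\Aa$.

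It remains to verify that this $\UU$ is not countable dense homogeneous, and this is done by exactly the argument of Theorem~\ref{ncdh}. The sets $D_1,D_2$ are dense in $2^\omega$ (being countable dense subsets of $\DD$, which is dense in $2^\omega$), and they lie inside $\UU\supseteq\DD$; since $\UU$ is itself dense in $2^\omega$, it follows that $D_1,D_2$ are countable dense subsets of $\UU$. If $\UU$ were countable dense homogeneous, there would be a homeomorphism $g:\UU\longrightarrow\UU$ with $g[D_1]=D_2$, which Lavrentiev's lemma would extend to a homeomorphism $f:G\longrightarrow G$ on a $G_\delta$ set $G\supseteq\DD$. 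By the defining property of $\Aa$, some $x\in G$ would satisfy $\{x,\omega\setminus f(x)\}\subseteq\Aa\subseteq\UU$, contradicting $f(x)=g(x)\in\UU$.

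I do not expect a serious obstacle here, as both of the key lemmas have already been established and fit together without modification. If one preferred a single transfinite recursion over $\cccc$, one could interleave the construction of Lemma~\ref{independentncdh} with the step from Lemma~\ref{nonP} (treating an enumerated countably infinite subset $\BB\subseteq\Aa$ as a witness that admits no pseudointersection in $\UU$), but the modular approach above seems to be the cleanest presentation.
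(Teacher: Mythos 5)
Your proposal is correct and is essentially the paper's own argument: the paper likewise takes $\Aa$ from Lemma~\ref{independentncdh}, observes via the proof of Theorem~\ref{ncdh} that no ultrafilter extending $\Aa$ is countable dense homogeneous, and then applies Lemma~\ref{nonP} to the mixed independent family $(\varnothing,\Aa)$. Nothing further is needed.
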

\begin{proof}
Let $\Aa$ be as in Lemma $\ref{independentncdh}$. By the proof of Theorem~\ref{ncdh}, no ultrafilter extending $\Aa$ is countable dense homogeneous. Now simply apply Lemma~\ref{nonP} to $(\varnothing,\Aa)$.
\end{proof}

\begin{theorem} Assume that $\mathrm{MA(countable)}$ holds. Then there exists a $\mathrm{P}$-point $\UU\subseteq 2^\omega$ that is countable dense homogeneous.
\end{theorem}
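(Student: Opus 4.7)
The plan is to merge the construction of a countable dense homogeneous ultrafilter from Theorem \ref{cdh} with the even/odd alternation technique of Theorem \ref{cbPpoint}. For notational convenience (as in Theorem \ref{cdh}), I would build a maximal ideal $\JJ\supseteq\FIN$ whose dual $\UU=c[\JJ]$ is the desired $\mathrm{P}$-point; the property of being CDH is preserved by $c$, and $\UU$ is a $\mathrm{P}$-point iff $\JJ$ is a $\mathrm{P}$-ideal, i.e.\ every countable $\CC\subseteq\JJ$ admits $y\in\JJ$ with $x\subseteq^{\ast} y$ for all $x\in\CC$.

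Enumerate all pairs $(D_\eta,E_\eta)$ of countable dense subsets of $2^\omega$ and all countable collections $\CC_\eta\subseteq\PP(\omega)$ in type $\cccc$, and recursively build $\II_\xi\subseteq\PP(\omega)$ with the finite union property, $|\II_\xi|<\cccc$, taking unions at limits. At an \emph{even} successor stage $\xi=2\eta+1$, I would handle the pair $(D_\eta,E_\eta)$ exactly as in Theorem \ref{cdh}: either some $x\in D_\eta\cup E_\eta$ allows $\II_{2\eta}\cup\{\omega\setminus x\}$ to have the finite union property (just add $\omega\setminus x$ and continue), or $D_\eta\cup E_\eta\subseteq\langle\II_{2\eta}\rangle$, in which case I apply Lemma \ref{keycdh} to obtain a single $x$ and a homeomorphism $f_\eta:2^\omega\to 2^\omega$ with $f_\eta[D_\eta]=E_\eta$ and $\{d+f_\eta(d):d\in D_\eta\}\subseteq x\!\downarrow$; by Lemma \ref{davelemma}, $f_\eta$ restricts to a self-homeomorphism of the final $\JJ$.

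At an \emph{odd} successor stage $\xi=2\eta+2$, I would handle $\CC_\eta$ exactly dually to the pseudointersection step in Theorem \ref{cbPpoint}. If some $x\in\CC_\eta$ has $\II_{2\eta+1}\cup\{\omega\setminus x\}$ with finite union property, add $\omega\setminus x$; this forces $x\notin\JJ$ and so $\CC_\eta\not\subseteq\JJ$. Otherwise a routine calculation shows $\CC_\eta\subseteq\langle\II_{2\eta+1}\rangle$ (writing out the failure of FUP gives $x\subseteq^{\ast}\bigcup\sigma$ for some finite $\sigma\subseteq\II_{2\eta+1}$). Since $\mathrm{MA(countable)}$ implies $\dddd=\cccc$, the dual of Proposition 6.24 in \cite{blass}, applied to $\CC_\eta$ and to the "large" sets $\omega\setminus\bigcup\sigma$ for $\sigma\in[\II_{2\eta+1}]^{<\omega}$, produces a pseudounion $y$ of $\CC_\eta$ such that $\II_{2\eta+1}\cup\{y\}$ still has the finite union property; set $\II_\xi=\II_{2\eta+1}\cup\{y\}$. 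This pseudounion will lie in $\JJ$ and witness the $\mathrm{P}$-ideal property for $\CC_\eta$ whenever $\CC_\eta\subseteq\JJ$.

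The main obstacle is verifying that the two agendas do not interfere: the CDH stages must leave $|\II_\xi|<\cccc$ and the finite union property intact (which they do automatically, since only one set is added), and the $\mathrm{P}$-ideal stages must add a single set that does not destroy the availability of Lemma \ref{keycdh} at later even stages. Since Lemma \ref{keycdh} only requires the hypotheses $|\II|<\cccc$, the finite union property, and $D\cup E\subseteq\langle\II\rangle$ (a hypothesis that only becomes \emph{easier} to satisfy as $\II$ grows), the two constructions dovetail cleanly; and the resulting $\JJ$, extended to any maximal ideal, is simultaneously CDH (by the arguments of Theorem \ref{cdh} applied to the even stages) and a $\mathrm{P}$-ideal (by the odd stages), as required.
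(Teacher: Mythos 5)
Your proposal is correct and follows essentially the same route as the paper: both build a maximal ideal extending $\FIN$ by the transfinite recursion of Theorem \ref{cdh}, interleaving the CDH stages (via Lemma \ref{keycdh} and Lemma \ref{davelemma}) with pseudounion stages handled dually to the pseudointersection step of Theorem \ref{cbPpoint}. Your added remarks on why the two agendas do not interfere are a fair (and correct) elaboration of what the paper leaves implicit.
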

\begin{proof}
For notational convenience, we will actually construct a maximal ideal $\JJ\subseteq 2^\omega$ that is countable dense homogeneous and a $\mathrm{P}$-ideal. Enumerate all countable collections of subsets of $\omega$ as $\{\CC_\eta:\eta\in\cccc\}$. The setup of the construction will be as in the proof of Theorem $\ref{cdh}$, but we will do different things at even and odd successor stages.

Start by letting $\II_0=\FIN$. Take unions at limit stages. At a successor stage $\xi=2\eta+1$, assume that $\II_{2\eta}$ is given, then take care of $(D_\eta,E_\eta)$ as in the proof of Theorem $\ref{cdh}$. At a successor stage $\xi=2\eta+2$, assume that $\II_{2\eta+1}$ is given, then take care of $\CC_\eta$ as follows.

First assume that there exists $x\in\CC_\eta$ such that $\II_{2\eta+1}\cup\{\omega\setminus x\}$ has the finite union property. In this case, we can just set $\II_\xi=\II_{2\eta+1}\cup\{\omega\setminus x\}$. Now assume that $\II_{2\eta+1}\cup\{\omega\setminus x\}$ does not have the finite union property for any $x\in\CC_\eta$. It is easy to check that this implies $\CC_\eta\subseteq\langle\II_{2\eta+1}\rangle$. As in the proof of Theorem $\ref{cbPpoint}$, it is possible to get a pseudounion $x$ of $\CC_\eta$ such that $\II_{2\eta+1}\cup\{x\}$ has the finite union property. Finally, set $\II_\xi=\II_{2\eta+1}\cup\{x\}$.
\end{proof}

\begin{question} Is a $\mathrm{P}$-point $\UU\subseteq 2^\omega$ necessarily countable dense homogeneous?
\end{question}

Finally, we will investigate the relation between $\mathrm{P}$-points and the perfect set property.

\begin{theorem} There exists a non-$\mathrm{P}$-point $\UU\subseteq 2^\omega$ with a closed subset of cardinality $\cccc$ that does not have the perfect set property.
\end{theorem}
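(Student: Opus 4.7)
The plan is to merge the Kunen closed embedding trick used in Theorem~\ref{noclosedpsp} with Lemma~\ref{nonP}, exploiting the fact that Kunen's family is itself a mixed independent family essentially on the nose. Concretely, first fix a perfect independent family $P\subseteq 2^\omega$ via Lemma~\ref{perfectindependent}, and, identifying $P$ with $2^\omega$, choose a Bernstein subset $B\subseteq P$. Then $|B|=\cccc$ and $B$ contains no perfect set, since any perfect $D\subseteq B$ would be a perfect subset of $P$ with $D\cap(P\setminus B)=\varnothing$, contradicting the Bernstein property. Form Kunen's family
$$
\GG = B\cup\{\omega\setminus x:x\in P\setminus B\}.
$$

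The main step is to verify that $(\varnothing,\GG)$ is a mixed independent family with $\GG$ infinite. Given finite $\tau\subseteq\GG$ and $w:\tau\longrightarrow 2$, each $y\in\tau$ has a canonical ``underlying'' element of $P$, namely $b$ itself if $y=b\in B$, or $a$ if $y=\omega\setminus a$ for some $a\in P\setminus B$. Because $B\cap(P\setminus B)=\varnothing$, distinct elements of $\tau$ yield distinct underlying elements of $P$, so $\bigcap_{y\in\tau}y^{w(y)}$ unfolds into a word in distinct elements of $P$, which is infinite by the independence of $P$. Since $|\GG|\geq|B|=\cccc$, Lemma~\ref{nonP} then produces a non-$\mathrm{P}$-point $\UU\supseteq\GG$.

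Finally, as in the proof of Theorem~\ref{closed}, $\omega\setminus x\in\UU$ for every $x\in P\setminus B$ forces $P\cap\UU=B$. Since $P$ is closed in $2^\omega$, the set $B=P\cap\UU$ is closed in $\UU$, has cardinality $\cccc$, and has no perfect subset by the Bernstein property. I do not expect any serious obstacle: the independence of $P$ immediately delivers the mixed-independent-family hypothesis, and everything else just assembles machinery already developed in the paper.
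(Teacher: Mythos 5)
Your proposal is correct and follows exactly the paper's route: take Kunen's family $\GG=B\cup\{\omega\setminus x:x\in P\setminus B\}$ for a Bernstein subset $B$ of the perfect independent family $P$, observe that $(\varnothing,\GG)$ is a mixed independent family, and apply Lemma~\ref{nonP}. The only difference is that you spell out the verification that $\GG$ is independent (via the distinct underlying elements of $P$), which the paper leaves implicit.
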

\begin{proof}
We will use the same notation as in the proof of Theorem~\ref{closed}.
Choose $C$ to be a Bernstein set in $2^\omega$, so that any ultrafilter extending $\GG$ will have a closed subset without the perfect property. Now simply apply Lemma $\ref{nonP}$ to $(\varnothing,\GG)$.
\end{proof}

\begin{theorem} Assume that $\mathrm{MA(countable)}$ holds. Then there exists a $\mathrm{P}$-point $\UU\subseteq 2^\omega$ such that $A\cap\UU$ has the perfect set property whenever $A$ is an analytic subset of $2^\omega$.
\end{theorem}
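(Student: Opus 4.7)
The proof combines the constructions from Theorem~\ref{analyticpsp} and Theorem~\ref{cbPpoint} by interleaving at even and odd successor stages of a transfinite recursion of length~$\cccc$. Enumerate as $\{(\TT_\eta,A_\eta,Q_\eta):\eta\in\cccc\}$ all $\mathrm{A}$-triples (with each triple listed cofinally often), enumerate all subsets of $\omega$ as $\{z_\eta:\eta\in\cccc\}$, and enumerate all countable collections of subsets of $\omega$ as $\{\CC_\eta:\eta\in\cccc\}$. We will construct $\FF_\xi$ for $\xi\in\cccc$ with $\FF_\mu\subseteq\FF_\eta$ for $\mu\leq\eta$, each $\FF_\xi$ having the finite intersection property and size less than $\cccc$, and in the end set $\UU=\bigcup_{\xi\in\cccc}\FF_\xi$.

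Start with $\FF_0=\FR$ and take unions at limit stages. At an even successor stage $\xi=3\eta+1$, handle the $\mathrm{A}$-triple $(\TT_\eta,A_\eta,Q_\eta)$ exactly as in the proof of Theorem~\ref{analyticpsp}: if $Q_\eta\nsubseteq\FF_{3\eta}$, just do nothing new beyond possibly adjoining some $z$ as below; otherwise apply Lemma~\ref{keypsp} to obtain a perfect $P\subseteq A_\eta$ such that $\FF_{3\eta}\cup\{\bigcap P\}$ has the finite intersection property and set $\FF_\xi=\FF_{3\eta}\cup\{\bigcap P\}$. At the next stage $\xi=3\eta+2$, handle the P-point requirement for $\CC_\eta$ exactly as in the proof of Theorem~\ref{cbPpoint}: if some $x\in\CC_\eta$ is such that $\FF_{3\eta+1}\cup\{\omega\setminus x\}$ has the finite intersection property, adjoin $\omega\setminus x$; otherwise $\CC_\eta\subseteq\langle\FF_{3\eta+1}\rangle$, and since $\mathrm{MA(countable)}$ implies $\dddd=\cccc$, Proposition~6.24 of \cite{blass} supplies a pseudointersection $x$ of $\CC_\eta$ such that $\FF_{3\eta+1}\cup\{x\}$ has the finite intersection property. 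At the final stage $\xi=3\eta+3$, decide membership of $z_\eta$: append $z_\eta^\varepsilon$ for whichever $\varepsilon\in 2$ keeps the finite intersection property (this is possible since otherwise the current family would not extend to an ultrafilter, contradicting its finite intersection property).

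The verification is a direct combination of the arguments already given for Theorems~\ref{analyticpsp} and~\ref{cbPpoint}. The stages of the form $3\eta+3$ guarantee that $\UU$ is an ultrafilter. The stages of the form $3\eta+2$ guarantee $\mathrm{P}$-pointness: given any countable $\CC\subseteq\UU$, by the regularity of $\cccc$ (since $\pppp=\cccc$ under $\mathrm{MA(countable)}$) we have $\CC\subseteq\FF_\mu$ for some $\mu<\cccc$, and at some stage $3\eta+2\geq\mu$ where $\CC_\eta=\CC$ a pseudointersection of $\CC$ is placed into $\UU$. The stages of the form $3\eta+1$ handle the perfect set property exactly as in the proof of Theorem~\ref{analyticpsp}: given analytic $A$ with $A\cap\UU$ uncountable, take an $\mathrm{A}$-triple $(\TT,A,Q)$ with $Q\subseteq A\cap\UU$, use $\cf(\cccc)>\omega$ to get $Q\subseteq\FF_\mu$ for some $\mu$, and then use the cofinal listing to find $\eta\geq\mu$ with $(\TT_\eta,A_\eta,Q_\eta)=(\TT,A,Q)$; at stage $3\eta+1$ a perfect subset of $A$ is added into $\UU$.

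The only point that requires a moment's thought is that the two kinds of steps do not interfere with one another. At a P-point step we only adjoin a single subset of $\omega$ to $\FF$, which neither destroys the finite intersection property nor removes any perfect set already placed in $\UU$; similarly, at a perfect-set step we only adjoin $\bigcap P$ (an element of $\omega$, up to the identification with $2^\omega$), which does not prevent later pseudointersections from being found, since the hypotheses of Lemma~\ref{keypsp} and of Proposition~6.24 of \cite{blass} depend only on the filter having size less than $\cccc$ and the finite intersection property, both of which persist throughout. The main technical point, as in the original proofs, is the verification that Lemma~\ref{keypsp} applies, but nothing new is needed since its hypotheses are preserved by the interleaved construction.
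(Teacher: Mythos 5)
Your proof is correct and follows essentially the same route as the paper: interleave the perfect-set-property stages of Theorem \ref{analyticpsp} with the pseudointersection stages of Theorem \ref{cbPpoint} (the paper uses two alternating successor stages, folding the $z_\eta$ decision into the odd stages, rather than your three, but this is cosmetic). Two small slips in your verification, neither fatal: $\mathrm{MA(countable)}$ does \emph{not} imply $\pppp=\cccc$ (the paper's own Cohen-model remark in Section 7 is a counterexample), but all you need is $\cf(\cccc)>\omega$, which holds in ZFC; and the $\mathrm{P}$-point requirement for a countable $\CC\subseteq\UU$ is met not by waiting for a stage occurring after $\CC\subseteq\FF_\mu$ (the unique $\eta$ with $\CC_\eta=\CC$ may come earlier), but by the dichotomy at that stage: either the complement of some member of $\CC$ enters the filter, contradicting $\CC\subseteq\UU$, or a pseudointersection of $\CC$ does.
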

\begin{proof}
Enumerate all countable collections of subsets of $\omega$ as $\{\CC_\eta:\eta\in\cccc\}$. The setup of the construction will be as in the proof of Theorem $\ref{analyticpsp}$, but we will do different things at even and odd successor stages.

Start by letting $\FF_0=\FR$. Take unions at limit stages. At a successor stage $\xi=2\eta+1$, assume that $\FF_{2\eta}$ is given, then take care of $(\TT_\eta,A_\eta,Q_\eta)$ and $z_\eta$ as in the proof of Theorem $\ref{analyticpsp}$. At a successor stage $\xi=2\eta+2$, assume that $\FF_{2\eta+1}$ is given, then take care of $\CC_\eta$ as in the proof of Theorem $\ref{cbPpoint}$.
\end{proof}

\begin{question}\label{analyticpspppoint} For a non-principal ultrafilter $\UU\subseteq 2^\omega$, is being a $\mathrm{P}$-point equivalent to $A\cap\UU$ having the perfect set property whenever $A$ is an analytic subset of $2^\omega$?
\end{question}

\noindent Observe that Lemma $\ref{pal2psp}$ might be viewed as a partial answer to Question $\ref{analyticpspppoint}$.

\bigskip

\noindent\textbf{Acknowledgements.} The first author thanks Jan van Mill for making him notice Proposition $\ref{many}$ and the fact that the topology of ultrafilters as subspaces of $2^\omega$ was `unexplored territory'. Together with Question 3.2 from \cite{hrusak}, that was the main motivation for this article. The first author also thanks Kostas Beros for suggesting the second proof of Lemma $\ref{perfectindependent}$ and helping to shape the present proof of Lemma $\ref{keycdh}$. Both authors thank Ken Kunen for kindly `donating' Theorem $\ref{closed}$ to them, and the anonymous referee for valuable comments on an earlier version of this paper.


\begin{thebibliography}{99}

\bibitem{anderson}\textsc{R.D. Anderson, D.W. Curtis, J. van Mill.} A fake topological Hilbert space. \emph{Trans. Amer. Math. Soc.} \textbf{272:1} (1982), 311--321.

\bibitem{baldwin}\textsc{S. Baldwin, R.E. Beaudoin.} Countable dense homogeneous spaces under Martin's axiom. \emph{Israel J. Math.} \textbf{65:2} (1989), 153--164.

\bibitem{bart}\textsc{T. Bartoszy\'{n}ski, H. Judah (J. Ihoda).} \emph{Set theory. On the structure of the real line.} A K Peters, Ltd., Wellesley, MA, 1995. xii+546 pp.

\bibitem{blass}\textsc{A. Blass.} Combinatorial cardinal characteristics of the continuum. (To appear in the \emph{Handbook of set theory} (ed. M. Foreman, M. Magidor, and A. Kanamori)). Available at 

\verb"http://www.math.lsa.umich.edu/~ablass/hbk.pdf".

\bibitem{vandouwen}\textsc{E.K. van Douwen.} A compact space with a measure that knows which sets are homeomorphic. \emph{Adv. in Math.} \textbf{52:1} (1984), 1--33.

\bibitem{engelking}\textsc{R. Engelking.} \emph{General topology.} Revised and completed edition. Sigma Series in Pure Mathematics, vol. 6. Heldermann Verlag, Berlin, 1989.

\bibitem{farah}\textsc{I. Farah, M. Hru\v{s}\'ak, C.A. Mart\'inez Ranero.} A countable dense homogeneous set of reals of size $\aleph_1$. \emph{Fund. Math.} \textbf{186:1} (2005), 71--77.

\bibitem{fitzpatrick1}\textsc{B. Fitzpatrick Jr., H.X. Zhou.} Countable dense homogeneity and the Baire property. \emph{Topology Appl.} \textbf{43:1} (1992), 1--14.

\bibitem{fitzpatrick2}\textsc{B. Fitzpatrick Jr., H.X. Zhou.} Some open problems in densely homogeneous spaces. \emph{Open problems in topology.} North-Holland, Amsterdam, 1990. 251--259.

\bibitem{hrusak}\textsc{M. Hru\v{s}\'ak, B. Zamora Avil\'es.} Countable dense homogeneity of definable spaces. \emph{Proc. Amer. Math. Soc.} \textbf{133:11} (2005), 3439--3425.

\bibitem{ihodashelah}\textsc{J. Ihoda (H. Judah), S. Shelah.} Martin's axioms, measurability and equiconsistency results. \emph{J. Symbolic Logic} \textbf{54:1} (1989), 78--94.

\bibitem{jech}\textsc{T. Jech.} \emph{Set theory.} The third millennium edition, revised and expanded. Springer Monographs in Mathematics. Springer-Verlag, Berlin, 2003.

\bibitem{kechris}\textsc{A.S. Kechris.} \emph{Classical descriptive set theory.} Graduate Texts in Mathematics, 156. Springer-Verlag, New York, 1995. xviii+402 pp.

\bibitem{kanamori}\textsc{A. Kanamori.} \emph{The higher infinite.} Second edition. Paperback reprint of the 2003 edition. Springer Monographs in Mathematics. Springer-Verlag, Berlin, 2009. xxii+536 pp.

\bibitem{kunen}\textsc{K. Kunen.} Weak $\mathrm{P}$-points in $\mathbb{N}^\ast$. \emph{Topology, Vol. II (Proc. Fourth Colloq., Budapest, 1978).} Colloq. Math. Soc. J\'anos Bolyai, 23. North-Holland, Amsterdam-New York, 1980. 741--749.

\bibitem{vanmill1}\textsc{J. van Mill.} \emph{The infinite-dimensional topology of function spaces.} North-Holland Mathematical Library, 64. North-Holland Publishing Co., Amsterdam, 2001. xii+630 pp.

\bibitem{vanmill2}\textsc{J. van Mill.} Analytic groups and pushing small sets apart. \emph{Trans. Amer. Math. Soc.} \textbf{361:10} (2009), 5417--5434.

\bibitem{vanmill3}\textsc{J. van Mill.} Strong local homogeneity does not imply countable dense homogeneity. \emph{Proc. Amer. Math. Soc.} \textbf{84:1} (1982), 143--148.

\bibitem{vanmill4}\textsc{J. van Mill.} Sierpi\'{n}ski's technique and subsets of $\RRR$. \emph{Topology Appl.} \textbf{44:1-3} (1992), 241-261.

\bibitem{oxtoby}\textsc{J.C. Oxtoby.} \emph{Measure and category.} Second edition. Graduate Texts in Mathematics, 2. Springer-Verlag, New York-Berlin, 1980. x+106 pp.

\end{thebibliography}
\end{document}